\newcommand{\mylabel}[2]{#2\def\@currentlabel{#2}\label{#1}}
\newcommand{\proofstep}[1]{%
  \par
  \addvspace{\medskipamount}
  \textit{#1\@addpunct{.}}\enspace\ignorespaces
}
\renewcommand{\d}{\ensuremath{\mathrm{d}}}
\newcommand{\EE}{\ensuremath{\mathbb{E}}}
\newcommand{\N}{\ensuremath{\mathbb{N}}}
\newcommand{\PP}{\ensuremath{\mathbb{P}}}
\newcommand{\R}{\ensuremath{\mathbb{R}}}
\newcommand{\TTT}{\ensuremath{\mathbb T}}
\newcommand{\Z}{\ensuremath{\mathbb{Z}}}
\def\R{\mathbb{R}}
\def\N{\mathbb{N}}
\newcommand{\TT}{\ensuremath{\mathcal T}}
\newcommand{\XX}{\ensuremath{\mathcal X}}
\newtheorem{theorem}{Theorem}[section]
\newtheorem{definition}[theorem]{Definition}
\newtheorem{proposition}[theorem]{Proposition}
\newtheorem{remark}[theorem]{Remark}
\newtheorem{lemma}[theorem]{Lemma}
\newtheorem{corollary}[theorem]{Corollary}
\numberwithin{equation}{section}
\newcommand{\cA}{{\mathcal A}}  
\newcommand{\cB}{{\mathcal B}}  
\newcommand{\cF}{{\mathcal F}}  
\newcommand{\cK}{{\mathcal K}}  
\newcommand{\cT}{{\mathcal T}}  
\newcommand{\cU}{{\mathcal U}}  
\newcommand{\cX}{{\mathcal X}}  
\DeclareMathOperator*{\esssup}{\mathrm{ess-sup}}
\newcommand{\dualpair}[2]{\ensuremath{\left\langle #1,#2 \right\rangle}}
\newcommand{\Dualpair}[2]{\ensuremath{\left\langle \kern-0.5ex \left\langle #1,#2 \right\rangle \kern-0.5ex \right\rangle}}
\newcommand{\floor}[1]{\ensuremath{\left\lfloor #1 \right\rfloor}}
\newcommand{\ceil}[1]{\ensuremath{\left\lceil #1 \right\rceil}}
\DeclareMathOperator{\ind}{\ensuremath{\mathbbm{1}}}
\DeclareMathOperator{\supp}{\ensuremath{\mathrm{supp}}}
\newcommand{\qvar}[2]{\ensuremath{\left\langle \kern-0.5ex \left\langle #1 \right\rangle \kern-0.5ex \right\rangle_{#2}}}
\newcommand{\eps}{\ensuremath{\varepsilon}}
\newcommand{\verti}[1]{\ensuremath{\left\lvert #1 \right\rvert}}
\newcommand{\vertii}[1]{\ensuremath{\left\lVert #1 \right\rVert}}
\newcommand{\vertiii}[1]{{\vertii{\kern-0.25ex\vertii{\kern-0.25ex\vertii{ #1
    }\kern-0.25ex}\kern-0.25ex}}}
\renewcommand{\d}{\ensuremath{{\rm d}}}
\begin{document}
%
\title[Martingale solutions to the stochastic thin-film equation]{The stochastic thin-film equation: existence of nonnegative martingale solutions}
\keywords{}
\subjclass[2010]{}
\thanks{BG appreciates discussions with G\"unther Gr\"un. MVG acknowledges discussions with Christian Kuehn, is obliged to Jonas Sauer for answering a question regarding interpolation of operators, and thanks Heidelberg University, the Max Planck Institute for Mathematics in the Sciences, and the Technical University of Munich for their kind hospitality. The authors thank Konstantinos Dareiotis, Alexandra Neamtu, and the anonymous reviewer for critical readings of the manuscript. BG acknowledges support by the Max Planck Society through the Max Planck Research Group \emph{Stochastic partial differential equations} and by the Deutsche Forschungsgemeinschaft (DFG, German Research Foundation) through the CRC 1283 \emph{Taming uncertainty and profiting from randomness and low regularity in analysis, stochastics and their applications}. MVG's work was partially funded by the DFG under project \# 334362478.}
\date{\today}
\author{Benjamin Gess}
\address[Benjamin Gess]{Max Planck Institute for Mathematics in the Sciences, Inselstr.~22, 04103 Leipzig, Germany and Faculty of Mathematics, Bielefeld University, Universit\"atsstr.~25, 33615 Bielefeld, Germany}
\email{b.gess@mis.mpg.de}
\author{Manuel V. Gnann}
\address[Manuel~V.~Gnann]{Delft Institute of Applied Mathematics, Faculty of Electrical Engineering, Mathematics and Computer Science, Delft University of Technology, Van Mourik Broekmanweg 6, 2628 XE Delft, Netherlands}
\email{M.V.Gnann@tudelft.nl}
\begin{abstract}
We consider the stochastic thin-film equation with colored Gaussian Stratonovich noise in one space dimension and establish the existence of nonnegative weak (martingale) solutions. The construction is based on a Trotter-Kato-type decomposition into a deterministic and a stochastic evolution, which yields an easy to implement numerical algorithm. Compared to previous work, no interface potential has to be included, the initial data and the solution can have de-wetted regions of positive measure, and the Trotter-Kato scheme allows for a simpler proof of existence than in case of It\^o noise.
\end{abstract}
\maketitle
\tableofcontents
%

\section{Introduction}

\subsection{Setting}
Consider the stochastic thin-film equation with quadratic mobility
\begin{equation}\label{stfe}
\d u = - \partial_x \left(u^2 \partial_x^3 u\right) \d t + \partial_x \left(u \circ \d W\right) \quad \mbox{for} \quad (t,x) \in [0,T) \times \TTT_L,
\end{equation}
where $T, L \in (0,\infty)$ and $\TTT_L$ denotes the torus of the interval $[0,L]$. We will always assume periodic boundary conditions
\[
\partial_x^j u(\cdot,0) = \partial_x^j u(\cdot,L) \quad \mbox{for} \quad j \in \{0,1,2,3\}
\]
without further mentioning it. Moreover, suppose that periodic nonnegative initial data $u_0 \colon \TTT_L \to [0,\infty)$ are given, satisfying certain regularity properties that we will specify below. Equation~\eqref{stfe} describes the evolution of the height $u$ of a two-dimensional viscous thin film as a function of time $t$ and lateral position $x$ influenced by thermal noise $W$ and assuming Navier slip at the substrate. The noise $W$ is assumed to be colored Gaussian and the symbol $u \circ \d W$ denotes Stratonovich noise. Equation \eqref{stfe} can be regarded as an approximate model to the full stochastic thin-film equation
\begin{equation}\label{stfe_full}
\d u = - \partial_x \left(\left(\ell_\mathrm{s} u^2 + u^3\right) \partial_x^3 u\right) \d t + \partial_x \left(\sqrt{\ell_\mathrm{s} u^2 + u^3} \circ \d W\right) \quad \mbox{for} \quad (t,x) \in [0,T) \times \TTT_L,
\end{equation}
where the constant $\ell_\mathrm{s} > 0$ denotes the slip length. Hence, \eqref{stfe} is an approximation of \eqref{stfe_full} for film heights $u$ that are much smaller than the slip length $\ell_\mathrm{s}$.

\medskip

In this paper we prove the existence of nonnegative martingale solutions to  \eqref{stfe} (cf.~Theorem~\ref{th:main} and Remark~\ref{rm:main} below) for initial data $u_0 \in H^1(\TTT_L)$ such that $u_0 \ge 0$. The construction is based on the following Trotter-Kato scheme
\begin{subequations}\label{scheme_intro}
\begin{equation}\label{deterministic_intro}
\partial_t v_N = - \partial_x \left(v_N^2 \partial_x^3 v_N\right) \quad \mbox{for} \quad (t,x) \in [(j-1) \delta,j \delta) \times \TTT_L
\end{equation}
and
\begin{equation}\label{stochastic_intro}
\d w_N = \partial_x \left(w_N \circ \d W\right) \quad \mbox{for} \quad (t,x) \in [(j-1) \delta,j \delta) \times \TTT_L,
\end{equation}
\end{subequations}
where $\delta := \frac{T}{N+1}$, $j \in \{1,\ldots,N+1\}$, and $N \in \N_0$, glueing together according to $v_N(0,\cdot) := u_0$, $w_N((j-1)\delta,\cdot) := \lim_{t \nearrow j \delta} v_N(t,\cdot)$ for $j \in \{1,\ldots,N\}$, and $v_N(j\delta,\cdot) :=
\lim_{t \nearrow j \delta} w_N(t,\cdot)$ for $j \in \{1,\ldots,N\}$, and taking the limit as $N \to \infty$. Before giving mathematical details, we will next discuss the choice of Stratonovich instead of It\^o noise in \eqref{stfe}, \eqref{stfe_full}, and \eqref{stochastic_intro}.

\subsection{It\^o versus Stratonovich formulation\label{sec:ito_strat}}
Two versions of the stochastic thin-film equation have been proposed independently. The first due to Davidovitch, Moro, and Stone \cite{DavidovitchMoroStone2005} is in line with the formulation \eqref{stfe} and has been applied to describe the enhanced spreading of droplets. The other ground-laying work by Gr\"un, Mecke, and Rauscher \cite{GruenMeckeRauscher2006} additionally takes an interface potential between fluid and substrate into account that prevents $u$ from becoming negative. The study in \cite{GruenMeckeRauscher2006} focuses on coarsening and de-wetting phenomena.

\medskip

The first rigorous construction of nonnegative martingale solutions to the stochastic thin-film equation with It\^o noise and additional interface potential, as derived in \cite{GruenMeckeRauscher2006}, has been recently given by Fischer and Gr\"un in \cite{FischerGruen2018}. A generalization to more general mobilities at the expense of introducing suitable nonlocal source terms has subsequently been introduced by Cornalba in \cite{Cornalba2018}. The inclusion of an additional interface potential is crucially used in these works in order to obtain suitable a-priori estimates.

\medskip

The starting point of the (informal) derivation of the stochastic thin-film equation in \cite{GruenMeckeRauscher2006} is the transport equation (see \cite[p.~1265, Eq.~(6)]{GruenMeckeRauscher2006})
\begin{equation}\label{transport_filmheight}
\partial_t u = v_y - v_x \, \partial_x u,
\end{equation}
where $v_x$ and $v_y$ denote the horizontal and vertical components of the fluid velocity, respectively. Since the fluid velocity is modelled as a solution to the \textit{stochastic} incompressible Navier-Stokes equation, it should be understood as a stochastic process. Therefore, the product in \eqref{transport_filmheight} needs to be understood in the sense of a stochastic integral. We next recall the (informal) derivation of \eqref{transport_filmheight} in order to justify the choice of stochastic integration (It\^o versus Stratonovich). Equation~\eqref{transport_filmheight} can be derived by considering the movement of fluid particles at the liquid-air interface with trajectories parametrized by $\left(x(t),u\left(t,x(t)\right)\right)$, where $x(t)$ denotes the lateral position as a function of time $t$. The change of the height of the fluid is given by the vertical component $v_y$ of the fluid velocity, that is, 
\begin{equation}\label{transport_filmheight_2}
\frac{\d}{\d t} u(t,x(t)) = v_y(x(t),u(t,x(t))).
\end{equation}
The lateral position of a fluid particle changes according to the horizontal component $v_x$ of the fluid velocity
\[
\dot x(t) = v_x(x(t),u(t,x(t))),
\]
which again should be understood as a stochastic equation. Informally, It\^o's formula dictates 
\begin{equation}
\label{transport_filmheight_3} \begin{split}
  \frac{\d}{\d t} u(t,x(t)) 
  &= (\partial_tu)(t,x(t)) + (\partial_x u)(t,x(t))\circ \dot x(t) \\
  &= (\partial_tu)(t,x(t)) + (\partial_x u)(t,x(t))\circ v_x(x(t),u(t,x(t))),
\end{split}
\end{equation}
which together with \eqref{transport_filmheight_2} yields \eqref{transport_filmheight}. If we were to use the It\^o interpretation in \eqref{transport_filmheight_3}, an appropriate It\^o correction term would appear. This indicates that the derivation of the stochastic thin-film equation in \cite{GruenMeckeRauscher2006} relies on Stratonovich calculus and that the resulting model, as well as the one of \cite{DavidovitchMoroStone2005}, is naturally formulated with Stratonovich noise. In \cite[Appendix~C]{GruenMeckeRauscher2006} it was then claimed that the specific choice of the stochastic calculus (It\^o versus Stratonovich) is immaterial, at least in the case of space-time white noise.

\medskip

In the present work we choose to consider the Stratonovich formulation of the thin-film equation due to two points: First, we prove that in Stratonovich formulation the construction of nonnegative martingale solutions is possible \emph{without} an additional interface potential and allowing for \emph{touch down} of solutions, thus relaxing the assumptions of \cite{FischerGruen2018,Cornalba2018}. Second, we show that the Stratonovich formulation allows for a simpler construction of solutions via a Trotter-Kato scheme. Notably, this scheme requires Stratonovich noise as only then the transport equation \eqref{stochastic_intro} is well-posed.

\subsection{Weak formulation and main result\label{sec:weak_form}}
Let
\begin{equation}\label{def_noise}
W(t,x) := \sum_{k \in \Z} \lambda_k \psi_k(x) \beta^k(t),
\end{equation}
where $(\lambda_k)_{k \in \Z}$ are real and nonnegative, define the family $(\psi_k)_{k \in \Z}$ through
\begin{equation}\label{basis}
\psi_k(x)
:= \sqrt{\frac{2}{L \left(1 + \left(\frac{2 \pi k}{L}\right)^2 + \left(\frac{2 \pi k}{L}\right)^4\right)}}
\begin{cases} \cos\left(\frac{2 \pi k}{L} x\right) & \mbox{for} \quad k > 0 \quad \mbox{and} \quad x \in [0,L], \\
\frac{1}{\sqrt 2} & \mbox{for} \quad k = 0 \quad \mbox{and} \quad x \in [0,L], \\
\sin\left(\frac{2 \pi k}{L} x\right) & \mbox{for} \quad k < 0 \quad \mbox{and} \quad x \in [0,L],
\end{cases}
\end{equation}
being an orthonormal basis of $H^2(\TTT_L)$ of eigenfunctions of the periodic Laplacian, and let $(\beta^k)_{k \in \Z}$ be a family of mutually independent standard real-valued $(\cF_t)$-Wiener processes on a complete filtered probability space $\left(\Omega,\cF,\left(\cF_t\right)_{t \in [0,T)},\PP\right)$, with a complete and right-continuous filtration $\left(\cF_t\right)_{t \in [0,T)}$. From \eqref{basis} it follows in particular
\begin{subequations}\label{der_psi_k}
\begin{equation}\label{der_psi_k_0}
\partial_x \psi_k = \frac{2 \pi k}{L} \psi_{-k} \quad \mbox{for} \quad k \in \Z,
\end{equation}
so that
\begin{equation}\label{der_psi_k_1}
\partial_x^2 \psi_k = - \frac{4 \pi^2 k^2}{L^2} \psi_k, \quad \partial_x^3 \psi_k = - \frac{8 \pi^3 k^3}{L^3} \psi_{-k}, \quad \partial_x^4 \psi_k = \frac{16 \pi^4 k^4}{L^4} \psi_k \quad \mbox{for} \quad k \in \Z.
\end{equation}
\end{subequations}
We will further assume the decay condition
\begin{equation}\label{cond_lambda}
\sum_{k \in \Z} \lambda_k^2 < \infty.
\end{equation}
This ensures that $W$ takes values in $H^2(\TTT_L)$. Condition~\eqref{cond_lambda} is the same as in \cite[p.~417, condition~(H4)]{FischerGruen2018}, taking into account that Fischer and Gr\"un choose an orthonormal basis of $L^2(\TTT_L)$. 

\medskip

Equation~\eqref{stfe} with noise $W$ as in \eqref{def_noise} may be rewritten using It\^o calculus as (see~\cite[\S3]{DareiotisGess2017} for an analogous case)
\begin{equation}\label{stfe_ito}
\d u = \partial_x \left(- \left(u^2 \partial_x^3 u\right) + \frac 1 2 \sum_{k \in \Z} \lambda_k^2 \psi_k \, \partial_x (\psi_k u)\right) \d t + \partial_x \left(\sum_{k \in \Z} \lambda_k \psi_k u \, \d\beta^k\right)
\end{equation}
and its weak formulation is given by
\begin{equation}\label{weak_ito}
\begin{aligned}
\d \left(u,\varphi\right)_2 &= \left(\int_{\{u(t,\cdot) > 0\}} u^2 (\partial_x^3 u) \, (\partial_x \varphi) \, \d x - \frac 1 2 \sum_{k \in \Z} \lambda_k^2 \left(\left(\psi_k \partial_x \left(\psi_k u\right)\right),\partial_x\varphi\right)_2\right) \d t \\
&\phantom{=} - \sum_{k \in \Z} \lambda_k \left(\psi_k u, \partial_x \varphi\right)_2 \d \beta^k,
\end{aligned}
\end{equation}
$\PP$-almost surely, for any $\varphi \in C^\infty(\TTT_L)$, where $\left(v,w\right)_2 := \int_0^L v(x)w(x) \, \d x$ for $v, w \in L^2(\TTT_L)$ denotes the inner product in $L^2(\TTT_L)$. Note that in the weak formulation, we only require the third derivative $\partial_x^3 u$ to exist on the positivity set $\{u > 0\}$ (cf.~Definition~\ref{def:martingale_solution}).

\medskip

We use the following notion of solutions:
\begin{definition}\label{def:martingale_solution}
Let $u_0 \in H^1(\TTT_L)$ be nonnegative. A triple, consisting of a filtered probability space
$
\left(\tilde\Omega,\tilde\cF,\begin{pmatrix} \tilde\cF_t \end{pmatrix}_{t \in [0,T)},\tilde\PP\right),
$
where $\left(\tilde\cF_t\right)_{t \in [0,T)}$ is a complete and right-continuous filtration, an $(\tilde \cF_t)$-adapted bounded continuous $H^1_\mathrm{w}(\TTT_L)$-valued process $\tilde u$ on $[0,T)$ such that the distributional derivative $\partial_x^3 \tilde u$ is $(\tilde \cF_t)$-adapted with $\partial_x^3 \tilde u \in L^2(\{\tilde u > r\})$ for any $r > 0$ and $\tilde u^2 (\partial_x^3 \tilde u) \in L^2(\{\tilde u > 0\})$, $\tilde\PP$-almost surely, as well as mutually independent standard real-valued $(\tilde \cF_t)$-Wiener processes $\tilde \beta^k$, is called a martingale solution of the stochastic thin-film equation \eqref{stfe} if its weak formulation
\begin{align*}
\left(\tilde u(t,\cdot),\varphi\right)_2 - \left(u_0,\varphi\right)_2 &= \int_0^t \int_{\left\{\tilde u(t',\cdot) > 0\right\}} \tilde u^2 \, (\partial_x^3 \tilde u) \, (\partial_x\varphi) \, \d x \, \d t' \\
&\phantom{=} - \frac 1 2 \sum_{k \in \Z} \lambda_k^2 \int_0^t \left(\psi_k \partial_x \left(\psi_k \tilde u(t',\cdot)\right),\partial_x \varphi\right)_2 \d t' \\
&\phantom{=} - \sum_{k \in \Z} \lambda_k \int_0^t \left(\psi_k \tilde u(t',\cdot),\partial_x \varphi\right)_2 \d\tilde\beta^k(t')
\end{align*}
is satisfied for every $\varphi \in C^\infty(\TTT_L)$ and $t \in [0,T)$, $\tilde\PP$-almost surely.
\end{definition}

The main result of this work is
\begin{theorem}[martingale solutions to the stochastic thin-film equation]\label{th:main}
Suppose that $u_0 \in H^1(\TTT_L)$ such that $u_0 \ge 0$. Then, in the sense of Definition~\ref{def:martingale_solution}, there exists a martingale solution
\[
\left([0,1],\tilde\cF,\begin{pmatrix} \tilde\cF_t \end{pmatrix}_{t \in [0,T)},\tilde\PP\right), \quad \tilde u, \quad \mbox{and} \quad \left(\tilde\beta_k\right)_{k \in \Z}
\]
to the stochastic thin-film equation \eqref{stfe} for which $\tilde u \ge 0$, $\PP$-almost surely, and for which the a-priori estimate
\[
\tilde\EE \esssup_{t \in [0,T)} \vertii{\tilde u(t,\cdot)}_{1,2}^p \le C \vertii{u_0}_{1,2}^p
\]
is satisfied for any $p \in [2,\infty)$, where $C < \infty$ is independent of $\tilde u$ and $u_0$.
\end{theorem}
The proof of the above result is given in Section \ref{sec:sol_stfe} below.

\medskip

We emphasize once more that compared to the previous works \cite{FischerGruen2018,Cornalba2018} we do not require an interface potential and that the occurrence of de-wetted regions with positive measure $\tilde\PP\verti{\left\{\tilde u(t,\cdot) = 0\right\}} > 0$ is included. This is due to the fact that the arguments of the present work only rely on controlling the energy $\frac 1 2 \int_0^L (\partial_x \tilde u(t,x))^2 \, \d x$ and not on controlling the entropy $\int_0^L \verti{\ln \tilde u(t,x)} \d x$ as in \cite{FischerGruen2018,Cornalba2018}.

\begin{remark}\label{rm:main}
Note that Theorem~\ref{th:main} easily translates to the case of random initial data $u_0$ satisfying
\[
u_0 \in L^q\left(\Omega,\cF_0,\PP;H^1(\TTT_L)\right) \quad \mbox{with} \quad u_0 \ge 0, \quad \mbox{$\PP$-almost surely},
\]
where $q \ge p$ is sufficiently large.
\end{remark}
%

\subsection{Decomposition of the dynamics}
The idea of the construction is to split the dynamics of \eqref{weak_ito} into a deterministic evolution and a stochastic evolution; a Trotter-Kato-type decomposition that has been utilized in many other solution approaches for SPDEs, too. See for instance the works of Bensoussan, Glowinsky, and R\u{a}\c{s}canu \cite{BensoussanGlowinskiRascanu1990} and Gyöngy and Krylov \cite{GK03} on the Zaka\"i equation or Govindan \cite{Govindan2015} for a mild-solution approach to semilinear stochastic evolution equations.

\medskip

To begin with, we split the time interval $[0,T)$ into pieces of length $\delta := \frac{T}{N+1}$, where $N \in \N_0$. Then we define
\begin{enumerate}
\item[\mylabel{item:deterministic}{(D)}] \emph{Deterministic dynamics}: On $[(j-1) \delta, j\delta)$ the function $v_N$ satisfies the evolution
\begin{subequations}\label{scheme}
\begin{equation}\label{deterministic}
\left(v_N(t,\cdot),\varphi\right)_2 - \left(v_N\left((j-1)\delta,\cdot\right),\varphi\right)_2 = \int_{(j-1)\delta}^t \int_{\{v_N(t',\cdot) > 0\}} v_N^2 \, (\partial_x^3 v_N) \, (\partial_x\varphi) \, \d x \, \d t'
\end{equation}
for $t \in [(j-1)\delta, j \delta)$, $\PP$-almost surely, where $j = 1,\ldots,N+1$ and $\varphi \in C^\infty\left(\TTT_L\right)$.
\item[\mylabel{item:stochastic}{(S)}] \emph{Stochastic dynamics}: On $[(j-1) \delta,j\delta)$ the function $w_N$ satisfies the evolution
\begin{equation}\label{stochastic}
\begin{aligned}
\left(w_N(t,\cdot),\varphi\right)_2 - \left(w_N\left((j-1)\delta,\cdot\right),\varphi\right)_2 &= - \frac 1 2 \sum_{k \in \Z} \lambda_k^2 \int_{(j-1) \delta}^t \left(\psi_k \partial_x (\psi_k w_N(t',\cdot)), \partial_x \varphi\right)_2 \d t' \\
&\phantom{=} - \sum_{k \in \Z} \lambda_k \int_{(j-1)\delta}^t \left(\psi_k w_N(t',\cdot), \partial_x\varphi\right)_2 \d\beta^k(t')
\end{aligned}
\end{equation}
for $t \in [(j-1)\delta, j \delta)$, $\PP$-almost surely, where $j = 1,\ldots,N+1$ and $\varphi \in C^\infty\left(\TTT_L\right)$.
\item[\mylabel{item:deterministic_stochastic}{(DS)}] \emph{Connecting deterministic and stochastic dynamics}: We use
\begin{equation}\label{connection}
v_N(0,\cdot) := u_0, \quad v_N\left(j\delta,\cdot\right) := \lim_{t \nearrow j \delta} w_N\left(t,\cdot\right), \quad \mbox{and} \quad w_N\left((j-1)\delta,\cdot\right) := \lim_{t \nearrow j \delta} v_N\left(t,\cdot\right),
\end{equation}
\end{subequations}
$\PP$-almost surely, where $j \in \{1,\ldots,N\}$.
\end{enumerate}
Notice that \eqref{deterministic} is the weak formulation of \eqref{deterministic_intro}, while \eqref{stochastic} is the weak formulation of \eqref{stochastic_intro}, i.e., with noise $W$ as in \eqref{def_noise},
\begin{equation}\label{stochastic_2}
\d w_N =  \frac 1 2 \sum_{k \in \Z} \lambda_k^2 \partial_x\left(\psi_k\partial_x(\psi_k w_N)\right) \d t + \sum_{k \in \Z} \lambda_k \partial_x (\psi_k w_N) \, \d\beta^k \quad \mbox{for} \quad t \in [(j-1)\delta,j\delta)
\end{equation}
and $j \in \{1,\ldots,N+1\}$. Since \eqref{deterministic_intro} and \eqref{stochastic_2} are in divergence form, they both automatically conserve mass $\int_0^L v_N(t,x) \, \d x$ or $\int_0^L w_N(t,x) \, \d x$, respectively.

\medskip

Note that the dynamics in \ref{item:deterministic} are purely deterministic, while the dynamics in \ref{item:stochastic} are purely stochastic, with \ref{item:deterministic_stochastic} connecting them. In this work we show that solutions to \ref{item:deterministic} and \ref{item:stochastic} exist and that as $N \to \infty$, the scheme \ref{item:deterministic}--\ref{item:stochastic}--\ref{item:deterministic_stochastic} converges to a martingale solution to \eqref{stfe}. 

\medskip

Note that the deterministic dynamics \ref{item:deterministic} are determined by the deterministic thin-film equation \eqref{deterministic_intro}, for which an existence theory of weak solutions due to Bernis and Friedman \cite{BernisFriedman1990} is available. This theory has been subsequently upgraded to entropy-weak solutions by Beretta, Bertsch, and Dal Passo in \cite{BerettaBertschDalPasso1995} and Bertozzi and Pugh in \cite{BertozziPugh1996} and to higher dimensions by Dal Passo, Garcke, and Gr\"un in \cite{DalPassoGarckeGruen1998} and by Gr\"un in \cite{Gruen2004}. The stochastic dynamics \ref{item:stochastic}, on the other hand, are determined by a transport equation, to which we will apply a viscous regularization and the variational approach in order to construct solutions. While the existence of variational solutions is well-known (e.g.~Krylov, Rozovski\u{\i} \cite{KR82} and Gerencs\'er, Gy\"{o}ngy, Krylov \cite{GGK15}), we recall some details on the proof in order to keep track on the dependency of the constants on the time step, as needed for the proof of convergence of the Trotter-Kato scheme.  By construction, the scheme will preserve nonnegativity of solutions as long as we start with nonnegative and sufficiently regular initial data $u_0$, since this is known to be true for weak solutions to the deterministic thin-film equation \eqref{deterministic_intro} (cf.~\cite[Theorem~4.1]{BernisFriedman1990}), while \eqref{stochastic_2} is a transport equation for which this assertion is not difficult to prove (cf.~Proposition~\ref{prop:stochastic_entropy} below). Note, however, that the additional drift term in \eqref{stochastic} is crucial in order to allow for the construction of solutions and that the dynamics \ref{item:stochastic} without this additional drift term are in fact not well-defined.

\subsection{Outline}
In \S\ref{sec:deterministic}--\ref{sec:time_reg_approx} we prove that nonnegative solutions to the splitting scheme \ref{item:deterministic}--\ref{item:stochastic}--\ref{item:deterministic_stochastic} exist such that certain bounds and regularity properties are satisfied. More precisely, in \S\ref{sec:deterministic} we derive that solutions to the deterministic thin-film dynamics \ref{item:deterministic} (cf.~Theorem~\ref{th:tfe_weak} below and \cite{BernisFriedman1990,BerettaBertschDalPasso1995,BertozziPugh1996}) satisfy suitable bounds on the surface energy $\int_0^L (\partial_x v_N)^2 \, \d x$ (cf.~Corollary~\ref{cor:energy_tfe} below). In \S\ref{sec:stochastic} and Appendix~\ref{sec:viscous} we move on to the stochastic dynamics \ref{item:stochastic} and prove that solutions exist by the vanishing viscosity method employing the variational approach (cf.~Proposition~\ref{prop:var} and Proposition~\ref{prop:weak} below). The solution satisfies a bound on the expected surface energy $\EE \int_0^L (\partial_x w_N)^2 \, \d x$ with suitable constants and we further prove that $w_N$ is, $\PP$-almost surely, nonnegative (cf.~Proposition~\ref{prop:stochastic_entropy} below). In \S\ref{sec:time_reg_approx} we summarize the results for the concatenated solution $u_N$ fulfilling \ref{item:deterministic}--\ref{item:stochastic}--\ref{item:deterministic_stochastic} (cf.~Proposition~\ref{prop:reg_vn_wn} below) and prove additional regularity in time by cross interpolation (cf.~Proposition~\ref{prop:reg_time_un} and Appendix~\ref{sec:interpolation} below).

\medskip

In \S\ref{sec:sol_stfe} we construct solutions to the original equation \eqref{stfe}. The compactness argument in \S\ref{sec:tight} is based on a generalization of Skorokhod's representation theorem due to Jakubowski (cf.~Theorem~\ref{th:jakubowski} below and \cite[Theorem~2]{Jakubowski1997}) by proving tightness in suitable spaces (cf.~Proposition~\ref{prop:point_convergence} below). The rest of \S\ref{sec:tight} is devoted to the identification of the limits of the convergent subsequences (cf.~Propositions~\ref{prop:point_convergence}, \ref{prop:brownian}, and \ref{prop:identify_limit} below). In \S\ref{sec:limit} we subsequently recover the stochastic thin-film equation \eqref{stfe} in the sense of Definition~\ref{def:martingale_solution}, leading to the main result, formulated in Theorem~\ref{th:main}, in which nonnegative martingale solutions are obtained.

\medskip

The paper is completed in \S\ref{sec:conclude} with concluding remarks on possible future directions.

\subsection{Notation and conventions}
%
\subsubsection*{Sets}
We write $\N := \{1,2,3,\ldots\}$ for positive integers and $\N_0 := \N \cup \{0\}$. The set $\TTT_L$ denotes the torus of the interval $[0,L]$, where $L > 0$. For sets $X$ and $K$ we write $K \Subset X$ if $K$ is a subset of $X$ ($K \subseteq X$) and $K$ is compact. We write $\ind_A$ for the indicator function of a set $A \subseteq X$.

\subsubsection*{Lebesgue spaces}
We denote by $L^p(\Omega,\cA,\mu;X)$ the Lebesgue spaces with $p \in [1,\infty]$ of functions $\Omega \to X$, where $\Omega$ is a set, $\cA$ is a $\sigma$-algebra on $\Omega$, $\mu \colon \cA \to [0,\infty]$ is a measure, and $X$ denotes a Banach space. In case that $\cA$ denotes the Borel-$\sigma$-algebra and $\mu$ is the Lebesgue measure, we simply write $L^p(\Omega;X)$, and if $X = \R$, we write $L^p(\Omega)$. We write $\left(u,v\right)_2 := \int_0^L u \, v \, \d x$ and $\vertii{u}_2 := \sqrt{(u,u)_2}$ for the inner product and norm, where $u, v \in L^2(\TTT_L)$.

\subsubsection*{H\"older spaces and spaces of bounded continuous functions}
For $\Omega \subseteq \R^d$ with $\partial\Omega \in C^\infty$, the space $C^{k+\alpha}(\Omega;X)$ is the space of $k$-times differentiable functions $\Omega \to X$, where $k \in \N_0$, whose $k$-th derivatives are H\"older continuous with exponent $\alpha \in (0,1)$ on compact subsets of $\Omega$. For $k \in \N$ we write $C^{k-}(\Omega;X)$ for the space of $(k-1)$-times differentiable functions $\Omega \to X$ whose $(k-1)$-th derivatives are Lipschitz continuous. We write $BC^0(\Omega;X)$ for the space of bounded continuous functions $\Omega \to X$.

\subsubsection*{Sobolev(-Slobodeckij) spaces}
Suppose that $\Omega \subseteq \R^d$ with $\partial\Omega \in C^\infty$, $s \in [0,\infty)$, $p \in [1,\infty]$, and let $X$ be a Banach space. For a locally integrable function $u \colon \Omega \to X$ we define
\[
\vertii{u}_{W^{s,p}(\Omega;X)} := \left(\sum_{\alpha \in \N_0^d, \, 0 \le \verti{\alpha} \le s} \int_\Omega \vertii{\partial^\alpha u}_X^p \, \d x\right)^{\frac 1 p} \quad \mbox{for} \quad p \in [1,\infty) \quad \mbox{and} \quad s \in \N_0,
\]
and $\vertii{u}_{W^{s,p}(\Omega;X)} := \vertii{u}_{W^{\floor{s},p}(\Omega;X)} + \left[u\right]_{W^{s,p}(\Omega;X)}$ for $s \in (0,\infty) \setminus \N$, where
\[
\left[u\right]_{W^{s,p}(\Omega;X)} := \left(\sum_{\alpha \in \N_0^d, \, \verti{\alpha} = \floor{s}} \int_\Omega \int_\Omega \frac{\vertii{\partial^\alpha u(x) - \partial^\alpha u(y)}_X^p}{\verti{x-y}^{(s-\floor{s})p+d}} \, \d x \, \d y\right)^{\frac 1 p} \quad \mbox{for} \quad p \in [1,\infty),
\]
with the usual modifications for $p = \infty$. Then, the Sobolev-Slobodeckij space $W^{s,p}(\Omega;X)$ is the space of all locally integrable $u \colon \Omega \to X$ such that $\vertii{u}_{W^{s,p}(\Omega;X)} < \infty$. If $X = \R$, we simply write $W^{s,p}(\Omega)$. The space $W^{s,p}_0(\Omega;X)$ is defined as the closure of $C^\infty_\mathrm{c}\left(\mathring{\Omega};X\right)$ in $W^{s,p}(\Omega;X)$. The space $W^{s,p}(\Omega;X)$ for $s < 0$ and $1 \le p < \infty$ is defined as the dual of $W^{-s,p'}_0(\Omega;X')$, where $\frac{1}{p} + \frac{1}{p'} = 1$ and $X$ is reflexive.

\subsubsection*{Besov spaces}
Assuming that $\Omega \subseteq \R^d$ with $\partial\Omega \in C^\infty$, $s \in \R\setminus \Z$, and $p,q \in [1,\infty)$, we introduce the Besov space $B^{s,p}_q(\Omega;X) := \left(W^{\floor{s},p}(\Omega;X),W^{\ceil{s},p}(\Omega;X)\right)_{\kappa,q}$, where $\kappa = s - \floor{s}$ and $(\cdot,\cdot)_{\kappa,q}$ denotes the real interpolation functor. For $s \in \Z$ we define the Besov space $B^{s,p}_q(\Omega;X) := \left(W^{s-1,p}(\Omega;X),W^{s+1,p}(\Omega;X)\right)_{\frac 1 2,q}$. For an introduction to complex and real interpolation of operators, we refer to \cite[\S3, \S4]{BergLofstrom1976} or \cite[\S1]{Triebel1978}, while Besov spaces with values in a Banach space are discussed for instance in \cite{Amann2000} and \cite[Chapter~VII, \S2]{Amann2019}.

\subsubsection*{Periodic Bessel-potential spaces}
For $s \in [0,\infty)$ and $p \in (1,\infty)$ we define $H^{s,p}(\TTT_L)$ as the space of locally integrable $u \colon \TTT_L \to \R$ such that $\vertii{u}_{s,p} < \infty$, where for $p \ne 2$ we use
\[
\vertii{u}_{s,p} := \left(\sum_{k \in \Z} (1+k^2)^{\frac{s p}{2}} \verti{\hat u(k)}^p\right)^{\frac 1 p}, \quad \mbox{where} \quad \hat u(k) := \frac{1}{\sqrt L} \int_0^L e^{\frac{2 \pi i k}{L} x} \, u(x) \, \d x
\]
and for $p = 2$ we write $H^s(\TTT_L) := H^{s,2}(\TTT_L)$ and define the inner products and norms by
\begin{align*}
(u,v)_{s,2} &:= \sum_{j = 0}^s \int_0^L (\partial_x^j u) \, (\partial_x^j v) \, \d x \quad \mbox{for} \quad s \in \N_0, \\
(u.v)_{s,2} &:= \sum_{k \in \Z} (1+k^2)^s \, (\hat u(k))^* \, \hat v(k) \quad \mbox{for} \quad s \in (0,\infty) \setminus \N, \quad \hat u(k) := \frac{1}{\sqrt L} \int_0^L e^{\frac{2 \pi i k}{L} x} \, u(x) \, \d x,
\end{align*}
and $\vertii{u}_{s,2} := \sqrt{(u,u)_{s,2}}$, where $u, v \in H^s(\TTT_L)$. We write $\dot H^1(\TTT_L)$ for the homogeneous Sobolev space of all locally integrable $u \colon \Omega \to \R$ with norm $\vertii{\partial_x u}_2 < \infty$, where we identify $u, v \in \dot H^1(\TTT_L)$, if $u-v$ is a constant. The space $H^{-s,p}(\TTT_L)$ is defined as the dual of $H^{s,p'}(\TTT_L)$, where $\frac{1}{p} + \frac{1}{p'} = 1$. We write $\dualpair \cdot \cdot$ or $\Dualpair \cdot \cdot$ for the dual pairing of $H^{-1}(\TTT_L)$ with $H^1(\TTT_L)$ in $L^2(\TTT_L)$ or $L^2(\TTT_L)$ with $H^2(\TTT_L)$ in $H^1(\TTT_L)$, respectively. We write $H^1_\mathrm{w}(\TTT_L)$ for the space $H^1(\TTT_L)$ endowed with the weak topology induced by $\vertii{\cdot}_{1,2}$.

\subsubsection*{Periodic Besov spaces}
For $s \in \R \setminus \Z$, $p \in (1,\infty)$, and $q \in [1,\infty)$, we define periodic Besov spaces by real interpolation as $B^{s,p}_q(\TTT_L) := \left(H^{\floor{s},p}(\TTT_L),H^{\ceil{s},p}(\TTT_L)\right)_{\kappa,q}$, where $\kappa := s - \floor{s}$. For $s \in \Z$ we set $B^{s,p}_q(\TTT_L) := \left(H^{s+1,p}(\TTT_L),H^{s-1,p}(\TTT_L)\right)_{\frac 1 2,q}$. Periodic Besov spaces are investigated in detail in \cite[\S3]{SchmeisserTriebel1987}.

\subsubsection*{Hilbert-Schmidt operators}
We denote by $L_2(U;H)$ the space of Hilbert-Schmidt operators $U \to H$, where $U$ and $H$ are separable Hilbert spaces, i.e., the space of bounded linear operators $B \colon U \to H$ with finite norm $\vertii{B}_{L_2(U;H)} := \sqrt{\sum_{k \in \N} \vertii{B e_k}_H^2}$, where $(e_k)_{k \in \N}$ denotes any orthonormal basis of $U$.

\subsubsection*{Probability spaces}
We write $\EE$ or $\tilde \EE$ for the expectation with respect to a probability space $\left(\Omega,\cF,\PP\right)$ or $\left(\tilde\Omega,\tilde\cF,\tilde\PP\right)$, respectively. The symbol $\qvar{\cdot}{t}$ denotes the quadratic variation process. For probability spaces $\left(\Omega,\cF,\PP\right)$ and $\left(\tilde\Omega,\tilde\cF,\tilde\PP\right)$, and a topological space $(\cX,\cT)$, suppose we are given random variables $X \colon \Omega \to \cX$ and $\tilde X \colon \tilde\Omega \to \cX$. Then we write $X \sim \tilde X$ and say that the laws of $X$ and $\tilde X$ coincide if $\PP\{X \in \cU\} = \tilde\PP\{\tilde X \in \cU\}$ for every $\cU \in \cT$.

\subsubsection*{Constants}
In what follows, $c$, $C$, $c_j$, and $C_j$ will denote generic positive and finite constants and if deemed necessary, their (in-)dependence on parameters or functions is specified.

\section{Deterministic dynamics\label{sec:deterministic}}
Consider the deterministic thin-film dynamics \eqref{deterministic_intro}, i.e.,
\begin{equation}\label{deterministic_3}
\partial_t v = - \partial_x \left(v^2 \partial_x^3 v\right) \quad \mbox{on} \quad [0,\delta).
\end{equation}
We use the existence and regularity results on solutions to \eqref{deterministic_3} developed in \cite{BerettaBertschDalPasso1995,BertozziPugh1996} as the proof of non-negativity therein does not require the use of the entropy as in \cite{BernisFriedman1990}. Note that Beretta, Bertsch, and Dal Passo in \cite{BernisFriedman1990} consider solutions to \eqref{deterministic_3} on the interval $[0,L]$ but with homogeneous (Neumann) data
\[
\partial_x v(\cdot,0) = \partial_x v(\cdot,L) = 0 \quad \mbox{and} \quad (v^n\partial_x^3 v)(\cdot,0) = (v^n\partial_x^3 v)(\cdot,L) = 0,
\]
though the construction of solutions on the torus $\TTT_L$ works in the same manner. The following statements form a summary of those in \cite[Proposition~1.1]{BerettaBertschDalPasso1995} and \cite[Theorem~2.1, Proposition~4.6, Proposition~4.8]{BertozziPugh1996}.

\begin{theorem}[Beretta, Bertsch, Dal Passo \cite{BerettaBertschDalPasso1995}, Bertozzi and Pugh \cite{BertozziPugh1996}]\label{th:tfe_weak}
Assume that $v_0 \in H^1(\TTT_L)$ with $v_0 \ge 0$. Then, there exists a function $v \colon [0,\delta) \times \TTT_L \to [0,\infty)$ with the following properties:
\begin{enumerate}
\item\label{item:bf1} $v \in C^{\frac 1 8, \frac 1 2}\left([0,\delta] \times \TTT_L\right)$ (mixed H\"older continuity with exponent $\frac 1 8$ in time and $\frac 1 2$ in space).
\item Initial value: $v(0,\cdot) = v_0$ in the sense that $\vertii{v(t,\cdot)-v_0}_{1,2} \to 0$ as $t \searrow 0$.
\item $v \in L^\infty\left([0,\delta);H^1(\TTT_L)\right)$.
\item $v^2 \partial_x^3 v \in L^2\left(\{v > 0\}\right)$.
\item\label{item:mass_conserved} Mass conservation: $\int_0^L v \, \d x = \int_0^L v_0 \, \d x$ on the time interval $[0,\delta)$.
\item\label{item:bf7} The function $v$ satisfies
\begin{equation}\label{test_tfe}
\int_0^\delta \int_0^L v \, (\partial_t \phi) \, \d x \, \d t + \int_0^\delta \int_{\{v(t,\cdot) > 0\}} v^2 \, (\partial_x^3 v) \, (\partial_x \phi) \, \d x \, \d t = 0
\end{equation}
for all $\phi \in C_\mathrm{c}^\infty\left((0,\delta);C^\infty(\TTT_L)\right)$.
\end{enumerate}
\end{theorem}

\medskip

In addition to mass conservation, we also need a quantitative energy estimate, which essentially follows from the construction of \cite{BernisFriedman1990,BerettaBertschDalPasso1995,BertozziPugh1996}:

\begin{corollary}[quantitative estimate]\label{cor:energy_tfe}
In the situation of Theorem~\ref{th:tfe_weak} there exists a solution $v \colon [0,\delta) \times \TTT_L \to [0,\infty)$ satisfying the properties \eqref{item:bf1}--\eqref{item:bf7} and further
\begin{equation}\label{apriori_bf}
\vertii{\partial_x v(t,\cdot)}_2^p + 2 \int_0^t \vertii{\partial_x v(t',\cdot)}_2^{p-2} \int_{\{v(t',\cdot) > 0\}} (v(t',x))^2 (\partial_x^3 v(t',x))^2 \, \d x \, \d t' \le \vertii{\partial_x v_0}_2^p
\end{equation}
for $t \in [0,\delta)$, where $p \in [2,\infty)$ is arbitrary.
\end{corollary}
\begin{proof}[Proof of Corollary~\ref{cor:energy_tfe}]
Denote by $v^\eps$ unique classical solutions to the approximating problems
\[
\partial_t v^\eps + \partial_x \left(f_\eps\left(v^\eps\right) \partial_x^3 v^\eps\right) = 0 \quad \mbox{in} \quad [0,\delta) \times \TTT_L,
\]
with $f_\eps(s) = \frac{s^4}{\eps + s^2}$ and initial data $v_0^\eps \in C^\infty(\TTT_L)$ such that $v_0^\eps > 0$ and $\vertii{v_0 - v_0^\eps}_{1,2} \to 0$ as $\eps \searrow 0$ (cf.~\cite[\S1]{BerettaBertschDalPasso1995} for details). From \cite[Eq.~(1.8)]{BerettaBertschDalPasso1995} we infer that
\begin{equation}\label{apriori_bf_eps}
\vertii{\partial_x v^\eps(t,\cdot)}_2^2 + 2 \int_0^t \int_{\{v^\eps(t',\cdot) > 0\}} f_\eps(v^\eps) (\partial_x^3 v^\eps)^2 \, \d x \, \d t' = \vertii{\partial_x v_0^\eps}_2^2 \quad \mbox{for all} \quad t \in [0,\delta)
\end{equation}
holds true. Since as $\eps \searrow 0$ a subsequence of $v^\eps$ uniformly converges to $v$ of Theorem~\ref{th:tfe_weak} (cf.~\cite[(1.13)]{BerettaBertschDalPasso1995}), for any $r > 0$ and $\eps > 0$ sufficiently small such that $\sup_{(t,x) \in [0,\delta) \times \TTT_L} \verti{v^\eps(t,x) - v(t,x)} \le \frac r 2$, we have
\[
\int_0^\delta \int_{\{v(t,\cdot) > r\}} (\partial_x^3 v^\eps)^2 \, \d x \, \d t \le \frac{1}{f_\eps(r/2)} \int_0^\delta \int_{\{v(t,\cdot) > r\}} f_\eps(v^\eps) (\partial_x^3 v^\eps)^2 \, \d x \, \d t \stackrel{\eqref{apriori_bf_eps}}{\le} \frac{8 \eps + 2 r^2}{r^4} \vertii{\partial_x v_0}_2^2.
\]
A diagonal sequence argument implies that, up to taking another subsequence, we have for some $\zeta \in L^2(\TTT_L)$
\[
\int_0^\delta \int_{\{v(t,\cdot) > r\}} \eta \, \partial_x^3 v^\eps \, \d x \, \d t \to \int_0^\delta \int_{\{v(t,\cdot) > r\}} \eta \, \zeta \, \d x \, \d t \quad \mbox{as} \quad \eps \searrow 0
\]
for any $r > 0$ and any $\eta \in C^\infty_\mathrm{c}\left(\{v > r\}\right)$. On the other hand, through integration by parts and bounded convergence
\[
\int_0^\delta \int_{\{v(t,\cdot) > r\}} \eta \, (\partial_x^3 v^\eps) \, \d x \, \d t = - \int_0^\delta \int_{\{v(t,\cdot) > r\}} (\partial_x^3 \eta) \, v^\eps \, \d x \, \d t \to - \int_0^\delta \int_{\{v(t,\cdot) > r\}} (\partial_x^3 \eta) \, v \, \d x \, \d t,
\]
as $\eps \searrow 0$, i.e., $\zeta = \partial_x^3 v$. From \eqref{apriori_bf_eps} we deduce that, up to taking another subsequence, also estimate \eqref{apriori_bf} is valid for $p = 2$ by weak lower-semicontinuity using $\partial_x^3 v^\eps \rightharpoonup \partial_x^3 v$ in $L^2\left(\{v > 0\}\right)$ and $\sup_{(t,x) \in [0,\delta) \times \TTT_L} \verti{v^\eps(t,x) - v(t,x)} \to 0$ as $\eps \searrow 0$. Estimate~\eqref{apriori_bf} for $p \in [2,\infty)$ follows from the one for $p = 2$ by noting that
\begin{eqnarray*}
\lefteqn{\vertii{\partial_x v(t,\cdot)}_2^p + \int_0^t \vertii{\partial_x v(t',\cdot)}_2^{p-2} \int_{\{v(t',\cdot) > 0\}} (v(t',x))^2 (\partial_x^3 v(t',x))^2 \, \d x \, \d t'} \\
&\le& \sup_{t' \in [0,t]} \vertii{\partial_x v(t',\cdot)}_2^{p-2} \left(\vertii{\partial_x v(t,\cdot)}_2^2 + \int_0^t \int_{\{v(t',\cdot) > 0\}} (v(t',x))^2 (\partial_x^3 v(t',x))^2 \, \d x \, \d t'\right) \\
&\le& \vertii{\partial_x v_0}_2^{p-2} \left(\vertii{\partial_x v(t,\cdot)}_2^2 + \int_0^t \int_{\{v(t',\cdot) > 0\}} (v(t',x))^2 (\partial_x^3 v(t',x))^2 \, \d x \, \d t'\right) \le \vertii{\partial_x v_0}_2^p.
\end{eqnarray*}
\end{proof}
%

\section{Stochastic dynamics\label{sec:stochastic}}
Denote by $\left(\Omega,\cF,\left(\cF_t\right)_{t \in [0,\delta]},\PP\right)$ a complete filtered probability space such that the filtration $\left(\cF_t\right)_{t \in [0,\delta]}$ is complete and right-continuous. Further denote by $\left(\beta^k\right)_{k \in \Z}$ mutually independent standard real-valued $(\cF_t)$-Wiener processes. Our aim is to construct weak solutions to equation~\eqref{stochastic_2}, i.e.,
\begin{equation}\label{stochastic_3}
\d w =  \frac 1 2 \sum_{k \in \Z} \lambda_k^2 \partial_x\left(\psi_k\partial_x(\psi_k w)\right) \d t + \sum_{k \in \Z} \lambda_k \partial_x (\psi_k w) \, \d\beta^k \quad \mbox{on} \quad [0,\delta),
\end{equation}
satisfying suitable bounds. The material leading to Proposition~\ref{prop:weak} is standard (see for instance \cite{KR82,GGK15}) and given in Appendix~\ref{sec:viscous}. There, we present some additional details in order to track the dependency of the occurring constants on the time step, which will be needed below.

\medskip

We introduce the operator
\begin{equation}\label{a_op_0}
A^0 \colon H^1(\TTT_L) \to H^{-1}(\TTT_L), \quad w \mapsto \frac 1 2 \sum_{k \in \Z} \lambda_k^2 \partial_x\left(\psi_k\partial_x(\psi_k w)\right)
\end{equation}
and the diagonal Hilbert-Schmidt-valued operator
\begin{equation}\label{b_op_0}
B^0 \colon H^1(\TTT_L) \to L_2\left(H^2(\TTT_L);L^2(\TTT_L)\right), \quad w \mapsto \left(v \mapsto \sum_{k \in \Z} \lambda_k \left(v,\psi_k\right)_{2,2} \left(\partial_x (\psi_k w)\right)\right).
\end{equation}
Equation~\eqref{stochastic_3} now attains the abstract form
\begin{equation}\label{stochastic_3_abstract}
\partial_t w = A^0 w + \left(B^0 w\right) \d W_{H^2(\TTT_L)}, \quad \mbox{where} \quad W_{H^2(\TTT_L)} := \sum_{k \in \Z} \beta^k \psi_k.
\end{equation} 
Note that $W_{H^2(\TTT_L)}$ is a cylindrical $(\cF_t)$-Wiener process in $H^2(\TTT_L)$ with $(B^0 w) \d W_{H^2(\TTT_L)} = \partial_x (w \circ \d W)$ for any $w \in H^1(\TTT_L)$, where $W$ is as in \eqref{def_noise}. We introduce the concept of weak solutions to \eqref{stochastic_3_abstract}:
\begin{definition}\label{def:sol_eps_0}
A weak solution to \eqref{stochastic_3_abstract} is a continuous $(\cF_t)$-adapted $L^2(\TTT_L)$-valued process $w$ such that its $\d t \otimes \d\PP$-equivalence class $\hat w$ meets
\[
\hat w \in L^2\left([0,\delta) \times \Omega, \d t \otimes \d\PP;H^1(\TTT_L)\right)
\]
and $\PP$-almost surely
\begin{equation}\label{solution_formula}
w(t,\cdot) = w_0 + \int_0^t A^0 \bar w(t',\cdot) \, \d t' + \int_0^t \left(B^0 \bar w(t',\cdot)\right) \d W_{H^2(\TTT_L)}(t',\cdot) \quad \mbox{for} \quad t \in [0,\delta),
\end{equation}
where $\bar w$ denotes any $H^1(\TTT_L)$-valued progressively measurable $\d t \otimes \d\PP$-version of $\hat w$.
\end{definition}

With help of Proposition~\ref{prop:var} we can show:
\begin{proposition}\label{prop:weak}
Suppose that $p \in [2,\infty)$ and let \eqref{cond_lambda} hold true. Then, for any
\[
w_0 \in L^p\left(\Omega,\cF_0,\PP;H^1(\TTT_L)\right)
\]
there exists a solution $w$ of \eqref{stochastic_3} with initial data $w_0$ satisfying the a-priori estimates
\begin{subequations}\label{apriori_w}
\begin{align}
\EE \esssup_{t \in [0,\delta)} \vertii{w(t,\cdot)}_{1,2}^p &\le C_1 \, \EE\vertii{w_0}_{1,2}^p, \label{apriori_w_1} \\
\limsup_{t \nearrow \delta} \EE \vertii{\partial_x w(t,\cdot)}_2^p &\le e^{C_2 \delta} \left(\EE \vertii{\partial_x w_0}_2^p + C_3 \, \delta \, \EE \verti{\int_0^L w_0 \, \d x}^p\right), \label{apriori_w_2}
\end{align}
\end{subequations}
where $C_1, C_2, C_3 < \infty$ are independent of $\delta$, $w$, and $w_0$. Furthermore, the mass is conserved, i.e., $\int_0^L w(t,\cdot) \, \d x = \int_0^L w_0 \, \d x$ holds true for $t \in [0,\delta)$, $\PP$-almost surely.
\end{proposition}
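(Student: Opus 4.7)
The proof proceeds by vanishing viscosity and exploitation of the Stratonovich--It\^o cancellation built into the pair $(A^0, B^0)$. For $\eps > 0$, consider the parabolic regularization
\[
\partial_t w^\eps = \left(A^0 + \eps \partial_x^2\right) w^\eps \, \d t + \left(B^0 w^\eps\right) \d W_{H^2(\TTT_L)},
\]
for which Proposition~\ref{prop:var} supplies a variational solution $w^\eps$ on $[0,\delta) \times \TTT_L$ with $w^\eps(0,\cdot) = w_0$. The remainder of the argument has three aims: (i) derive versions of \eqref{apriori_w} that are uniform in $\eps$; (ii) pass to the limit $\eps \searrow 0$ to obtain $w$; and (iii) verify mass conservation.

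For step~(i), the key structural observation is that, writing $L_k w := \partial_x(\psi_k w)$, one has $A^0 = \tfrac{1}{2}\sum_k \lambda_k^2 L_k^2$ while the quadratic variation of the stochastic integral is $\sum_k \lambda_k^2 \|L_k w^\eps\|_2^2 \, \d t$. Applying It\^o's formula to $\|w^\eps\|_2^2$ and invoking the commutator identity $L_k + L_k^* = (\partial_x \psi_k)$ (a bounded multiplication operator), the combination $2(A^0 w^\eps, w^\eps)_2 + \sum_k \lambda_k^2 \|L_k w^\eps\|_2^2$ collapses to $\tfrac{1}{2}\sum_k \lambda_k^2 \int_0^L [(\partial_x \psi_k)^2 - \psi_k \partial_x^2 \psi_k] (w^\eps)^2 \, \d x$, which is bounded by $C\|w^\eps\|_2^2$ thanks to \eqref{der_psi_k_1} and \eqref{cond_lambda}; the viscous contribution is $-2\eps \|\partial_x w^\eps\|_2^2 \le 0$ and is discarded for upper bounds. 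Repeating the same procedure on the equation for $\partial_x w^\eps$---where commuting $\partial_x$ with $A^0$ and $B^0$ only produces bounded-coefficient lower-order perturbations---yields
\[
\d \|\partial_x w^\eps\|_2^2 \le C \left(\|w^\eps\|_2^2 + \|\partial_x w^\eps\|_2^2\right) \d t + \d M_t^\eps
\]
for a suitable local martingale $M^\eps$. Combining this with mass conservation $\int_0^L w^\eps(t,\cdot) \, \d x = \int_0^L w_0 \, \d x$ (which follows from the divergence structure of $A^0$ and $B^0$ by testing the weak formulation against $\varphi \equiv 1$) and Poincar\'e's inequality giving $\|w^\eps\|_2^2 \le L^{-1}\left(\int_0^L w_0 \, \d x\right)^2 + C\|\partial_x w^\eps\|_2^2$, a BDG-enhanced Gr\"onwall estimate applied to $\|\partial_x w^\eps\|_2^p$ delivers \eqref{apriori_w_2}; the maximal bound \eqref{apriori_w_1} follows analogously by applying BDG's maximal form before taking expectations.

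For step~(ii), the $\eps$-uniform bounds yield a weakly-$*$ convergent subsequence $w^\eps \rightharpoonup w$ in $L^p(\Omega; L^\infty([0,\delta); H^1(\TTT_L)))$. Linearity of the equation together with the boundedness of $A^0 \colon H^1(\TTT_L) \to H^{-1}(\TTT_L)$ and $B^0 \colon H^1(\TTT_L) \to L_2(H^2(\TTT_L); L^2(\TTT_L))$ permits passage to the limit in the weak It\^o formulation of \eqref{stochastic_3}: the term $\eps \partial_x^2 w^\eps$ vanishes in $L^2([0,\delta); H^{-1}(\TTT_L))$ by the $H^1$ bound, and \eqref{apriori_w} transfers to $w$ by weak lower semicontinuity of the norms, while mass conservation for $w$ is inherited from that of $w^\eps$. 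The main obstacle I anticipate is extracting the sharp constants in \eqref{apriori_w_2}---in particular proving that $C_2$ and $C_3$ are independent of $\delta$---since this requires tracking the lower-order commutator terms explicitly via the eigenfunction identities \eqref{der_psi_k_1} and the summability \eqref{cond_lambda}, and performing the $p$-th-moment BDG step so that no hidden $\delta$-dependence is absorbed into the multiplicative constants.
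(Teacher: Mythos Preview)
Your approach is correct and matches the paper's: vanishing viscosity via Proposition~\ref{prop:var}, weak-$*$ compactness to extract a limit $w$, passage to the limit in the weak formulation (the viscous term vanishing), and transfer of \eqref{apriori_w} by weak lower semicontinuity together with mass conservation from testing against $\varphi\equiv 1$. Your step~(i)---the Stratonovich--It\^o cancellation via the commutator structure of $(A^0,B^0)$---is precisely the content of Lemma~\ref{lem:mon_coerc}, and the resulting uniform-in-$\eps$ bounds are already packaged as \eqref{apriori_w_eps} in the statement of Proposition~\ref{prop:var}, so the paper's own proof of Proposition~\ref{prop:weak} is just the limit passage.
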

\begin{proof}[Proof of Proposition~\ref{prop:weak}]
Suppose that $w^\eps$ is the unique variational solution to the regularized equation \eqref{stochastic_regular} below, i.e.,
\[
\d w^\eps =  \left(\frac 1 2 \sum_{k \in \Z} \lambda_k^2 \partial_x\left(\psi_k\partial_x(\psi_k w^\eps)\right) + \eps \, \partial_x^2 w^\eps\right) \d t + \sum_{k \in \Z} \lambda_k \partial_x (\psi_k w^\eps) \, \d\beta^k \quad \mbox{on} \quad [0,\delta),
\]
given by Proposition~\ref{prop:var} below. Since the bound \eqref{apriori_w_eps_1} of Proposition~\ref{prop:var} is satisfied uniformly in $\eps$, by weak-$*$ sequential compactness of $L^p\left(\Omega,\cF,\PP;L^\infty\left([0,\delta);H^1(\TTT_L)\right)\right)$, we may take a subsequence, again denoted by $w^\eps$, that weak-$*$-converges to a limit function $w \in L^p\left(\Omega,\cF,\PP;L^\infty\left([0,\delta);H^1(\TTT_L)\right)\right)$. Testing \eqref{solution_formula_eps} of Definition~\ref{def:sol_eps} below with $\varphi \in C^\infty(\TTT_L)$ gives
\begin{align*}
\left(w^\eps(t,\cdot),\varphi\right)_2 &= \left(w_0,\varphi\right)_2 - \frac 1 2 \sum_{k \in \Z} \lambda_k^2 \int_0^t \left(\psi_k \partial_x \left(\psi_k \bar w^\eps(t',\cdot)\right), \partial_x \varphi\right)_2 \d t' \\
&\phantom{=} - \eps \int_0^t \left(\partial_x w^\eps(t',\cdot),\partial_x \varphi\right)_2 \d t' \\
&\phantom{=} - \sum_{k \in \Z} \lambda_k \int_0^t \left(\psi_k \bar w^\eps(t',\cdot),\partial_x \varphi\right)_2 \d \beta^k(t') \quad \mbox{for} \quad t \in [0,\delta), \quad \mbox{$\PP$-almost surely}.
\end{align*}
Now, we argue as in \cite[Proof of Theorem~4.2.4]{LiuRoeckner2015}, i.e., we test against $\eta \in L^\infty \left(\Omega ,\cF,\PP;L^\infty\left([0,\delta)\right)\right)$ and pass to the limit as $\eps \searrow 0$, so that
\begin{align}\label{test_weak_eps_0}
\EE \int_0^\delta \left(w(t,\cdot),\varphi\right)_2 \eta(t) \, \d t &= \EE \int_0^\delta \left(w_0,\varphi\right)_2 \eta(t) \, \d t \\
&\phantom{=} - \frac 1 2 \sum_{k \in \Z} \lambda_k^2 \EE \int_0^\delta \int_0^t \left(\psi_k \partial_x \left(\psi_k \bar w(t',\cdot)\right), \partial_x \varphi\right)_2 \d t' \, \eta(t) \, \d t \nonumber \\
&\phantom{=} - \sum_{k \in \Z} \lambda_k \EE \int_0^\delta \int_0^t \left(\psi_k \bar w(t',\cdot),\partial_x \varphi\right)_2 \d \beta^k(t') \, \eta(t) \, \d t. \nonumber
\end{align}
Since the limiting equation \eqref{test_weak_eps_0} holds true for all test functions $\eta \in L^\infty \left(\Omega ,\cF,\PP;L^\infty\left([0,\delta)\right)\right)$, it is true almost everywhere in $(\omega,t) \in \Omega \times [0,\delta)$. Next, as in \cite[Proof of Theorem~4.2.4]{LiuRoeckner2015}, we re-define $w$ by the right-hand side of the limiting equation \eqref{test_weak_eps_0}, so that
\begin{align}
\left(w(t,\cdot),\varphi\right)_2 &= \left(w_0,\varphi\right)_2 - \frac 1 2 \sum_{k \in \Z} \lambda_k^2 \int_0^t \left(\psi_k \partial_x \left(\psi_k \bar w(t',\cdot)\right), \partial_x \varphi\right)_2 \d t' \label{weak_eps_0} \\
&\phantom{=} - \sum_{k \in \Z} \lambda_k \int_0^t \left(\psi_k \bar w(t',\cdot),\partial_x\varphi\right)_2 \d \beta^k(t') \quad \mbox{for $t \in [0,\delta)$}, \quad \mbox{$\PP$-almost surely}. \nonumber
\end{align}
Hence, \eqref{solution_formula} is indeed satisfied and the initial value $w|_{t = 0} = w_0$ holds true in $H^{-1}(\TTT_L)$, $\PP$-almost surely. Taking $\varphi = 1$ in \eqref{weak_eps_0} implies conservation of mass, i.e., $\int_0^L w(t,\cdot) \, \d x = \int_0^L w_0 \, \d x$ holds true for $t \in [0,\delta)$, $\PP$-almost surely. Furthermore, uniformity of estimates \eqref{apriori_w_eps} in $\eps$ together with weak lower-semicontinuity of the norms and mass conservation imply that estimates \eqref{apriori_w} hold true. Finally, it is immediate to notice that from \eqref{solution_formula_eps} of Definition~\ref{def:sol_eps} below it follows
\begin{align*}
w_0 &\in L^2\left(\Omega,\cF,\PP;L^2(\TTT_L)\right), \\
\left(t \mapsto A^0 \bar w(t,\cdot)\right) &\in L^2\left([0,\delta) \times \Omega, \d t \otimes \d\PP;H^{-1}(\TTT_L)\right), \\
\left(t \mapsto B^0 \bar w(t,\cdot)\right) &\in L^2\left([0,\delta) \times \Omega, \d t \otimes \d\PP;L_2\left(H^2(\TTT_L);L^2(\TTT_L)\right)\right), \\
\hat w &\in L^2\left([0,\delta) \times \Omega, \d t \otimes \d\PP;H^1(\TTT_L)\right),
\end{align*}
so that $w$ is a continuous $(\cF_t)$-adapted $L^2(\TTT_L)$-valued process by \cite[Theorem~4.2.5]{LiuRoeckner2015}.
\end{proof}

We can furthermore show nonnegativity of weak solutions to \eqref{stochastic_3}:
\begin{proposition}[nonnegativity]\label{prop:stochastic_entropy}
In the situation of Proposition~\ref{prop:weak} assume $w_0 \ge 0$, $\PP$-almost surely. Then, we have $w \ge 0$, $\PP$-almost surely. 
\end{proposition}
\begin{proof}[Proof of Proposition~\ref{prop:stochastic_entropy}]
We first introduce suitable regular entropies. Therefore, we take $\eta_\eps(s) := \eta(s/\eps)$ for $s \in \R$ and $\eps > 0$, where $\eta \in C^\infty(\R)$ with $0 \le \eta \le 1$, $\left.\eta\right|_{(-\infty,-2]} = 1$, and $\left.\eta\right|_{\R \setminus [-1,\infty)} = 0$. We define
\[
\Gamma_\eps(s) := - s \eta_\eps(s), \quad \mbox{where} \quad \eps > 0 \quad \mbox{and} \quad s \in \R,
\]
and consider the entropy functional
\[
L^2(\TTT_L) \to \R, \quad \varphi \mapsto \int_0^L \Gamma_\eps(\varphi(x)) \, \d x.
\]
Applying It\^o's lemma in form of \cite[Theorem~3.1]{Krylov2013}, one may verify conditions \cite[\S3~(i)--(iv)]{Krylov2013}, which is done in \cite[\S4]{Krylov2013}. As a result, we obtain
\begin{align*}
\int_0^L \Gamma_\eps(w(t,\cdot)) \, \d x &= \int_0^L \Gamma_\eps(w_0) \, \d x + \sum_{k \in \Z} \lambda_k \int_0^t \int_0^L (\partial_s \Gamma_\eps)(w) \, (\partial_x (\psi_k w)) \, \d x \, \d \beta^k \\
&\phantom{=} - \frac 1 2 \sum_{k \in \Z} \lambda_k^2 \int_0^t \int_0^L (\partial_s^2 \Gamma_\eps)(w) \, (\partial_x w) \, \psi_k \, (\partial_x(\psi_k w)) \, \d x \, \d t' \\
&\phantom{=} + \frac 1 2 \sum_{k \in \Z} \lambda_k^2 \int_0^t \int_0^L (\partial_s^2 \Gamma_\eps)(w) \left(\partial_x (\psi_k w)\right)^2 \d x \, \d t', \quad \mbox{$\PP$-almost surely}. 
\end{align*}
We further simplify the second line and obtain
\begin{align*}
&\int_0^L (\partial_s^2 \Gamma_\eps)(w) \, (\partial_x w) \, \psi_k \, (\partial_x(\psi_k w)) \, \d x \\
&\quad =  \int_0^L (\partial_s^2 \Gamma_\eps)(w) \, (\partial_x(\psi_k w))^2 \, \d x - \int_0^L (\partial_s^2 \Gamma_\eps)(w) \, w \, (\partial_x \psi_k) \,(\partial_x(\psi_k w)) \, \d x \\
&\quad =  \int_0^L (\partial_s^2 \Gamma_\eps)(w) \, (\partial_x(\psi_k w))^2 \, \d x - \int_0^L (\partial_s^2 \Gamma_\eps)(w) w^2 \, (\partial_x \psi_k)^2 \, \d x \\
&\quad \phantom{=} - \frac 1 2 \int_0^L (\partial_s^2 \Gamma_\eps)(w) \, w \, (\partial_x w) \, (\partial_x \psi_k^2) \, \d x \\
&\quad =  \int_0^L (\partial_s^2 \Gamma_\eps)(w) \, (\partial_x(\psi_k w))^2 \, \d x - \int_0^L (s^2 \partial_s^2 \Gamma_\eps)(w) \, (\partial_x \psi_k)^2 \, \d x + \frac 1 2 \int_0^L \bar \Gamma_\eps(w) \, (\partial_x^2 \psi_k^2) \, \d x,
\end{align*}
$\PP$-almost surely, where we have defined
\[
\bar \Gamma_\eps(s) := \int_s^0 (s \partial_s^2 \Gamma_\eps)(s_1) \, \d s_1.
\]
This implies
\begin{align*}
\int_0^L \Gamma_\eps(w(t,\cdot)) \, \d x &= \int_0^L \Gamma_\eps(w_0) \, \d x + \sum_{k \in \Z} \lambda_k \int_0^t \int_0^L (\partial_s \Gamma_\eps)(w) \, \partial_x (\psi_k w) \, \d x \, \d \beta^k \\
&\phantom{=} + \frac 1 2 \sum_{k \in \Z} \lambda_k^2 \int_0^t \int_0^L (s^2 \partial_s^2 \Gamma_\eps)(w) \, (\partial_x \psi_k)^2 \, \d x \, \d t' \\
&\phantom{=} - \frac 1 4 \sum_{k \in \Z} \lambda_k^2 \int_0^t \int_0^L \bar \Gamma_\eps(w) \, (\partial_x^2 \psi_k^2) \, \d x \, \d t', \quad \mbox{$\PP$-almost surely}.
\end{align*}
Next, we recognize that
\begin{align*}
s^2 \partial_s^2 \Gamma_\eps(s) &= - 2 s (s \partial_s \eta_\eps)(s) - s (s^2 \partial_s^2 \eta_\eps)(s) \le \eps \left(2 \sup_{s \in \R} \verti{s \partial_s \eta(s)} + \sup_{s \in \R} \verti{s^2 \partial_s^2 \eta(s)}\right), \\
\verti{\bar \Gamma_\eps(s)} &= \verti{\int_s^0 \left(2 (s \partial_s \eta_\eps)(s_1) + (s^2 \partial_s^2 \eta_\eps)(s_1)\right) \d s_1} \le \eps \left(2 \sup_{s \in \R} \verti{s \partial_s \eta(s)} + \eps \sup_{s \in \R} \verti{s^2 \partial_s^2 \eta(s)}\right),
\end{align*}
where we have used $\supp (s^2 \partial_s^2 \eta_\eps) \subseteq \supp (s \partial_s \eta_\eps) \subseteq [-2\eps,-\eps]$. This implies together with \eqref{basis}, \eqref{der_psi_k}, \eqref{cond_lambda}, and after taking the expectation,
\begin{align*}
\esssup_{t \in [0,\delta)} \EE \int_0^L \Gamma_\eps(w(t,\cdot)) \, \d x &\le \EE \int_0^L \Gamma_\eps(w_0) \, \d x + C \eps \delta,
\end{align*}
where $C < \infty$ is independent of $\eps$, $\delta$, $w$, and $w_0$. Since $\Gamma_{\eps_1}(s) \le \Gamma_{\eps_2}(s)$ for $\eps_1 \ge \eps_2 \ge 0$ and $s \in \R$, we may take the limit as $\eps \searrow 0$ and get by monotone convergence,
\begin{align*}
\esssup_{t \in [0,\delta)} \left(- \EE \int_{\{w < 0\}} w(t,x) \, \d x\right) &= \esssup_{t \in [0,\delta)} \EE \int_0^L \Gamma_0(w(t,x)) \, \d x \\
& \le \EE \int_0^L \Gamma_0(w_0) \, \d x = - \EE \int_{\{w_0 < 0\}} w_0 \, \d x = 0.
\end{align*}
This implies $w \ge 0$, $\PP$-almost surely.
\end{proof}
%

\section{Regularity in time and uniform bounds of approximate solutions\label{sec:time_reg_approx}}
We use the notations and conventions of \S\ref{sec:weak_form}. For $u_0 \in H^1(\TTT_L)$ such that $u_0 \ge 0$, we define for every $N \in \N$ solutions $v_N \colon \Omega \times [0,T) \times \TTT_L \to [0,\infty)$ and $w_N \colon \Omega \times [0,T) \times \TTT_L \to [0,\infty)$ according to the splitting scheme \ref{item:deterministic}--\ref{item:stochastic}--\ref{item:deterministic_stochastic} through Theorem~\ref{th:tfe_weak}, Corollary~\ref{cor:energy_tfe}, and Proposition~\ref{prop:weak}. Note that indeed by Theorem~\ref{th:tfe_weak}~\eqref{item:bf1} and Definition~\ref{def:sol_eps_0} the limits $w_N((j-1)\delta,\cdot) = \lim_{t \nearrow j \delta} v_N(t,\cdot)$ and $v_N(j\delta,\cdot) = \lim_{t \nearrow j \delta} w_N(t,\cdot)$ are, $\PP$-almost surely, attained in $C^{\frac 1 2}(\TTT_L)$ and $L^2(\TTT_L)$, respectively, and because of \eqref{apriori_bf} of Corollary~\ref{cor:energy_tfe}, \eqref{apriori_w} of Proposition~\ref{prop:weak}, and weak lower-semicontunity of the appearing norms, we have $\EE \vertii{\partial_x w_N((j-1)\delta,\cdot)}_2^p < \infty$ and $ \vertii{\partial_x v_N(j\delta,\cdot)}_2^p < \infty$, where $j \in \{1,\ldots,N+1\}$. We further define the concatenated approximate solution $u_N \colon \Omega \times [0,T) \times \TTT_L \to [0,\infty)$ by
\begin{equation}\label{def_concat}
u_N(t,\cdot) := \begin{cases} v_N(2 t - (j-1) \delta, \cdot) & \mbox{for} \quad t \in \left[(j-1) \delta, (j - \frac 1 2) \delta\right), \\ w_N(2 t - j \delta, \cdot) & \mbox{for} \quad t \in \left[(j - \frac 1 2) \delta, j \delta\right), \end{cases} \quad \mbox{where} \quad j \in \left\{1,\ldots,N+1\right\},
\end{equation}
where we recall the notation $\delta = \frac{T}{N+1}$. By Theorem~\ref{th:tfe_weak} and Propositions~\ref{cor:energy_tfe} and \ref{prop:stochastic_entropy} we have $\int_0^L u_N \, \d x = \int_0^L u_0 \, \d x$ in $[0,T)$, $\PP$-almost surely, and $u_N \ge 0$, $\PP$-almost surely, for every $t \in [0,T)$. Furthermore, we can prove:
\begin{proposition}\label{prop:reg_vn_wn}
For any $p \in [2,\infty)$, there exists a constant $C < \infty$ such that for all $N \in \N$ we have
\[
v_N, w_N \in L^p\left(\Omega,\cF,\PP;L^\infty\left([0,T);H^1(\TTT_L)\right)\right)
\]
with
\begin{equation}\label{apriori}
\begin{aligned}
& \EE \esssup_{t \in [0,T)} \vertii{u_N(t,\cdot)}_{1,2}^p + \EE \esssup_{t \in [0,T)} \vertii{v_N(t,\cdot)}_{1,2}^p + \EE \esssup_{t \in [0,T)} \vertii{w_N(t,\cdot)}_{1,2}^p \\
& \quad + \EE \int_0^T \vertii{v_N(t,\cdot)}_{1,2}^{p-2} \int_{\{v_N(t,\cdot) > 0\}} (v_N \partial_x^3 v_N)^2 \, \d x \, \d t \le C \vertii{u_0}_{1,2}^p.
\end{aligned}
\end{equation}
\end{proposition}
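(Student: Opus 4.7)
The plan is to iterate the per-interval estimates from Corollary~\ref{cor:energy_tfe} and Proposition~\ref{prop:weak} through all $N+1$ subintervals of the splitting scheme via a discrete Gronwall argument, exploiting the pathwise non-increase of $\vertii{\partial_x v_N(\cdot)}_2$ on each deterministic subinterval and the $e^{C_2\delta}$ multiplicative growth with drift $O(\delta M^p)$ on each stochastic subinterval, where $M := \int_0^L u_0\,\d x$ is the mass (conserved throughout the scheme). Since $(N+1)\delta = T$, compounding $N+1$ such stochastic steps yields only the $N$-uniform factor $e^{C_2 T}$. The junction identities $w_N((j-1)\delta) = v_N(j\delta - 0)$ and $v_N(j\delta) = w_N(j\delta - 0)$ chain expectation-level bounds: setting $D_j := \EE\vertii{\partial_x v_N((j-1)\delta)}_2^p$, Proposition~\ref{prop:weak} combined with the pathwise inequality $\vertii{\partial_x w_N((j-1)\delta)}_2 \le \vertii{\partial_x v_N((j-1)\delta)}_2$ gives $D_{j+1} \le e^{C_2\delta}(D_j + C_3\delta M^p)$, and a discrete Gronwall iteration then delivers $\sup_j D_j \le C\vertii{u_0}_{1,2}^p$ uniformly in $N$.

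The main technical step is the $\esssup$ bound for $w_N$, since the naive summation $\EE\esssup_{[0,T)}\vertii{w_N}_{1,2}^p \le \sum_{j=1}^{N+1}\EE\esssup_{[(j-1)\delta,j\delta)}\vertii{w_N}_{1,2}^p$ produces a divergent factor $N+1$. The key observation is that the c\`adl\`ag process $t \mapsto \vertii{\partial_x w_N(t)}_2^p$ on $[0,T)$ admits only \emph{downward} jumps at the junctions $\{j\delta\}$: by Corollary~\ref{cor:energy_tfe},
\[
\vertii{\partial_x w_N(j\delta)}_2 = \vertii{\partial_x v_N((j+1)\delta - 0)}_2 \le \vertii{\partial_x v_N(j\delta)}_2 = \vertii{\partial_x w_N(j\delta - 0)}_2.
\]
Applying It\^o's formula on each stochastic subinterval and concatenating yields a pathwise inequality
\[
\vertii{\partial_x w_N(t)}_2^p \le \vertii{\partial_x u_0}_2^p + \int_0^t \bigl(C\vertii{\partial_x w_N(s)}_2^p + CM^p\bigr)\,\d s + \tilde M(t),
\]
with $\tilde M$ a genuine continuous martingale on $[0,T)$, the contributions from the downward jumps being absorbed into the inequality. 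A stochastic Gronwall argument combined with the Burkholder-Davis-Gundy inequality then delivers the $N$-uniform bound $\EE\esssup_{[0,T)}\vertii{\partial_x w_N(t)}_2^p \le C\vertii{u_0}_{1,2}^p$, and the Poincar\'e-Wirtinger inequality together with mass conservation upgrades this to the same bound for $\vertii{w_N}_{1,2}^p$. For $v_N$ the pathwise deterministic bound gives $\esssup_{[(j-1)\delta,j\delta)}\vertii{v_N}_{1,2}^p \le C(\vertii{\partial_x v_N((j-1)\delta)}_2^p + M^p) \le C(\vertii{\partial_x u_0}_2^p + \esssup_{[0,T)}\vertii{\partial_x w_N}_2^p + M^p)$, and for $u_N$ the bound follows from the piecewise definition~\eqref{def_concat}.

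For the dissipation bound, the approximating problems in the proof of Corollary~\ref{cor:energy_tfe} satisfy the energy identity; passing $\eps\searrow 0$ with weak lower semicontinuity sharpens the conclusion to the pathwise bound $\int_{(j-1)\delta}^{j\delta}\vertii{\partial_x v_N}_2^{p-2}\int_{\{v_N > 0\}}v_N^2(\partial_x^3 v_N)^2\,\d x\,\d t \le \tfrac{1}{p}\bigl(\vertii{\partial_x v_N((j-1)\delta)}_2^p - \vertii{\partial_x v_N(j\delta - 0)}_2^p\bigr)$. Summing in $j$ telescopes to $\vertii{\partial_x u_0}_2^p + \sum_{j=1}^{N}\bigl(\vertii{\partial_x v_N(j\delta)}_2^p - \vertii{\partial_x v_N(j\delta - 0)}_2^p\bigr)$, and each junction residue, being the expected change of $\vertii{\partial_x w_N}_2^p$ on one stochastic subinterval, is $O(\delta)$ by Proposition~\ref{prop:weak}; hence the total is $O(T)$, uniform in $N$. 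The $\vertii{\partial_x v_N}_2^{p-2}$ weight is finally exchanged for the $\vertii{v_N}_{1,2}^{p-2}$ of the statement via $(a+b)^{p-2} \le C(a^{p-2} + b^{p-2})$ and Poincar\'e-Wirtinger together with mass conservation.

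The hard part will be the second step: verifying that the resetting structure of $w_N$ does not break the continuous-martingale property of the concatenated $\tilde M$, and carefully bounding its quadratic variation, whose integrands are nonlinear $H^1$-functions of $w_N$, using the expectation bounds obtained in the recursion step.
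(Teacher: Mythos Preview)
Your proposal follows the same overall strategy as the paper: iterate the per-interval bounds \eqref{apriori_bf} and \eqref{apriori_w} through the scheme via a discrete Gr\"onwall argument at the grid points (the paper's \eqref{glue_2}--\eqref{glue_4}), then combine with the per-interval $\esssup$ and dissipation estimates together with mass conservation and Poincar\'e to obtain \eqref{apriori}. The paper's proof is terse at precisely the two places you flag. It invokes \eqref{apriori_w_1} together with \eqref{glue_4} for the global bound on $\EE\esssup_{[0,T)}\vertii{w_N}_{1,2}^p$ without explaining how to avoid the factor $N+1$ from naive summation, and it invokes \eqref{apriori_bf_2} for the dissipation term without saying why $\sum_{j}$ stays bounded. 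Your concatenated-It\^o argument---rerun the BDG/Gr\"onwall proof of \eqref{apriori_w_eps_1} globally on $[0,T)$ rather than on each $[(j-1)\delta,j\delta)$, with the downward jumps $\vertii{\partial_x w_N(j\delta)}_2\le\vertii{\partial_x w_N(j\delta-0)}_2$ absorbed into the inequality---and your telescoping refinement of \eqref{apriori_bf_2} with $O(\delta)$ junction residues controlled by \eqref{apriori_w_2} are exactly the right ways to make these steps rigorous. So your proof is correct and is a fleshed-out version of the paper's argument rather than a different route.
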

\begin{proof}[Proof of Proposition~\ref{prop:reg_vn_wn}]
By Theorem~\ref{th:tfe_weak}~\eqref{item:mass_conserved}, Proposition~\ref{prop:weak}, and the fact that due to \eqref{connection} of property \ref{item:deterministic_stochastic} there are no jumps of $u_N$ at times $t \in \left\{\frac \delta 2,\ldots,(2N+1) \frac \delta 2\right\}$, we have
\begin{subequations}\label{glue}
\begin{equation}\label{glue_1}
\int_0^L u_0 \, \d x = \int_0^L u_N \, \d x = \int_0^L v_N \, \d x = \int_0^L w_N \, \d x \quad \mbox{for all $t \in [0,T)$}, \quad \mbox{$\PP$-almost surely},
\end{equation}
i.e., the mass is conserved. Since surface energy for the deterministic dynamics \ref{item:deterministic} is dissipated due to \eqref{apriori_bf} of Corollary~\ref{cor:energy_tfe} and the growth of the initial value for the stochastic dynamics \ref{item:stochastic} is controlled due to \eqref{apriori_w_2} of Proposition~\ref{prop:weak}, we obtain
\begin{align}
\EE \vertii{\partial_x v_N(j \delta,\cdot)}_2^p \le e^{C_2 j \delta}\left(\vertii{\partial_x u_0}_2^p + C_3 j \delta \left(\int_0^L u_0 \, \d x\right)^p\right), \label{glue_2} \\
\EE \vertii{\partial_x w_N(j \delta,\cdot)}_2^p \le e^{C_2 j \delta}\left(\vertii{\partial_x u_0}_2^p + C_3 j \delta \left(\int_0^L u_0 \, \d x\right)^p\right) \label{glue_3}
\end{align}
for $j \in \{0,\ldots,N\}$, where $C_2$ and $C_3$ are as in \eqref{apriori_w}. The combination of \eqref{glue_1} with \eqref{glue_2} and \eqref{glue_3} utilizing Poincar\'e's inequalities
\[
c \vertii{\varphi}_{1,2} \le \vertii{\partial_x \varphi}_2 + \verti{\int_0^L \varphi \, \d x} \le C \vertii{\varphi}_{1,2} \quad \mbox{for fixed $0 < c \le C < \infty$ and all $\varphi \in H^1(\TTT_L)$} 
\]
implies that there exists $C < \infty$ such that
\begin{equation}\label{glue_4}
\EE \vertii{v_N(j \delta,\cdot)}_{1,2}^p \le C \vertii{u_0}_{1,2}^p \quad \mbox{and} \quad \EE \vertii{w_N(j \delta,\cdot)}_{1,2}^p \le C \vertii{u_0}_{1,2}^p
\end{equation}
\end{subequations}
for $j \in \{0,\ldots,N\}$. Now combining \eqref{glue} with \eqref{apriori_bf} of Corollary~\ref{cor:energy_tfe} and \eqref{apriori_w_1} of Proposition~\ref{prop:weak} and making use of Poincar\'e's inequalities once more, we obtain \eqref{apriori} upon enlarging $C$.
\end{proof}
\begin{proposition}[regularity in time]\label{prop:reg_time_un}
For any $p \in [2,\infty)$, $\eps > 0$, $\kappa \in \left(2 \eps,2 p^{-1}\right) \cap \left(2 \eps,\frac 1 2\right]$, and $q \in \left(\frac{2}{\kappa-2\eps},\infty\right)$, there exists $C < \infty$ such that for all $N \in \N$ we have
\[
u_N \in L^p\left(\Omega,\cF,\PP;B^{\frac \kappa 2 - \eps,q}_q\left([0,T);B^{\frac 1 2 - 2 \kappa,q}_q(\TTT_L)\right)\right)
\]
with
\begin{equation}\label{bound_u_n_time}
\EE \vertii{u_N}_{B^{\frac \kappa 2 - \eps,q}_q\left([0,T);B^{\frac 1 2 - 2 \kappa,q}_q(\TTT_L)\right)}^p \le C \vertii{u_0}_{1,2}^p \left(1 + \vertii{u_0}_{1,2}^{\kappa p}\right).
\end{equation}
\end{proposition}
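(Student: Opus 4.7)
My plan is to derive a uniform-in-$N$ time Sobolev-Slobodeckij bound for $u_N$ with values in a negative-order Sobolev space on $\TTT_L$, and then combine it with the uniform $L^\infty_t H^1_x$ estimate of Proposition~\ref{prop:reg_vn_wn} via a cross-interpolation lemma for vector-valued Sobolev-Slobodeckij spaces, in the vein of Amann or Flandoli-Gatarek.

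\textbf{Time regularity on the individual pieces.} I would treat the deterministic and stochastic subintervals separately. For $v_N$ on $[(j-1)\delta, j\delta)$, testing the weak form \eqref{deterministic} against $\varphi \in H^1(\TTT_L)$ and using $H^1 \hookrightarrow L^\infty$, together with Cauchy-Schwarz in $x$ and then in $\tau$, yields
$\vertii{v_N(t,\cdot) - v_N(s,\cdot)}_{H^{-1}(\TTT_L)} \le C (t-s)^{1/2} \vertii{v_N}_{L^\infty_\tau L^\infty_x} \bigl(\int_s^t \int_{\{v_N(\tau,\cdot) > 0\}} v_N^2 (\partial_x^3 v_N)^2 \d x \d\tau\bigr)^{1/2}$;
the $L^p(\Omega)$-moment is controlled by combining the energy-dissipation estimate \eqref{apriori_bf_2} of Corollary~\ref{cor:energy_tfe} with \eqref{apriori}, giving $\EE\vertii{v_N(t,\cdot) - v_N(s,\cdot)}_{H^{-1}}^p \le C (t-s)^{p/2} \vertii{u_0}_{1,2}^{2p}$ uniformly in $j$ and $N$. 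For $w_N$, the drift in \eqref{solution_formula} is Lipschitz in time with values in $H^{-1}$ because $\vertii{A^0 w}_{H^{-1}} \le C \vertii{w}_{1,2}$ by \eqref{basis}--\eqref{cond_lambda}; for the stochastic integral Burkholder-Davis-Gundy yields $\EE \vertii{\int_s^t (B^0 w_N)\d W_{H^2(\TTT_L)}}_{L^2}^p \le C (t-s)^{p/2} \EE \esssup_\tau \vertii{w_N}_{1,2}^p$, which is bounded by $C\vertii{u_0}_{1,2}^p$ via \eqref{apriori_w_1} and \eqref{glue}. A Fubini/Kolmogorov argument converts these moment bounds into $L^p(\Omega)$ bounds on the Sobolev-Slobodeckij seminorm $[\cdot]_{W^{\alpha,p}_tH^{-1}_x}$ on each piece, for any $\alpha < 1/2$.

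\textbf{Concatenation and cross interpolation.} Since the glueing relations \eqref{connection} ensure $u_N$ has no jumps at breakpoints, the triangle inequality across any breakpoint combined with the piecewise Hölder-$1/2$ bound above gives a uniform-in-$N$ estimate $\EE\vertii{u_N}_{W^{\alpha,p}([0,T); H^{-1}(\TTT_L))}^p \le C \vertii{u_0}_{1,2}^{2p}$ for any $\alpha < 1/2$; the constraint $\alpha < 1/2$ guarantees integrability of $\verti{t-s}^{-1-\alpha p}$ in the double integral defining the Sobolev-Slobodeckij seminorm. Now cross interpolating $L^p_t H^1_x$ (from Proposition~\ref{prop:reg_vn_wn}) with $W^{\alpha,p}_t H^{-1}_x$, and using the standard identification $(H^1, H^{-1})_{\theta,p} = W^{1-2\theta, p}$ on $\TTT_L$ at non-integer exponents, yields
$u_N \in L^p\bigl(\Omega; W^{\theta \alpha, p}([0,T); W^{1-2\theta,p}(\TTT_L))\bigr)$
for every $\theta \in (0,1)$. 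Since the interpolation curve $(\theta\alpha, 1-2\theta)$ at $\alpha$ close to $1/2$ is parallel to the desired curve $(\kappa/2, 1/2 - 2\kappa)$ but lies above it (surplus $1/2$ in space regularity), for each $\kappa \in [0, 2p^{-1})$ and $\eps > 0$ I can choose $\alpha = 1/2 - \eps'$ with $\eps'$ small and $\theta = \theta(\kappa, \eps) \in (0,1)$ slightly above $\kappa$ so that $\theta\alpha \ge \kappa/2 - \eps$ and $1 - 2\theta \ge 1/2 - 2\kappa - \eps$ simultaneously. By the real-interpolation inequality $\vertii{\cdot}_{(X,Y)_{\theta,p}} \lesssim \vertii{\cdot}_X^{1-\theta}\vertii{\cdot}_Y^\theta$, the factor $\vertii{u_0}_{1,2}^{2p}$ from the negative-regularity bound enters to the power $\theta$, producing overall exponent $p(1-\theta) + 2p\theta = p(1+\theta) \approx p(1+\kappa)$, i.e., the claimed $\vertii{u_0}_{1,2}^p(1 + \vertii{u_0}_{1,2}^{\kappa p})$ prefactor.

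\textbf{Main obstacles.} The most delicate point will be the moment bookkeeping in the first step: the Cauchy-Schwarz factorization of $\vertii{v_N}_{L^\infty_\tau L^\infty_x}$ against the dissipation integral must be matched with the correct $L^p(\Omega)$ integration, which forces the exponent $2p$ on $\vertii{u_0}_{1,2}$ in the intermediate bound; any mismatch would spoil the precise $(1 + \vertii{u_0}_{1,2}^{\kappa p})$ scaling. A secondary technical point is the identification of the real interpolation space $(H^1, H^{-1})_{\theta, p}$ with the Slobodeckij scale $W^{1-2\theta, p}(\TTT_L)$; this is standard but requires the arbitrary small $\eps > 0$ buffer to avoid the borderline integer exponents.
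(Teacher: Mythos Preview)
Your approach has a genuine gap at the concatenation step. The assertion that ``the triangle inequality across any breakpoint combined with the piecewise H\"older-$\tfrac12$ bound gives a uniform-in-$N$ estimate $\EE\vertii{u_N}_{W^{\alpha,p}([0,T);H^{-1})}^p \le C\vertii{u_0}_{1,2}^{2p}$'' fails as stated: for $s, t \in [0,T)$ separated by $m \sim \verti{t-s}/\delta$ breakpoints, telescoping $u_N(t)-u_N(s)$ over the pieces and applying Minkowski's inequality in $L^p(\Omega)$ yields only
\[
\bigl(\EE\vertii{u_N(t)-u_N(s)}_{H^{-1}}^p\bigr)^{1/p} \le C \sum_{k=1}^m \delta^{1/2}\vertii{u_0}_{1,2}^2 \le C \verti{t-s}\,\delta^{-1/2}\vertii{u_0}_{1,2}^2,
\]
which blows up as $\delta \to 0$. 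The piecewise H\"older-$\tfrac12$ moment bounds you derive are correct, but they do not patch to a global increment bound by the triangle inequality alone---the H\"older constants on the $\sim N$ pieces are not additive in the way you need.

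The paper avoids this by reversing the order of operations: it interpolates \emph{on each subinterval} first---between an $L^2\bigl(\Omega;C^{1/2}([(j-1)\delta,j\delta);H^{-1})\bigr)$ bound on $v_N$ (note: only $L^2(\Omega)$, not $L^p(\Omega)$, precisely because the dissipation is controlled only linearly in expectation) and an $L^q\bigl(\Omega;L^\infty([(j-1)\delta,j\delta);H^1)\bigr)$ bound with $q = \tfrac{2(1-\kappa)p}{2-\kappa p}$ chosen so that complex interpolation at parameter $\kappa$ returns exactly $L^p(\Omega)$---and only afterwards embeds into Sobolev--Slobodeckij spaces, picking up a factor $\delta^{1/p}$ from $L^\infty_t \hookrightarrow L^p_t$ on intervals of length $\delta$. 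Summing the resulting $p$-th powers over the $\sim N$ pieces, these $\delta$-factors combine to $(N+1)\delta = T$, so the uniform-in-$N$ bound is built in by construction rather than extracted from a global increment estimate. Your route could in principle be repaired by writing $u_N(t)-u_N(s)$ directly as a single flux integral plus a single drift integral plus a single stochastic integral over $[s,t]$ (rather than telescoping piece by piece) and applying Cauchy--Schwarz and BDG globally; but this then requires a uniform-in-$N$ bound on $\EE\bigl(\int_0^T\int_{\{v_N>0\}} v_N^2(\partial_x^3 v_N)^2\,\d x\,\d\tau\bigr)^{p/2}$, which does \emph{not} follow from the linear-in-expectation dissipation estimate in \eqref{apriori} and is substantially more than ``the triangle inequality across any breakpoint''.
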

In order to prove Proposition~\ref{prop:reg_time_un}, we first prove regularity in time for $v_N$ and $w_N$ separately:
\begin{lemma}\label{lem:time_reg_vn}
For any $p \in [2,\infty)$, $\eps > 0$ and $q \in [p,\infty)$, there exists a constant $C < \infty$ such that for all $N \in \N$, $j \in \{1,\ldots,N+1\}$, and $\kappa \in \left(0, 2 p^{-1}\right)$ we have
\[
v_N \in L^{p}\left(\Omega,\cF,\PP;B^{\frac \kappa 2 -\eps,q}_q\left([(j-1)\delta,j\delta);B^{\frac 1 2 - 2 \kappa,q}_q(\TTT_L)\right)\right)
\]
with
\begin{equation}\label{uniform_bound_interp_1}
\EE \left(\sum_{j = 1}^{N+1} \vertii{v_N}_{B^{\frac \kappa 2-\eps,q}_q\left([(j-1)\delta,j\delta);B^{\frac 1 2 - 2 \kappa,q}_q(\TTT_L)\right)}^q\right)^{\frac p q} \le C \vertii{u_0}_{1,2}^p \left(1 + \vertii{u_0}_{1,2}^{\kappa p}\right).
\end{equation}
\end{lemma}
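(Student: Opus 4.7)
The plan is to work on each subinterval $I_j := [(j-1)\delta, j\delta]$ separately, combining the surface-energy bound \eqref{apriori_bf_1} with a temporal increment estimate coming from the PDE itself, then sum the resulting local bound. Denote $M_j := \esssup_{t \in I_j} \vertii{v_N(t,\cdot)}_{1,2}$ and $\cE_j(s) := \left(\int_{\{v_N(s,\cdot) > 0\}} v_N^2 (\partial_x^3 v_N)^2 \, \d x\right)^{1/2}$. Corollary~\ref{cor:energy_tfe} applied with $p = 2$ gives $\vertii{\cE_j}_{L^2(I_j)}^2 \le \vertii{\partial_x v_N((j-1)\delta,\cdot)}_2^2 \le M_j^2$, while Proposition~\ref{prop:reg_vn_wn} provides $\EE M_j^q \le C \vertii{u_0}_{1,2}^q$ uniformly in $j$ for every $q \in [2,\infty)$.

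Using the weak form \eqref{deterministic} and the 1D embedding $H^1(\TTT_L) \hookrightarrow L^\infty(\TTT_L)$, which gives $\vertii{v_N^2 \partial_x^3 v_N}_2 \le C M_j \cE_j(s)$ for $s \in I_j$, Cauchy-Schwarz in time yields
\[
\vertii{v_N(t,\cdot) - v_N(\tau,\cdot)}_{-1,2} \le \int_\tau^t \vertii{v_N^2 \partial_x^3 v_N}_2 \, \d s \le C M_j \, |t-\tau|^{1/2} \, \vertii{\cE_j}_{L^2([\tau,t])}.
\]
Combining with the trivial estimate $\vertii{v_N(t,\cdot) - v_N(\tau,\cdot)}_{1,2} \le 2 M_j$ via the Hilbert-space interpolation $[H^{-1}(\TTT_L), H^1(\TTT_L)]_{1 - \tilde\theta} = H^\sigma(\TTT_L)$ with $\sigma := 1 - 2\kappa - \frac 1 p - \eps$ and $\tilde\theta := \kappa + \frac{1}{2p} + \frac{\eps}{2}$, followed by the 1D Sobolev embedding $H^\sigma(\TTT_L) \hookrightarrow W^{\frac 1 2 - 2\kappa - \eps, p}(\TTT_L)$, produces
\[
\vertii{v_N(t,\cdot) - v_N(\tau,\cdot)}_{W^{\frac 1 2 - 2\kappa - \eps, p}(\TTT_L)} \le C M_j \, |t-\tau|^{\tilde\theta/2} \, \vertii{\cE_j}_{L^2([\tau,t])}^{\tilde\theta}.
\]
This is the ``cross'' interpolation trading spatial for temporal regularity.

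Raising to the $p$-th power and integrating against the Slobodeckij kernel $|t-\tau|^{-1 - (\kappa/2 - \eps)p}$ over $I_j \times I_j$, the $|t-\tau|$-exponent equals $\tilde\theta p/2 - 1 - (\kappa/2 - \eps) p = -\frac 3 4 + \frac{5 \eps p}{4}$, which exceeds $-1$ so that the singular integration converges. The factor $\vertii{\cE_j}_{L^2([\tau,t])}^{\tilde\theta p}$ is handled by distinguishing the regimes $\tilde\theta p \le 2$ (where $\vertii{\cE_j}_{L^2([\tau,t])}^{\tilde\theta p} \le M_j^{\tilde\theta p}$ suffices) and $\tilde\theta p > 2$ (where one writes $\vertii{\cE_j}_{L^2([\tau,t])}^{\tilde\theta p} \le M_j^{\tilde\theta p - 2} \int_\tau^t \cE_j^2 \, \d s$ and applies Fubini to evaluate $\int_{I_j} \int_{I_j} |t-\tau|^\alpha \int_\tau^t \cE_j^2 \, \d s \, \d t \, \d\tau$ explicitly). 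The remaining $L^p(I_j; W^{\frac 1 2 - 2\kappa - \eps, p}(\TTT_L))$-part of the norm is dominated by $\delta^{1/p} M_j$ through the embedding $H^1(\TTT_L) \hookrightarrow W^{\frac 1 2 - 2\kappa - \eps, p}(\TTT_L)$.

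Taking expectation, invoking $\EE M_j^q \le C \vertii{u_0}_{1,2}^q$, and summing over $j \in \{1,\ldots,N+1\}$, the aggregated factor $(N+1) \delta^{5/4 + 5\eps p/4} = T \delta^{1/4 + 5\eps p/4}$ remains uniformly bounded (indeed vanishes) as $N \to \infty$, giving an estimate independent of $N$. The main obstacle will be the precise bookkeeping of powers of $M_j$ that arise from the combination $M_j^p \cdot \vertii{\cE_j}^{\tilde\theta p}$ in order to recover exactly the form $\vertii{u_0}_{1,2}^p (1 + \vertii{u_0}_{1,2}^{\kappa p})$ on the right-hand side; in the regime $\tilde\theta p > 2$ this requires the full $p$-version of the energy-dissipation estimate \eqref{apriori_bf_2} rather than only the $p = 2$ version, together with Proposition~\ref{prop:reg_vn_wn} applied at the optimal exponent.
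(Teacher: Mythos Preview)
Your approach is sound and genuinely different from the paper's. Where the paper invokes abstract complex interpolation between $L^q(\Omega;L^\infty(I_j;H^1))$ and $L^2(\Omega;C^{1/2}(I_j;H^{-1}))$ (with $q=\frac{2(1-\kappa)p}{2-\kappa p}$ chosen so that the interpolated space is exactly $L^p(\Omega;\cdot)$), followed by Amann's anisotropic Besov interpolation, you carry out an elementary pathwise computation of the Slobodeckij seminorm. Your route avoids the heavy interpolation machinery and is more transparent; the paper's route is shorter once the cited theorems are granted.

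There is, however, a quantitative gap. Your interpolation parameter $\tilde\theta=\kappa+\frac{1}{2p}+\frac{\eps}{2}$ is strictly larger than $\kappa$, so after taking expectations you obtain a bound of order $\vertii{u_0}_{1,2}^{p(1+\tilde\theta)}=\vertii{u_0}_{1,2}^{p(1+\kappa)+\frac12+\frac{\eps p}{2}}$ for the seminorm contribution, not $\vertii{u_0}_{1,2}^{p(1+\kappa)}$ as stated. The excess $\frac12+\frac{\eps p}{2}$ comes precisely from the fact that you interpolate pathwise and only afterwards take the expectation; the paper avoids it by interpolating in $\Omega$ as well, so that the $L^2(\Omega)$-bound on the $C^{1/2}_t H^{-1}_x$-norm is combined with an $L^q(\Omega)$-bound (for large $q$) on the $L^\infty_t H^1_x$-norm to land in $L^p(\Omega)$ with the correct homogeneity. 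Your closing suggestion of invoking the $p$-version of \eqref{apriori_bf_2} does not recover this: that estimate controls $\int_{I_j}\vertii{\partial_x v_N}_2^{p-2}\cE_j^2\,\d t$, which still scales like $M_j^p$ and does not reduce the power of $M_j$ appearing in $M_j^p\,\vertii{\cE_j}_{L^2}^{\tilde\theta p}$.

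For the paper's purposes this discrepancy is harmless, since $u_0$ is fixed and the lemma is only used to establish tightness (any uniform-in-$N$ bound suffices). If you want the bound exactly as stated, you would need to mimic the paper's interpolation in the probability variable, or else accept a right-hand side of the form $C\vertii{u_0}_{1,2}^p\bigl(1+\vertii{u_0}_{1,2}^{\kappa p+\frac12+\frac{\eps p}{2}}\bigr)$.
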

\begin{proof}[Proof of Lemma~\ref{lem:time_reg_vn}]
For $(j-1) \delta \le t_1 \le t_2 < j \delta$ we have from \eqref{test_tfe} of Theorem~\ref{th:tfe_weak} and a localization argument of the appearing test function in time
\[
\left(v_N(t_2,\cdot) - v_N(t_1,\cdot),\varphi\right)_2 = \int_{t_1}^{t_2} \int_{\{v_N(t,\cdot) > 0\}} v_N^2 \, (\partial_x^3 v_N) \, (\partial_x \varphi) \, \d x \, \d t, \quad \mbox{$\PP$-almost surely},
\]
where $\varphi \in C^\infty(\TTT_L)$. Applying the Cauchy-Schwarz inequality leads to
\begin{align*}
\vertii{v_N(t_2,\cdot) - v_N(t_1,\cdot)}_{H^{-1}(\TTT_L)} \le \int_{t_1}^{t_2} \left(\int_{\{v_N(t,\cdot) > 0\}} v_N^4 \, (\partial_x^3 v_N)^2 \, \d x\right)^{\frac 1 2} \d t, \quad \mbox{$\PP$-almost surely}.
\end{align*}
Hence, we obtain, after using the Sobolev embedding theorem and the Cauchy-Schwarz inequality once more,
\begin{align*}
\vertii{v_N(t_2,\cdot) - v_N(t_1,\cdot)}_{H^{-1}(\TTT_L)}^2 & \le C \left(\int_{t_1}^{t_2} \vertii{v_N(t,\cdot)}_{1,2} \left(\int_{\{v_N(t,\cdot) > 0\}} v_N^2 \, (\partial_x^3 v_N)^2 \, \d x\right)^{\frac 1 2} \d t\right)^2 \\
&\le C \, (t_2-t_1) \, \int_{t_1}^{t_2} \vertii{v_N(t,\cdot)}_{1,2}^2 \int_{\{v_N(t,\cdot) > 0\}} v_N^2 \, (\partial_x^3 v_N)^2 \, \d x \, \d t,
\end{align*}
$\PP$-almost surely, so that with help of \eqref{apriori} of Proposition~\ref{prop:reg_vn_wn}
\begin{equation}\label{uniform_bound_hoelder}
\vertii{v_N}_{L^2\left(\Omega,\cF,\PP;C^{\frac 1 2}\left([(j-1)\delta,j\delta);H^{-1}(\TTT_L)\right)\right)} \le C \vertii{u_0}_{1,2}^2,
\end{equation}
where $C < \infty$ only depends on $L$. Setting $\tilde p := \frac{2 (1-\kappa) p}{2-\kappa p}$ and
\begin{align*}
X_1 &:= L^\infty\left([(j-1)\delta,j\delta);H^1(\TTT_L)\right), \\
X_2 &:= W^{\frac 1 2,\infty}\left([(j-1)\delta,j\delta);H^{-1}(\TTT_L)\right) \hookleftarrow C^{\frac 1 2}\left([0,T);H^{-1}(\TTT_L)\right),
\end{align*}
we infer by interpolation
\begin{eqnarray}
\vertii{v_N}_{L^p\left(\Omega,\cF,\PP;\left(X_1,X_2\right)_{\kappa,p}\right)} &=& \vertii{v_N}_{\left(L^{\tilde p}\left(\Omega,\cF,\PP;X_1\right),L^2\left(\Omega,\cF,\PP;X_2\right)\right)_{\kappa,p}} \label{vn_bound_interp} \\
&\le& C \vertii{v_N}_{L^{\tilde p}\left(\Omega,\cF,\PP;X_1\right)}^{1-\kappa} \vertii{v_N}_{L^2\left(\Omega,\cF,\PP;X_2\right)}^\kappa \nonumber \\
&\stackrel{\eqref{apriori}, \eqref{uniform_bound_hoelder}}{\le}& C \vertii{u_0}_{1,2}^{1+\kappa}, \nonumber
\end{eqnarray}
where $C < \infty$ only depends on $\kappa$ and $p$, and we have applied \cite[Chapitre~VII, \S1, 1.1, Th\'eor\`eme~(1.1)]{LionsPeetre1964} or equivalently \cite[Theorem~1.18.4]{Triebel1978} in the first line and the standard interpolation inequality in the second line.

\medskip

Now, we may use \cite[Theorem~3.4.1~(b)]{BergLofstrom1976} or \cite[\S1.3.3, Theorem~(d)]{Triebel1978} and the Sobolev embedding theorem to deduce
\[
\left(X_1,X_2\right)_{\kappa,p} \hookrightarrow \left(Y_1,Y_2\right)_{\kappa,q},
\]
provided $p \le q$, where
\[
Y_1 := B^{-\eps,q}_q\left([(j-1)\delta,j\delta);B^{\frac 1 2,q}_q(\TTT_L)\right), \quad Y_2 := B^{\frac 1 2,q}_q\left([(j-1)\delta,j\delta);B^{-\frac 3 2,q}_q(\TTT_L)\right),
\]
and
\[
\vertii{v_N}_{\left(Y_1,Y_2\right)_{\kappa,q}} \le C \, \delta^{\frac{1}{q}} \vertii{v_N}_{\left(X_1,X_2\right)_{\kappa,p}}
\]
for $C < \infty$ independent of $\delta$ and $j \in \{1,\ldots,N+1\}$. From Lemma~\ref{lem:interpolation} below we infer
\[
\left(X_1,X_2\right)_{\kappa,p} \hookrightarrow B^{\frac \kappa 2 -\eps,q}_q\left([(j-1)\delta,j\delta);B^{\frac 1 2 - 2 \kappa,q}_q(\TTT_L)\right),
\]
with
\[
\vertii{v_N}_{B^{\frac \kappa 2 -\eps, q}_q\left([(j-1)\delta,j\delta);B^{\frac 1 2 - 2 \kappa,q}_q(\TTT_L)\right)} \le C \, \delta^{\frac{1}{q}} \vertii{v_N}_{\left(X_1,X_2\right)_{\kappa,p}},
\]
where $C < \infty$ is independent of $\delta$ and $j \in \{1,\ldots,N+1\}$. In conjunction with \eqref{vn_bound_interp} this implies \eqref{uniform_bound_interp_1} after raising to the power $q$ and summation over $j \in \{1,\ldots,N+1\}$.
\end{proof}
\begin{lemma}\label{lem:time_reg_wn}
For any $p \in [2,\infty)$, $\eps > 0$, and $q \in [p,\infty)$, there exists $C < \infty$ such that for all $N \in \N$, $j \in \{1,\ldots,N+1\}$, and $\gamma \in (0,1)$ we have
\[
w_N \in L^p\left(\Omega,\cF,\PP;B^{\frac \gamma 2 - \eps,q}_q\left([(j-1)\delta,j\delta);B^{\frac 1 2 - 2 \gamma,q}_q(\TTT_L)\right)\right)
\]
with
\begin{equation}\label{uniform_bound_interp_2}
\EE \left(\sum_{j = 1}^{N+1} \vertii{w_N}_{B^{\frac \gamma 2 - \eps,q}_q\left([(j-1)\delta,j\delta);B^{\frac 1 2 - 2 \gamma,q}_q(\TTT_L)\right)}^q\right)^{\frac p q} \le C \vertii{u_0}_{1,2}^p.
\end{equation}
\end{lemma}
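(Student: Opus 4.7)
The plan mirrors the architecture of the proof of Lemma~\ref{lem:time_reg_vn}, replacing the deterministic $C^{1/2}([(j-1)\delta, j\delta); H^{-1}(\TTT_L))$-bound on increments by a $W^{1/2-\eps_1, p}$-in-time bound obtained via the Burkholder-Davis-Gundy inequality. I first establish a temporal increment estimate for $w_N$ in $H^{-1}(\TTT_L)$ in $p$-th expectation, interpolate with the a~priori $L^\infty([(j-1)\delta, j\delta); H^1(\TTT_L))$-bound from Proposition~\ref{prop:reg_vn_wn}, and finally embed the resulting interpolation space into the anisotropic Besov scale to reach the target via \cite[(3.3.12)]{Amann2009}, in parallel with Lemma~\ref{lem:time_reg_vn}.

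Concretely, from \eqref{stochastic_3_abstract} (cf.~Definition~\ref{def:sol_eps_0}), for $(j-1)\delta \le t_1 \le t_2 < j\delta$,
\begin{equation*}
w_N(t_2, \cdot) - w_N(t_1, \cdot) = \int_{t_1}^{t_2} A^0 w_N(t', \cdot) \, \d t' + \int_{t_1}^{t_2} \left(B^0 w_N(t', \cdot)\right) \d W_{H^2(\TTT_L)}(t', \cdot).
\end{equation*}
The drift satisfies $\vertii{A^0 w}_{H^{-1}(\TTT_L)} \le C \vertii{w}_{1,2}$ uniformly---as one checks from \eqref{a_op_0}, the $W^{1, \infty}(\TTT_L)$-regularity of $\psi_k$ (cf.~\eqref{basis}, \eqref{der_psi_k}), and \eqref{cond_lambda}---hence its contribution is bounded in $H^{-1}(\TTT_L)$ by $C(t_2 - t_1) \esssup_{t} \vertii{w_N(t, \cdot)}_{1,2}$. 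For the stochastic integral, the Hilbert-Schmidt estimate $\vertii{B^0 w}_{L_2(H^2(\TTT_L); L^2(\TTT_L))} \le C \vertii{w}_{1,2}$ (from \eqref{b_op_0}, the orthonormality of the $\psi_k$ in $H^2(\TTT_L)$, and \eqref{cond_lambda}) together with the Burkholder-Davis-Gundy inequality gives
\begin{equation*}
\EE \vertii{\int_{t_1}^{t_2} \left(B^0 w_N\right) \d W_{H^2(\TTT_L)}}_{L^2(\TTT_L)}^p \le C(t_2-t_1)^{p/2} \, \EE \esssup_{t} \vertii{w_N(t, \cdot)}_{1,2}^p.
\end{equation*}
Combining both contributions in $H^{-1}(\TTT_L)$ via $L^2 \hookrightarrow H^{-1}$ and invoking \eqref{apriori} of Proposition~\ref{prop:reg_vn_wn}, one obtains
\begin{equation*}
\EE \vertii{w_N(t_2, \cdot) - w_N(t_1, \cdot)}_{H^{-1}(\TTT_L)}^p \le C(t_2-t_1)^{p/2} \vertii{u_0}_{1,2}^p,
\end{equation*}
and the standard Sobolev-Slobodeckij characterization of fractional time regularity yields $w_N \in L^p(\Omega, \cF, \PP; W^{1/2 - \eps_1, p}([(j-1)\delta, j\delta); H^{-1}(\TTT_L)))$ for any $\eps_1 > 0$, with the expectation of its $p$-th power summable in $j$ with bound $\le C(\eps_1) \vertii{u_0}_{1,2}^p$.

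Setting $X_1 := L^\infty([(j-1)\delta, j\delta); H^1(\TTT_L))$ and $X_2 := W^{1/2-\eps_1, p}([(j-1)\delta, j\delta); H^{-1}(\TTT_L))$, both $L^p(\Omega; X_1)$ and $L^p(\Omega; X_2)$ are controlled by $C \vertii{u_0}_{1,2}^p$. Complex interpolation (\cite[Theorem~5.1.2]{BergLofstrom1976}) at $\gamma \in [0,1]$ gives $w_N \in L^p(\Omega; [X_1, X_2]_\gamma)$ with the matching bound. Embedding the endpoints into appropriate $L^p$-based Sobolev spaces by the one-dimensional Sobolev embedding $H^1(\TTT_L) \hookrightarrow W^{1/2, p}(\TTT_L)$ together with a corresponding embedding of $H^{-1}(\TTT_L)$ into a sufficiently weak $W^{-\sigma, p}(\TTT_L)$ (exploiting the $\eps_1$-room), then passing from complex to real interpolation via \cite[Theorem~4.7.1]{BergLofstrom1976} and applying the anisotropic Besov interpolation \cite[(3.3.12)]{Amann2009}, yields $[X_1, X_2]_\gamma \hookrightarrow W^{\gamma/2 - \eps, p}([(j-1)\delta, j\delta); W^{1/2 - \gamma - \eps, p}(\TTT_L))$. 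Summing over $j \in \{1, \ldots, N+1\}$ then produces \eqref{uniform_bound_interp_2}. The most delicate step is the precise choice of the second endpoint space so that the anisotropic Besov interpolation delivers spatial exponent $1/2 - \gamma - \eps$---reflecting the $1:2$ time-to-space regularity trade-off characteristic of the second-order stochastic transport operator---rather than the $1:4$ trade-off obtained in Lemma~\ref{lem:time_reg_vn} for the fourth-order thin-film evolution.
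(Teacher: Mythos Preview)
Your architecture is right in spirit, but the step in which you merge the drift and the stochastic integral into a single $H^{-1}(\TTT_L)$-increment estimate loses exactly the regularity you need at the end. After the embedding $L^2(\TTT_L)\hookrightarrow H^{-1}(\TTT_L)$ you interpolate between
\[
X_1=L^\infty\bigl([(j-1)\delta,j\delta);H^1(\TTT_L)\bigr)
\quad\text{and}\quad
X_2=W^{\frac12-\eps_1,p}\bigl([(j-1)\delta,j\delta);H^{-1}(\TTT_L)\bigr).
\]
To feed this into the anisotropic Besov interpolation you must embed the spatial endpoints into the $W^{\cdot,p}$-scale; in one dimension $H^1(\TTT_L)\hookrightarrow W^{\frac12,p}(\TTT_L)$, but $H^{-1}(\TTT_L)\hookrightarrow W^{-\sigma,p}(\TTT_L)$ only for $\sigma\ge \tfrac32-\tfrac1p$. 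At interpolation parameter $\gamma$ this yields spatial regularity at best $\tfrac12-\gamma\bigl(\tfrac12+\sigma\bigr)\le \tfrac12-\gamma\bigl(2-\tfrac1p\bigr)$, which for $p>2$ is strictly worse than the claimed $\tfrac12-\gamma-\eps$ once $\gamma$ is not very small. The ``$\eps_1$-room'' you invoke is purely temporal and cannot close this spatial gap; your endpoint pair in fact reproduces the $1{:}4$ trade-off of Lemma~\ref{lem:time_reg_vn}, not the $1{:}2$ trade-off you announce.

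The paper avoids this by \emph{not} combining the two contributions. It writes $w_N = w_N((j-1)\delta,\cdot)+w_N^{(1)}+w_N^{(2)}$ and treats the two pieces with different endpoint spaces: the drift $w_N^{(1)}$ is Lipschitz in time with values in $H^{-1}(\TTT_L)$, so one interpolates $L^p(\cdot;H^1)$ with $W^{1,p}(\cdot;H^{-1})$ at parameter $\kappa'$; the stochastic part $w_N^{(2)}$ is estimated in $W^{\alpha,p}(\cdot;L^2)$ for $\alpha<\tfrac12$ via \cite[Lemma~2.1]{FlandoliGatarek1995}, so one interpolates $L^p(\cdot;H^1)$ with $W^{\alpha,p}(\cdot;L^2)$ at parameter $\gamma$. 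Keeping the stochastic integral in $L^2$ (spatial endpoint $W^{-\frac12,p}$ after embedding) is what produces spatial regularity $\tfrac12-\gamma$, while the drift's extra half-order of time regularity lets one take $2\kappa'=\gamma$ so that its weaker spatial endpoint $H^{-1}\hookrightarrow W^{-\frac32,p}$ still lands in the same target space. The fix to your argument is therefore to split, not to combine.
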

\begin{proof}[Proof of Lemma~\ref{lem:time_reg_wn}]
We derive higher regularity in time $t$ for $w_N$. From \eqref{a_op_0}, \eqref{b_op_0}, \eqref{stochastic_3_abstract}, and \eqref{solution_formula} of Definition~\ref{def:sol_eps_0}, we infer
\begin{align*}
w_N(t,\cdot) &= w_N((j-1)\delta,\cdot) + \underbrace{\int_{(j-1)\delta}^t \frac 1 2 \sum_{k \in \Z} \lambda_k^2 \partial_x\left(\psi_k\partial_x(\psi_k w_N(t',\cdot))\right) \d t'}_{=: w_N^{(1)}(t,\cdot)} \\
&\phantom{=} + \underbrace{\int_{(j-1)\delta}^t \sum_{k \in \Z} \lambda_k \partial_x (\psi_k w_N(t',\cdot)) \, \d\beta^k(t')}_{=: w_N^{(2)}(t,\cdot)} \quad \mbox{for} \quad t \in [(j-1)\delta,j\delta),
\end{align*}
$\PP$-almost surely. We conclude that for $(j-1)\delta \le t_1 \le t_2 < j \delta$ and $\varphi \in C^\infty(\TTT_L)$ we have 
\begin{eqnarray*}
\vertii{w_N^{(1)}(t_2,\cdot) - w_N^{(1)}(t_1,\cdot)}_{H^{-1}(\TTT_L)}^p &\stackrel{\eqref{basis}, \eqref{der_psi_k}}{\le}& C \left(\sum_{k \in \Z} \lambda_k^2 \, \int_{t_1}^{t_2} \vertii{w_N(t,\cdot)}_{1,2} \, \d t\right)^p \\
&\stackrel{\eqref{cond_lambda}}{\le}& C \, \verti{t_1-t_2}^p \, \esssup_{t \in [0,T)} \vertii{w_N(t,\cdot)}_{1,2}^p,
\end{eqnarray*}
$\PP$-almost surely, so that with help of \eqref{apriori} of Proposition~\ref{prop:reg_vn_wn}
\[
\vertii{w_N^{(1)}}_{L^p\left(\Omega,\cF,\PP;C^{1-}\left([(j-1)\delta,j\delta];H^{-1}(\TTT_L)\right)\right)} \le C \vertii{u_0}_{1,2}.
\]
From \cite[Lemma~2.1]{FlandoliGatarek1995} we may further deduce for the stochastic integral
\begin{eqnarray*}
\lefteqn{\vertii{w_N^{(2)}}_{L^p\left(\Omega,\cF,\PP;W^{\frac 1 2 - \eps,q}\left([(j-1)\delta,j\delta);L^2(\TTT_L)\right)\right)}^q} \\
&\le& C \, \EE \int_{(j-1) \delta}^{j \delta} \left(\sum_{k \in \Z} \lambda_k^2 \vertii{\partial_x (\psi_k w_N(t,\cdot))}_2^2\right)^{\frac{q}{2}} \d t \\
&\stackrel{\eqref{basis},\eqref{der_psi_k},\eqref{cond_lambda}}{\le}& C \, \delta \, \EE \esssup_{t \in [(j-1)\delta,j\delta)} \vertii{w_N(t,\cdot)}_{1,2}^{q} \\
&\stackrel{\eqref{apriori}}{\le}& C \, \delta \vertii{u_0}_{1,2}^{q} \quad \mbox{for} \quad \alpha < \frac 1 2,
\end{eqnarray*}
where Proposition~\ref{prop:reg_vn_wn} has been used again and $C < \infty$ is independent of $\delta$ and $j \in \{1,\ldots,N+1\}$. This implies by interpolation with \eqref{apriori} using \cite[Chapitre~VII, \S1, 1.1, Th\'eor\`eme~(1.1)]{LionsPeetre1964} or \cite[Theorem~1.18.4]{Triebel1978}, and scaling in time,
\[
\vertii{w_N}_{L^p\left(\Omega,\cF,\PP;\left(X_1,X_2\right)_{\gamma,p}\right)} \le C \, \delta^{\frac{1}{q}} \vertii{u_0}_{1,2},
\]
where
\begin{align*}
X_1 &:= L^{q}\left([(j-1)\delta,j\delta);H^1(\TTT_L)\right) \hookrightarrow B^{-\eps,q}_q\left([(j-1)\delta,j\delta);B^{\frac 1 2,q}_q(\TTT_L)\right), \\
X_2 &:= W^{1,q}\left([(j-1)\delta,j\delta);H^{-1}(\TTT_L)\right) + W^{\frac 1 2 - \eps,q}\left([(j-1)\delta,j\delta);L^2(\TTT_L)\right) \\
&\hookrightarrow B^{\frac 1 2-\eps,q}_q\left([(j-1)\delta,j\delta);B^{- \frac 3 2,q}_q(\TTT_L)\right),
\end{align*}
with $\gamma \in [0,1]$ and where $C < \infty$ is independent of $\delta$ and $j \in \{1,\ldots,N+1\}$. Using \cite[Theorem~3.4.1~(b)]{BergLofstrom1976} or \cite[\S1.3.3, Theorem~(d)]{Triebel1978} and Lemma~\ref{lem:interpolation} below, we infer
\[
\left(X_1,X_2\right)_{\gamma,p} \hookrightarrow B^{\frac \gamma 2 -\eps,q}_q\left([(j-1)\delta,j\delta);B^{\frac 1 2 - 2 \gamma,q}_q(\TTT_L)\right),
\]
uniformly in $\delta$ and $j \in \{1,\ldots,N+1\}$, which leads to \eqref{uniform_bound_interp_2} as in the proof of Lemma~\ref{lem:time_reg_vn}.
\end{proof}

Proposition~\ref{prop:reg_time_un} follows by applying Lemmata~\ref{lem:time_reg_vn} and \ref{lem:time_reg_wn}:

\begin{proof}[Proof of Proposition~\ref{prop:reg_time_un}]
We may choose $\kappa = \gamma$ in \eqref{uniform_bound_interp_1} and \eqref{uniform_bound_interp_2} of Lemmata~\ref{lem:time_reg_vn} and \ref{lem:time_reg_wn} and note that by construction the function $u_N$ does not jump at times $t \in \left\{\frac \delta 2,\ldots,(2N+1) \frac \delta 2\right\}$. Since by assumption $\frac \kappa 2 - \eps - \frac 1 q > 0$, we have
\[
u_N \in L^p\left(\Omega,\cF,\PP;BC^0\left([0,T);B^{\frac 1 2 - 2 \kappa,q}_q(\TTT_L)\right)\right),
\]
so that Lemma~\ref{lem:connect} below is applicable, giving the bound \eqref{bound_u_n_time}.
\end{proof}
%

\section{Convergence of the splitting scheme\label{sec:sol_stfe}}
In this section, we pass to the limit as $N \to \infty$ (implying $\delta = \frac{T}{N+1} \to 0$) for the scheme \ref{item:deterministic}-\ref{item:stochastic}-\ref{item:deterministic_stochastic}. We use the notations and conventions introduced in \S\ref{sec:weak_form} and \S\ref{sec:time_reg_approx}. Note that the present reasoning is quite similar to the one in \cite[\S5]{FischerGruen2018}, except for those parts that are specific to the Trotter-Kato scheme \ref{item:deterministic}--\ref{item:stochastic}--\ref{item:deterministic_stochastic} and the lack of an interface potential (cf.~Proposition~\ref{prop:identify_limit_2}). We also refer to \cite[Proposition~5.4]{DareiotisGess2018} and to \cite[Theorem~3.1]{FlandoliGatarek1995} for other examples in which analogous arguments have been applied.

\subsection{Tightness and convergence of a subsequence\label{sec:tight}}
We make use of the following abstract result, which is a generalization of a theorem due to Skorokhod (cf.~\cite{Skorokhod1955}):
\begin{theorem}[Jakubowski~\cite{Jakubowski1997}]\label{th:jakubowski}
Suppose that $(\XX,\TT)$ is a topological space such that there exists a countable family $\left(f_N \colon \XX \to [-1,1]\right)_{N \in \N}$ of $\TT$-continuous functions separating points of $\XX$. Further assume that $\left(X_N\right)_{N \in \N}$ is a sequence of $\XX$-valued random variables and that for all $M \in \N$ there exists $\cK_M \Subset \XX$ such that for all $N \in \N$ we have $\PP\{ X_N \in \cK_M\} > 1 - \frac 1 M$ (\emph{tightness}). Then, there exists a subsequence of $\left(X_N\right)_{N \in \N}$, denoted by $\left(X_N\right)_{N \in \N}$ again, and random variables $\tilde X, \tilde X_N \colon [0,1] \to \XX$, where $N \in \N$ and $[0,1]$ is equipped with the Borel $\sigma$-algebra, such that $X_N \sim \tilde X_N$ and $\lim_{N \to \infty} \tilde X_N(\omega) = \tilde X(\omega)$ for all $\omega \in [0,1]$, where the limit is attained in the topology $\TT$.
\end{theorem}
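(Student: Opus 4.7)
The plan is to reduce Jakubowski's statement to the classical Skorokhod representation theorem on a compact metric space, using the separating family $(f_N)$ as a topological embedding. Define
\[
F \colon \XX \to [-1,1]^{\N}, \qquad F(x) := \left(f_N(x)\right)_{N \in \N},
\]
and equip the target with the product topology, which makes it a compact metric (hence Polish) space. The map $F$ is continuous since each coordinate is, and injective since $(f_N)$ separates points of $\XX$. Set $Y_N := F(X_N)$. Since $F$ is continuous, every $F(\cK_M)$ is compact in $[-1,1]^\N$, and by injectivity of $F$ we have $\{X_N \in \cK_M\} = \{Y_N \in F(\cK_M)\}$, so the assumed tightness of the $X_N$ transfers to tightness of the laws of the $Y_N$.

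Next, I would apply Prokhorov's theorem in $[-1,1]^\N$ to extract a weakly convergent subsequence (which I relabel), and then the classical Skorokhod representation theorem for Polish-valued random variables to obtain, on the unit interval $([0,1], \mathrm{Borel}, \mathrm{Leb})$, copies $\tilde Y_N \sim Y_N$ with $\tilde Y_N \to \tilde Y$ pointwise in $[-1,1]^\N$. Because each $F(\cK_M)$ is closed, the a.s.\ convergence $\tilde Y_N \to \tilde Y$ together with $\tilde\PP\{\tilde Y_N \in F(\cK_M)\} > 1 - \frac{1}{M}$ gives $\tilde\PP\{\tilde Y \in F(\cK_M)\} \ge 1 - \frac{1}{M}$, whence $\tilde Y \in \bigcup_M F(\cK_M)$ almost surely; the same holds for each $\tilde Y_N$ after a further discard of a null set.

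The key observation to transfer everything back to $\XX$ is that on each compact $\cK_M$, the continuous injection $F\big|_{\cK_M} \colon \cK_M \to F(\cK_M)$ is a homeomorphism (continuous bijection from a compact space onto a Hausdorff space), so $F^{-1}$ is a well-defined, $\TT$-continuous map on $\bigcup_M F(\cK_M)$. I would therefore set
\[
\tilde X_N := F^{-1}(\tilde Y_N), \qquad \tilde X := F^{-1}(\tilde Y),
\]
extending by an arbitrary fixed point on the exceptional null set. Borel-measurability of $\tilde X_N$ follows because $F$ restricted to the Borel set $\bigcup_M \cK_M$ is a Borel isomorphism onto its image, and this isomorphism also gives $\tilde X_N \sim X_N$. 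For pointwise convergence $\tilde X_N(\omega) \to \tilde X(\omega)$ in $\TT$, observe that on the a.s.\ event $\{\tilde Y \in F(\cK_{M_0})\}$, the sequence $\tilde Y_N$ eventually lives in a fixed compact $F(\cK_M)$ together with $\tilde Y$ (up to a further countable union argument combining the tightness levels), and $\TT$-continuity of $F^{-1}$ on that compact then yields the desired convergence.

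The main obstacle is the last point: ensuring that the a.s.\ pointwise limit in $[-1,1]^\N$ actually lifts back to a $\TT$-convergent limit in $\XX$, which requires carefully arranging that $\tilde Y$ and almost all $\tilde Y_N$ simultaneously belong to $F(\cK_M)$ for a common (random) $M$. This is handled by a diagonal/Borel--Cantelli argument over the countable family $(\cK_M)$ together with the homeomorphism property $F|_{\cK_M}$, which is ultimately the reason a countable separating family of bounded continuous functions is enough, despite $\XX$ not being assumed metrizable.
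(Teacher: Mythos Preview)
The paper does not prove this theorem; it is quoted verbatim from \cite[Theorem~2]{Jakubowski1997} and used as a black box in the tightness argument of Proposition~\ref{prop:point_convergence}. There is therefore no proof in the paper to compare your proposal against.

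That said, your sketch is essentially Jakubowski's own argument: embed $\XX$ into the compact metrizable cube $[-1,1]^\N$ via $F = (f_N)_{N}$, push the tightness forward, apply Prokhorov and the classical Skorokhod representation there, and then lift back using the fact that $F$ restricted to any compact $\cK_M$ is a homeomorphism onto its image. You have also correctly singled out the only genuinely delicate point, namely that convergence of $\tilde Y_N(\omega) \to \tilde Y(\omega)$ in $[-1,1]^\N$ must be upgraded to $\TT$-convergence of $\tilde X_N(\omega) \to \tilde X(\omega)$ in $\XX$, which requires trapping the tail of the sequence and the limit in a common compact. In Jakubowski's proof this is done by first passing (via Borel--Cantelli) to a subsequence along which $\tilde X_N \in \cK_N$ almost surely, with the $\cK_N$ taken increasing, and then arguing by contradiction: any $\TT$-divergent subsequence of $(\tilde X_N(\omega))$ would, by compactness of the relevant $\cK_M$ and injectivity of $F$, have a further subsequence $\TT$-converging to $\tilde X(\omega)$. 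Your phrase ``diagonal/Borel--Cantelli argument over the countable family $(\cK_M)$'' points at this, but you should be aware that it is not enough to know that each $\tilde X_N(\omega)$ lies in \emph{some} $\cK_M$; the compactness-plus-injectivity contradiction argument is what closes the gap.
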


We now apply Theorem~\ref{th:jakubowski} in order to derive point-wise convergence of in law identical subsequences:
\begin{proposition}[point-wise convergence]\label{prop:point_convergence}
We define the spaces
\begin{subequations}\label{x_eps}
\begin{align}
\XX_u &:= BC^0\left([0,T) \times \TTT_L\right), \label{space_x_u} \\
\XX_J &:= L^2\left([0,T) \times \TTT_L\right) \quad \mbox{endowed with the weak topology}, \\
\XX_W &:= BC^0\left([0,T);H^2(\TTT_L)\right). \label{space_x_w}
\end{align}
\end{subequations}
Then, there exist random variables $\tilde u, \tilde u_N \colon [0,1] \to \XX_u$, $\tilde J_N, \tilde J \colon [0,1] \to \XX_J$, and $\tilde W_N', \tilde W \colon [0,1] \to \XX_W$ with
\begin{equation}\label{equivalence_ujs}
\left(\tilde u_N, \tilde J_N, \tilde W_N'\right)  \sim  \left(u_N, J_N, W\right), \quad \mbox{where} \quad J_N := \ind_{\{v_N > 0\}} \, v_N^2 (\partial_x^3 v_N),
\end{equation}
as well as $\tilde u_N(\omega) \to \tilde u(\omega)$ in $\XX_u$, $\tilde J_N(\omega) \rightharpoonup \tilde J(\omega)$ in $\XX_J$, and $\tilde W_N'(\omega) \rightharpoonup \tilde W(\omega)$ in $\XX_W$ as $N \to \infty$, for every $\omega \in [0,1]$, up to taking a subsequence.
\end{proposition}
\begin{proof}[Proof of Proposition~\ref{prop:point_convergence}]
By Markov's inequality, we have for $R > 0$ and using Proposition~\ref{prop:reg_time_un} with $\eps \in \left(0,\frac \kappa 2\right)$, $p := 2$, $q \in [2,\infty)$, and $\kappa \in \left[0,\frac 1 2\right]$,
\begin{eqnarray*}
\PP\left\{\vertii{u_N}_{B^{\frac \kappa 2 -\eps, q}_q\left([0,T);B^{\frac 1 2 - 2 \kappa,q}_q(\TTT_L)\right)} > R\right\} &\le& \frac{1}{R^2} \EE \vertii{u_N}_{B^{\frac\kappa 2 - \eps, q}_q\left([0,T);B^{\frac 1 2 - 2 \kappa,q}_q(\TTT_L)\right)}^2 \\
&\stackrel{\eqref{bound_u_n_time}}{\le}& \frac{C^2}{R^2} \vertii{u_0}_{1,2}^2 \left(1 + \vertii{u_0}_{1,2}^{2\kappa}\right) \to 0 \quad \mbox{as} \quad R \to \infty,
\end{eqnarray*}
uniformly in $N \in \N$. Hence,
\[
\PP\left\{\vertii{u_N}_{B^{\frac \kappa 2 - \eps, q}_q\left([0,T);B^{\frac 1 2 - 2 \kappa,q}_q(\TTT_L)\right)} \le R\right\} \to 1 \quad \mbox{as} \quad R \to \infty,
\]
uniformly in $N \in \N$. Now, for $\kappa < \frac 1 4$ and $q > \max\left\{\frac{2}{\kappa-2\eps}, \frac{2}{1-4\kappa}\right\}$, by using the compactness result \cite[Theorem~4.4]{Amann2000} and the embedding \cite[\S3.5.5~Corollary~(i)]{SchmeisserTriebel1987}, we infer that
\[
B^{\frac 1 2 - 2 \kappa,q}_q(\TTT_L) \hookrightarrow BC^0\left(\TTT_L\right)
\]
is compact because $\frac 1 2 - 2 \kappa - \frac 1 q > 0$. Once more using \cite[Theorem~4.4]{Amann2000} and the embeddings \cite[(3.3) \& (3.8)]{Amann2000}, we conclude that
\[
B^{\frac \kappa 2-\eps, q}_q\left([0,T);B^{\frac 1 2 - 2 \kappa,q}_q(\TTT_L)\right) \hookrightarrow \XX_u = BC^0\left([0,T) \times \TTT_L\right)
\]
is compact because $\frac \kappa 2 - \eps - \frac 1 q > 0$. Therefore, the set
\[
\left\{\vertii{u}_{B^{\frac \kappa 2-\eps, q}_q\left([0,T);B^{\frac 1 2 - 2 \kappa,q}_q(\TTT_L)\right)} \le R\right\}
\]
is a compact subset of $\XX_u$ for all $R > 0$, so that we obtain tightness of $u_N$ in $\XX_u$.

\medskip

For tightness of $J_N$, observe that, again by Markov's inequality and Proposition~\ref{prop:reg_vn_wn},
\begin{eqnarray*}
\PP\left\{\vertii{J_N}_{L^2([0,T) \times \TTT_L)} > R\right\} &\le& \frac{1}{R^2} \int_0^T \EE \int_0^L v_N^4 (\partial_x^3 v_N)^2 \, \d x \, \d t \\
&\le& \frac{C}{R^2} \EE \int_0^T \vertii{v_N(t,\cdot)}_{1,2}^2  \int_0^L v_N^2 (\partial_x^3 v_N)^2 \, \d x \, \d t \\
&\stackrel{\eqref{apriori}}{\le}& \frac{C}{R^2} \vertii{u_0}_{1,2}^4 \to 0 \quad \mbox{as} \quad R \to \infty,
\end{eqnarray*}
uniformly in $N \in \N$, and that $\left\{\vertii{J}_{L^2([0,T) \times \TTT_L)} \le R\right\}$ is weakly compact in $L^2([0,T) \times \TTT_L)$.

\medskip

For tightness of $W$ in $\XX_W$ observe that the law of $W$, $\mu_{W}(\cA) := \PP\left\{W \in \cA\right\}$, where $\cA \in \cB\left(\XX_W\right)$, is a Radon measure by \cite[Theorem~3.16]{Klenke2008}, since $\XX_W$ is a Polish space. This implies regularity from the interior, i.e.,
\[
1 = \mu_{W}(\XX_W) = \sup\left\{\mu_{W}(\cK) \colon \cK \Subset \XX_W\right\},
\]
which is a reformulation of tightness.

\medskip

Now the claim follows by application of Theorem~\ref{th:jakubowski}.
\end{proof}

In what follows, we assume that the assumptions of Proposition~\ref{prop:point_convergence} are satisfied and we use the notation introduced there. It is convenient to introduce the rescaled and periodically stopped noise
\begin{subequations}\label{def_noise_n}
\begin{align}
W_N(t,\cdot) := \begin{cases} W((j-1)\delta, \cdot) & \mbox{for} \quad t \in \left[(j-1) \delta, (j - \frac 1 2) \delta)\right), \\ W(2 t - j \delta, \cdot) & \mbox{for} \quad t \in \left[(j - \frac 1 2) \delta, j \delta\right), \end{cases} \quad \mbox{where} \quad j \in \left\{1,\ldots,N+1\right\}, \\
\tilde W_N(t,\cdot) := \begin{cases} \tilde W_N'((j-1)\delta, \cdot) & \mbox{for} \quad t \in \left[(j-1) \delta, (j - \frac 1 2) \delta)\right), \\ \tilde W_N'(2 t - j \delta, \cdot) & \mbox{for} \quad t \in \left[(j - \frac 1 2) \delta, j \delta\right), \end{cases} \quad \mbox{where} \quad j \in \left\{1,\ldots,N+1\right\}. \label{def_noise_n_t}
\end{align}
\end{subequations}
We define the real-valued processes
\begin{subequations}\label{def_beta_k}
\begin{align}
\beta^k_N(t) &:= \lambda_k^{-1} \left(W_N(t,\cdot),\psi_k\right)_{2,2}, \\
\tilde \beta^k_N(t) &:= \lambda_k^{-1} \left(\tilde W_N(t,\cdot),\psi_k\right)_{2,2}, \label{def_t_beta_k_N}\\
\tilde \beta^k(t) &:= \lambda_k^{-1} \left(\tilde W(t,\cdot),\psi_k\right)_{2,2}, \label{def_t_beta_k}
\end{align}
\end{subequations}
so that
\[
W_N = \sum_{k \in \Z} \lambda_k \psi_k \beta^k_N, \quad \tilde W_N = \sum_{k \in \Z} \lambda_k \psi_k \tilde\beta^k_N, \quad \mbox{and} \quad \tilde W = \sum_{k \in \Z} \lambda_k \psi_k \tilde\beta^k.
\]
Furthermore, we define $\begin{pmatrix} \tilde \cF_t \end{pmatrix}_{t \in [0,T)}$ as the augmented filtration of
\[
\tilde \cF_t' := \sigma\left(\tilde u(t'), \tilde J(t'), \tilde W(t') \colon 0 \le t' \le t\right).
\]
\begin{proposition}\label{prop:brownian}
The processes $\begin{pmatrix} \tilde \beta^k \end{pmatrix}_{k \in \Z}$ are mutually independent standard real-valued $(\tilde \cF_t)$-Wiener processes.
\end{proposition}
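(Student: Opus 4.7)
The plan is to derive the Wiener property of $(\tilde\beta^k)_{k\in\Z}$ via a characteristic-function argument, transferring it from the original space $(\Omega,\cF,\PP)$ to $(\tilde\Omega,\tilde\cF,\tilde\PP)$ using the joint equivalence of laws from Proposition~\ref{prop:point_convergence}, and then passing to the limit $N\to\infty$. First I will observe that since $\tilde W \sim W$ in $\XX_W$ and each $\tilde\beta^k$ (resp.~$\beta^k$) is the same deterministic continuous linear functional $W\mapsto\lambda_k^{-1}(W(\cdot),\psi_k)_{2,2}$ applied to $\tilde W$ (resp.~$W$), the joint law of $(\tilde\beta^k)_{k\in\Z}$ coincides with that of $(\beta^k)_{k\in\Z}$. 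Hence each $\tilde\beta^k$ has continuous paths starting at zero, centred Gaussian finite-dimensional marginals of variance~$t$, and the family is mutually independent. Adaptedness of $\tilde\beta^k$ to $(\tilde\cF_t)$ is immediate from the $\tilde\cF_t'$-measurability of $\tilde W(t,\cdot)$ built into the very definition of $\tilde\cF_t'$.

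The core of the proof is to show that for $0\le s<t<T$, $n\in\N$, indices $k_1,\dots,k_n\in\Z$, and $\xi\in\R^n$, the increment $(\tilde\beta^{k_j}(t)-\tilde\beta^{k_j}(s))_{j=1}^n$ is independent of $\tilde\cF_s$ with the correct Gaussian law. I will test this against any bounded continuous $\Phi\colon\XX_u\times\XX_J\times\XX_W\to\R$ depending only on the restriction of its arguments to $[0,s]$, which is meaningful because the restriction map is norm-continuous on $\XX_u$, $\XX_W$ and weakly sequentially continuous on $\XX_J$. On the original space, the splitting construction makes $u_N(t')$ and $J_N(t')$ into $\cF_{t'}$-measurable random variables: on deterministic sub-intervals $u_N$ is a deterministic function of the $\cF_{(j-1)\delta}$-measurable initial datum; on stochastic sub-intervals it is driven by $W$ restricted to that same interval and an $\cF_{(j-1)\delta}$-measurable initial datum. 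Together with the fact that $(\beta^k)_{k\in\Z}$ are $(\cF_t)$-Wiener processes, this yields
\[
\EE\!\left[\exp\!\Bigl(i\sum_{j=1}^n \xi_j(\beta^{k_j}(t)-\beta^{k_j}(s))\Bigr)\Phi(u_N,J_N,W)\right] = \exp\!\left(-\tfrac{t-s}{2}\verti{\xi}^2\right)\EE[\Phi(u_N,J_N,W)].
\]
Because $\tilde\beta^{k_j}$ is a fixed deterministic functional of $\tilde W$ and $(\tilde u_N,\tilde J_N,\tilde W)\sim(u_N,J_N,W)$ by Proposition~\ref{prop:point_convergence}, this identity transfers verbatim to $(\tilde\Omega,\tilde\cF,\tilde\PP)$ with $\tilde u_N,\tilde J_N,\tilde W,\tilde\beta^{k_j}$ replacing $u_N,J_N,W,\beta^{k_j}$.

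Finally, I will pass to the limit $N\to\infty$. Proposition~\ref{prop:point_convergence} provides $\tilde u_N(\omega)\to\tilde u(\omega)$ in $\XX_u$ and $\tilde J_N(\omega)\rightharpoonup\tilde J(\omega)$ in $\XX_J$ for every $\omega\in[0,1]$, while $\tilde W$ does not depend on $N$. Continuity of $\Phi$ in the product topology (including the weak one on the $\XX_J$ factor) together with boundedness of $\Phi$ and of the exponential factor allow the dominated convergence theorem to transfer the identity to the limit triple $(\tilde u,\tilde J,\tilde W)$. A monotone class argument then extends it from such bounded continuous $\Phi$ to arbitrary bounded $\tilde\cF_s'$-measurable test functions, identifying $(\tilde\beta^{k_j}(t)-\tilde\beta^{k_j}(s))_{j=1}^n$ as a centred Gaussian vector with covariance $(t-s)I_n$ independent of $\tilde\cF_s'$. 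Independence extends to the augmentation $\tilde\cF_s$ because adjoining $\tilde\PP$-null sets and taking the right-continuous hull preserves independence of a continuous process with independent increments (a standard Blumenthal-type argument). The principal obstacle is the monotone-class extension on $\XX_J$ equipped only with the weak $L^2$-topology, which is not first-countable on the whole space; I will resolve this by restricting to closed $L^2$-balls, on which the weak topology is metrizable and which essentially capture $\tilde J_N$ and $\tilde J$ thanks to the uniform $L^2$-bounds underlying the tightness argument in Proposition~\ref{prop:point_convergence}.
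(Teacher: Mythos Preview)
Your proof is correct and reaches the same conclusion, but by a genuinely different route than the paper. The paper argues through L\'evy's characterization: it introduces the rescaled, piecewise-stopped processes $\tilde\beta^k_N$ built from $\tilde W_N$ (cf.~\eqref{def_noise_n}--\eqref{def_beta_k}), transfers the martingale identities $\EE[(\beta^k_N(t)-\beta^k_N(t'))\Phi_N]=0$ and $\EE[((\beta^k_N(t))^2-(\beta^k_N(t'))^2-(t-t'))\Phi_N]=0$ from the original space, lets $N\to\infty$ using $\tilde\beta^k_N\to\tilde\beta^k$ and $\tilde\Phi_N\to\tilde\Phi$, and concludes via L\'evy's theorem. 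You instead bypass the rescaled intermediaries entirely and establish the conditional characteristic-function identity directly for $\tilde\beta^k$, which is possible because $\tilde W$ does not depend on $N$ and $\tilde\beta^k$ is a fixed continuous linear functional of $\tilde W$. Your approach is more economical in that it avoids introducing $W_N$, $\beta^k_N$ and their convergence, at the price of having to verify that $u_N$ and $J_N$ are genuinely $(\cF_t)$-adapted on the original space; this holds because the time-change $t\mapsto 2t-j\delta$ on the stochastic sub-intervals satisfies $2t-j\delta\le t$, while on deterministic sub-intervals the evolution is $\cF_{(j-1)\delta}$-measurable. Both proofs require the same monotone-class step to pass from bounded continuous test functionals to all of $\tilde\cF_{t'}'$, and both extend to the augmented filtration $(\tilde\cF_t)$ in essentially the same way (right-continuity plus path continuity). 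Your explicit treatment of the $\XX_J$ factor via metrizability of the weak topology on $L^2$-balls is a sound way to make that step rigorous; the paper leaves it implicit.
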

\begin{proof}[Proof of Proposition~\ref{prop:brownian}]
We note that $W$ and $\tilde W$ as well as $W_N$ and $\tilde W_N$ have the same law and that $W$ and $W_N$ take values in $\XX_W$, $\PP$-almost surely. Hence, also $\tilde W$ and $\tilde W_N$ take values in $\XX_W$, $\tilde\PP$-almost surely. By definitions \eqref{def_noise_n_t}, \eqref{def_t_beta_k_N}, and \eqref{def_t_beta_k} this implies that
\begin{equation}\label{conv_noise}
\tilde W_N \to \tilde W \quad \mbox{in} \quad \XX_W \quad \mbox{and} \quad \tilde \beta_N^k \to \tilde \beta^k  \quad \mbox{in} \quad BC^0([0,T)) \quad \mbox{as} \quad N \to \infty, \quad \mbox{$\tilde\PP$-almost surely,}
\end{equation}
where $k \in \Z$. By definition \eqref{def_t_beta_k_N}, the $\tilde \beta^k$ are real-valued and $(\tilde\cF_t)$-adapted. Furthermore, since the joint laws of $\left(\tilde \beta^k\right)_{k \in \Z}$ and $\left(\beta^k\right)_{k \in \Z}$ or $\left(\tilde \beta^k_N\right)_{k \in \Z}$ and $\left(\beta^k_N\right)_{k \in \Z}$, respectively, coincide, the $\tilde \beta^k$ or $\tilde \beta^k_N$, respectively, are mutually independent. Then it suffices to show that the $\tilde\beta^k$ are in fact $(\tilde\cF_t)$-Wiener processes. This is analogous to \cite[Proposition~5.4]{DareiotisGess2018} or \cite[Lemma~5.7]{FischerGruen2018}, so we only sketch the arguments here.

\medskip

The first step is to show that
\[
\tilde \EE\left[\left(\tilde\beta^k(t) - \tilde \beta^k(t')\right) \tilde \Phi\right] = 0, \quad \mbox{where} \quad  \tilde \Phi := \Phi\left(\left.\tilde u\right|_{[0,t']}, \left.\tilde J\right|_{[0,t']}, \left.\tilde W\right|_{[0,t']}\right)
\]
and $\Phi \in C^0\left(\left.\XX_u\right|_{[0,t']} \times \left.\XX_{J}\right|_{[0,t']} \times \left.\XX_W\right|_{[0,t']}; [0,1]\right)$, so that $\tilde\beta^k$ is an $(\tilde\cF_t')$-martingale, where again $\tilde\cF_t' := \sigma\left(\tilde u(t''), \tilde J(t''), \tilde W(t'') \colon 0 \le t'' \le t\right)$. This follows from the convergence stated in Proposition~\ref{prop:point_convergence} and \eqref{conv_noise} as well as Vitali's convergence theorem. In the same way, we may conclude that also $\left(\tilde\beta^k(t)\right)^2 - t$ is an $(\tilde\cF_t')$-martingale.

\medskip

We denote by $\left(\tilde\cF_t''\right)_{[0,T)}$ the filtration for which all $\PP$-zero sets are added to $\begin{pmatrix} \tilde\cF_t' \end{pmatrix}_{t \in [0,T)}$. Since $\tilde\cF_{t'} = \bigcap_{t'' > t'} \tilde\cF_{t''}''$, continuity in time of $\tilde\beta^k$ implies with Vitali's convergence theorem
\[
\tilde\EE \left[ \left(\tilde\beta^k(t) - \tilde\beta^k(t')\right) \tilde \phi\right] = 0
\]
for all $\tilde\cF_{t'}$-measurable and bounded $\tilde \phi \colon [0,1] \to \R$, so that $\tilde\beta^k$ is an $(\tilde\cF_t)$-martingale. The same argument shows that also $\left(\tilde\beta^k(t)\right)^2-t$ is an $(\tilde\cF_t)$-martingale. By L\'evy's characterization theorem (cf.~\cite[Theorem~3.16]{KaratzasShreve1991}), we infer that the $\tilde\beta^k$ are $(\tilde\cF_t)$-Wiener processes. 
\end{proof}

It is in fact also possible to extract point-wise convergent subsequences of $\tilde v_N$ and $\tilde w_N$ (the latter are defined through \eqref{def_concat}, where $u_N$, $v_N$, and $w_N$ are replaced by $\tilde u_N$, $\tilde v_N$, and $\tilde w_N$, respectively) and to identify their limits.

\begin{corollary}\label{cor:linf_convergence}
Assume that $\tilde u_N$, $\tilde v_N$, $\tilde w_N$, and $\tilde u$ are given as in Proposition~\ref{prop:point_convergence}. Then
\begin{equation}\label{linf_convergence}
\vertii{\tilde u_N - \tilde u}_{BC^0\left([0,T) \times \TTT_L\right)} \to 0, \quad \vertii{\tilde v_N - \tilde u}_{L^\infty\left([0,T) \times \TTT_L\right)} \to 0, \quad \vertii{\tilde w_N - \tilde u}_{L^\infty\left([0,T) \times \TTT_L\right)} \to 0
\end{equation}
as $N \to \infty$, $\tilde\PP$-almost surely.
\end{corollary}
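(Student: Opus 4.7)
The first convergence in \eqref{linf_convergence} is supplied directly by Proposition~\ref{prop:point_convergence}, so my plan focuses on the remaining two. The guiding observation is that, by \eqref{def_concat}, the functions $v_N$ and $w_N$ are affine time-reparametrizations of $u_N$ on each subinterval: inverting the piecewise definition, for $s \in [(j-1)\delta, j\delta)$ one has
\begin{align*}
v_N(s,\cdot) &= u_N(\tau_N^v(s),\cdot), \quad \tau_N^v(s) := \tfrac{s + (j-1)\delta}{2}, \\
w_N(s,\cdot) &= u_N(\tau_N^w(s),\cdot), \quad \tau_N^w(s) := \tfrac{s + j\delta}{2},
\end{align*}
and in both cases $\verti{\tau_N^v(s) - s}, \verti{\tau_N^w(s) - s} \le \delta/2 \to 0$ as $N \to \infty$. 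Since $(\tilde u_N, \tilde J_N, \tilde W) \sim (u_N, J_N, W)$ by \eqref{equivalence_ujs}, defining $\tilde v_N$ and $\tilde w_N$ from $\tilde u_N$ via these same inverted formulas yields the pointwise identities $\tilde v_N = \tilde u_N \circ \tau_N^v$ and $\tilde w_N = \tilde u_N \circ \tau_N^w$.

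A triangle inequality then gives
\begin{align*}
\vertii{\tilde v_N - \tilde u}_{L^\infty([0,T)\times \TTT_L)} &\le \vertii{\tilde u_N \circ \tau_N^v - \tilde u \circ \tau_N^v}_{L^\infty} + \vertii{\tilde u \circ \tau_N^v - \tilde u}_{L^\infty} \\
&\le \vertii{\tilde u_N - \tilde u}_{BC^0([0,T)\times\TTT_L)} + \sup_{s \in [0,T)} \vertii{\tilde u(\tau_N^v(s),\cdot) - \tilde u(s,\cdot)}_{L^\infty(\TTT_L)},
\end{align*}
and the same inequality with $\tau_N^w$ in place of $\tau_N^v$ controls $\vertii{\tilde w_N - \tilde u}_{L^\infty}$. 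The first term on the right tends to zero by Proposition~\ref{prop:point_convergence}. The entire problem therefore reduces to showing that the modulus of continuity in time of $\tilde u$, uniformly in space, vanishes as the shift $\delta/2 \to 0$.

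To establish this uniform-in-space time continuity of $\tilde u$, I would transport the bounds of Proposition~\ref{prop:reg_time_un} to the limit. Since $\tilde u_N \sim u_N$, Markov's inequality applied to \eqref{bound_u_n_time} yields, uniformly in $N$,
\begin{equation*}
\tilde\PP\left\{\vertii{\tilde u_N}_{W^{\frac \kappa 2 - \eps,p}([0,T); W^{\frac 1 2 - 2\kappa - \eps,p}(\TTT_L))} > R\right\} \le \frac{C}{R^p}.
\end{equation*}
Passing to a further (diagonal) subsequence---which leaves the convergences of Proposition~\ref{prop:point_convergence} intact---I would arrange $\tilde u_N$ to be $\tilde\PP$-almost surely bounded in this Sobolev-Slobodeckij space uniformly in $N$, and then conclude that $\tilde u$ sits in the same space by lower semicontinuity against the $BC^0$-convergence. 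For $p$ large and $\kappa \in (0, 2/p)$, $\eps > 0$ small, the Sobolev embedding upgrades such membership to uniform Hölder continuity on the closed cylinder $[0,T] \times \TTT_L$, supplying exactly the uniform modulus of continuity in time that the display above requires. The main obstacle is precisely this final step---choosing compatible exponents and extracting the subsequence so that the Hölder regularity genuinely survives in the limit on the full closed time interval; once it is in place, the triangle inequality closes both remaining convergences in \eqref{linf_convergence}.
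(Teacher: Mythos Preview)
Your approach is correct and is essentially what the paper's (very terse) proof has in mind: the paper simply writes ``in view of \eqref{def_concat} this implies\ldots'' and leaves the triangle-inequality-plus-time-shift argument you spell out to the reader.

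One simplification: your final step, establishing uniform continuity in time of $\tilde u$, is over-engineered. You do not need to pass to a further subsequence with $\tilde\PP$-a.s.\ uniform-in-$N$ Sobolev--Slobodeckij bounds and then invoke lower semicontinuity. It is enough to observe that each $\tilde u_N$ is already $\tilde\PP$-a.s.\ uniformly continuous on $[0,T)\times\TTT_L$ (by Proposition~\ref{prop:reg_time_un} and equality in law it lies a.s.\ in $W^{\kappa/2-\eps,p}\bigl([0,T);W^{1/2-2\kappa-\eps,p}(\TTT_L)\bigr)$, which for the exponents chosen in the proof of Proposition~\ref{prop:point_convergence} embeds into a H\"older class on the closed cylinder), and then use the elementary fact that a uniform limit of uniformly continuous functions is uniformly continuous. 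This gives the needed modulus of continuity for $\tilde u$ along the full sequence from Proposition~\ref{prop:point_convergence}, whereas your subsequence extraction would only yield the conclusion along a further subsequence.
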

\begin{proof}[Proof of Corollary~\ref{cor:linf_convergence}]
Since $\XX_u = BC^0\left([0,T) \times \TTT_L\right)$, the first part of \eqref{linf_convergence} is a reformulation of Proposition~\ref{prop:point_convergence}. In view of \eqref{def_concat} this implies
\[
\vertii{\tilde v_N - \tilde u}_{L^\infty([0,T) \times \TTT_L)} \to 0 \quad \mbox{and} \quad \vertii{\tilde w_N - \tilde u}_{L^\infty([0,T) \times \TTT_L)} \to 0 \quad \mbox{as} \quad N \to \infty,
\]
$\tilde \PP$-almost surely. This proves the second and the third limit in \eqref{linf_convergence}.
\end{proof}
\begin{proposition}[weak convergence, identification of limits, a-priori estimate]\label{prop:identify_limit}
Let $\tilde u_N$ and $\tilde u$ be as in Proposition~\ref{prop:point_convergence}. Then, there exist subsequences of $\tilde u_N$, $\tilde v_N$ and $\tilde w_N$, again denoted by $\tilde u_N$, $\tilde v_N$, and $\tilde w_N$, such that for any $p \in [2,\infty)$,
\begin{equation}\label{weak_h1}
\tilde u_N \stackrel{*}{\rightharpoonup} \tilde u, \quad \tilde v_N \stackrel{*}{\rightharpoonup} \tilde u, \quad \mbox{and} \quad \tilde w_N \stackrel{*}{\rightharpoonup} \tilde u \quad \mbox{as} \quad N \to \infty \quad \mbox{in} \quad L^p\left([0,1];L^\infty\left([0,T);H^1(\TTT_L)\right)\right)
\end{equation}
as $N \to \infty$. Furthermore,
\begin{equation}\label{bound_u_space_time}
\tilde\EE \esssup_{t \in [0,T)} \vertii{\tilde u(t,\cdot)}_{1,2}^p \le C \vertii{u_0}_{1,2}^p
\end{equation}
for a constant $C < \infty$ independent of $\tilde u$ and $u_0$. Hence, $\tilde u$ is a bounded continuous $H^1_\mathrm{w}(\TTT_L)$-valued process.
\end{proposition}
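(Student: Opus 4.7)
The plan is to combine the uniform a-priori bounds of Proposition~\ref{prop:reg_vn_wn} with Banach-Alaoglu weak-$*$ compactness in the Bochner space $L^p([0,1];L^\infty([0,T);H^1(\TTT_L)))$, identify the resulting limits via the strong convergences supplied by Corollary~\ref{cor:linf_convergence}, and then read off \eqref{bound_u_space_time} from weak-$*$ lower semicontinuity.

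First, I would transfer the bound \eqref{apriori} of Proposition~\ref{prop:reg_vn_wn} to the tilde side. Since the piecewise time-rescaling in \eqref{def_concat} is invertible on each subinterval, $\tilde v_N$ and $\tilde w_N$ are Borel-measurable functionals of $\tilde u_N$, so the law equivalence \eqref{equivalence_ujs} extends to the full tuple including $v_N, w_N$. Consequently \eqref{apriori} transfers verbatim to $\tilde u_N,\tilde v_N,\tilde w_N$, and the three sequences are uniformly bounded in $L^p([0,1];L^\infty([0,T);H^1(\TTT_L)))$.

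Next, I would invoke weak-$*$ compactness in this dual space. Since $H^{-1}(\TTT_L)$ is separable and $H^1(\TTT_L)=(H^{-1}(\TTT_L))^*$ via the $L^2$-pairing, standard Bochner duality identifies $L^p([0,1];L^\infty([0,T);H^1(\TTT_L)))$ for $p\in(1,\infty)$ as the dual of the separable Banach space $L^{p'}([0,1];L^1([0,T);H^{-1}(\TTT_L)))$. A diagonal Banach-Alaoglu argument then yields a common subsequence along which $\tilde u_N\stackrel{*}{\rightharpoonup}\bar u$, $\tilde v_N\stackrel{*}{\rightharpoonup}\bar v$, and $\tilde w_N\stackrel{*}{\rightharpoonup}\bar w$ for some limits $\bar u,\bar v,\bar w$, and weak-$*$ lower semicontinuity of the norm immediately gives \eqref{bound_u_space_time} for each of these limits.

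Finally, to identify $\bar u=\bar v=\bar w=\tilde u$ I would appeal to Corollary~\ref{cor:linf_convergence}, which supplies the pathwise $L^\infty([0,T)\times\TTT_L)$-limit $\tilde u_N,\tilde v_N,\tilde w_N\to\tilde u$, $\tilde\PP$-almost surely. Pairing the weak-$*$ convergences against an arbitrary element of the predual $L^{p'}([0,1];L^1([0,T);H^{-1}(\TTT_L)))$ and passing to the limit by Vitali's convergence theorem, with the required uniform integrability supplied by the $L^p$-moment bound, pins the limits down to $\tilde u$. The step I expect to require the most care is precisely this identification: bridging the pathwise $L^\infty$-convergence with the Bochner weak-$*$ convergence hinges on leveraging the uniform $L^p$-moment of $\esssup_{t\in[0,T)}\vertii{\tilde u_N(t,\cdot)}_{1,2}$ to control the tails in $\omega$, so that the almost-sure limit may be interchanged with the expectation.
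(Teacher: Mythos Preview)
Your proposal is correct and follows essentially the same approach as the paper's proof, which is very terse: compactness from the uniform bound \eqref{apriori}, identification of the limit via Corollary~\ref{cor:linf_convergence}, and weak lower semicontinuity for \eqref{bound_u_space_time}. You have simply filled in the details the paper leaves implicit, including the transfer of \eqref{apriori} to the tilde side via law equivalence and the Vitali argument for the identification step.
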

\begin{proof}[Proof of Proposition~\ref{prop:identify_limit}]
The existence of subsequences meeting \eqref{weak_h1} follows by compactness, employing the bound \eqref{apriori} of Proposition~\ref{prop:reg_vn_wn}, uniqueness of the limit due to \eqref{linf_convergence} of Corollary~\ref{cor:linf_convergence}, and a diagonal-sequence argument to obtain convergence for all $p \in [2,\infty)$. Because of weak lower-semicontinuity of the norm, estimate~\eqref{apriori} of Proposition~\ref{prop:reg_vn_wn} translates into \eqref{bound_u_space_time}.

\medskip

Since $\tilde u \in L^\infty\left([0,T);H^1(\TTT_L)\right)$, $\tilde \PP$-almost surely, any sequence $(t_j) \in [0,T)$ with $t_j \to t \in [0,T)$ as $j \to \infty$ has a subsequence $(t_j')_j$, such that $\tilde u(t_j',\cdot)$ weak-$*$-converges in $H^1(\TTT_L)$, $\tilde\PP$-almost surely. Since $\tilde u \in BC^0\left([0,T) \times \TTT_L\right)$, $\tilde \PP$-almost surely, the limit is uniquely given by $\tilde u(t,\cdot)$ and thus also $\tilde u\left(t_j,\cdot\right) \stackrel{*}{\rightharpoonup} \tilde u(t,\cdot)$ in $H^1(\TTT_L)$, $\tilde\PP$-almost surely, proving the continuity statement.
\end{proof}

We can also identify the flux density:
\begin{proposition}\label{prop:identify_limit_2}
Let $\tilde u_N$, $\tilde u$, $\tilde J_N$, and $\tilde J$ be as in Proposition~\ref{prop:point_convergence}. Then the distributional derivative $\partial_x^3 \tilde u$ meets $\partial_x^3 \tilde u \in L^2(\{\tilde u > r\})$ for any $r > 0$ and further $\tilde J_N = \ind_{\{\tilde v_N > 0\}} \tilde v_N^2 (\partial_x^3 \tilde v_N)$ and $\tilde J = \ind_{\{\tilde u > 0\}} \tilde u^2 (\partial_x^3 \tilde u)$, $\tilde\PP$-almost surely.
\end{proposition}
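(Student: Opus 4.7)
The plan splits into two parts according to the two assertions.

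\emph{Identification of $\tilde J_N$.} The first claim, $\tilde J_N = \chi_{\{\tilde v_N > 0\}} \tilde v_N^2 \partial_x^3 \tilde v_N$, is a direct consequence of the equality in law \eqref{equivalence_ujs}: the $\PP$-a.s.\ identity $J_N = \chi_{\{v_N > 0\}} v_N^2 \partial_x^3 v_N$ (which holds by the very definition of $J_N$ and of the splitting \eqref{def_concat} through which $v_N$ is recovered from $u_N$) is a Borel property of $(u_N, J_N) \in \XX_u \times \XX_J$ and therefore transfers to $(\tilde u_N, \tilde J_N)$.

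\emph{Identification of $\tilde J$ on $\{\tilde u > 0\}$.} Fix $r > 0$ and let $A_r := \{(t, x) \in [0, T) \times \TTT_L : \tilde u(t, x) > 2r\}$, which is open by continuity of $\tilde u$. The $\tilde\PP$-a.s.\ uniform convergence $\tilde v_N \to \tilde u$ from Corollary~\ref{cor:linf_convergence} supplies a random $N_0(\omega, r)$ such that $\tilde v_N > r$ throughout $A_r$ for $N \ge N_0$, so that the first part yields $\partial_x^3 \tilde v_N = \tilde J_N / \tilde v_N^2$ on $A_r$. For arbitrary $\eta \in C_c^\infty(A_r)$ I would evaluate $\int_{A_r} \eta \, \partial_x^3 \tilde v_N \, \d x \, \d t$ in two ways: integration by parts combined with uniform convergence of $\tilde v_N$ gives the limit $-\int_{A_r} (\partial_x^3 \eta) \, \tilde u \, \d x \, \d t$, which defines the distribution $\partial_x^3 \tilde u$ on $A_r$; substituting $\tilde J_N / \tilde v_N^2$ and pairing the compactly supported uniform convergence $\eta / \tilde v_N^2 \to \eta / \tilde u^2$ with the weak $L^2$-convergence $\tilde J_N \rightharpoonup \tilde J$ gives the limit $\int_{A_r} (\eta / \tilde u^2) \, \tilde J \, \d x \, \d t$. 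Comparison identifies $\partial_x^3 \tilde u = \tilde J / \tilde u^2$ on $A_r$; since $\tilde u > 2r$ and $\tilde J \in L^2$, the right-hand side, and hence $\partial_x^3 \tilde u$, belongs to $L^2(A_r)$. Letting $r > 0$ be arbitrary yields $\partial_x^3 \tilde u \in L^2(\{\tilde u > r\})$ for every $r > 0$ and $\tilde J = \tilde u^2 \, \partial_x^3 \tilde u$ on $\{\tilde u > 0\}$.

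\emph{Vanishing on $\{\tilde u = 0\}$ and main obstacle.} The uniform bound $\tilde\EE \int_0^T \int \tilde v_N^2 \, (\partial_x^3 \tilde v_N)^2 \, \d x \, \d t \le C$ from \eqref{apriori} (transferred via \eqref{equivalence_ujs}) implies by Fatou's lemma that $K(\omega) := \liminf_N \int_0^T \int \tilde v_N^2 \, (\partial_x^3 \tilde v_N)^2 \, \d x \, \d t$ is $\tilde\PP$-a.s.\ finite. Since $\chi_{\{\tilde u \le r\}} \le \chi_{\{\tilde v_N \le 2r\}}$ once $N \ge N_0(\omega, r)$, the first part delivers the pathwise estimate
\[
\vertii{\chi_{\{\tilde u \le r\}} \tilde J_N}_{L^2([0,T) \times \TTT_L)}^2 \le 4 r^2 \int_0^T \int \tilde v_N^2 \, (\partial_x^3 \tilde v_N)^2 \, \d x \, \d t.
\]
Lower semicontinuity of the $L^2$-norm under the weak convergence $\tilde J_N \rightharpoonup \tilde J$ then gives $\vertii{\chi_{\{\tilde u \le r\}} \tilde J}_{L^2}^2 \le 4 r^2 K(\omega)$, and sending $r \searrow 0$ yields $\tilde J = 0$ on $\{\tilde u = 0\}$, completing the identity $\tilde J = \chi_{\{\tilde u > 0\}} \tilde u^2 \, \partial_x^3 \tilde u$. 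The principal subtlety throughout is that $\partial_x^3 \tilde u$ has no intrinsic meaning on or across the zero set $\{\tilde u = 0\}$, so the argument must be localized to the open superlevel sets $A_r$; this in turn hinges on the \emph{pathwise} (rather than merely mean-square) uniform convergence furnished by Corollary~\ref{cor:linf_convergence}, which is what allows the random threshold $N_0(\omega, r)$ to be extracted at all.
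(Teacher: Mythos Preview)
Your argument is correct and follows the same three-step outline as the paper (identify $\tilde J_N$, identify $\tilde J$ on $\{\tilde u>0\}$, show $\tilde J=0$ on $\{\tilde u=0\}$), but the implementation of the last two steps is genuinely different.

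For the identification on $\{\tilde u>0\}$, the paper works in $L^2([0,1]\times[0,T)\times\TTT_L)$: it uses the energy-dissipation bound to extract, via compactness and a diagonal argument, a further weak limit $\tilde\eta$ of $\partial_x^3\tilde v_N\,\ind_{\{\tilde u>r\}}$ and then identifies $\tilde\eta=\partial_x^3\tilde u$ by integrating by parts. You instead stay pathwise and simply write $\partial_x^3\tilde v_N=\tilde J_N/\tilde v_N^2$ on $A_r$, pairing the strong convergence $\eta/\tilde v_N^2\to\eta/\tilde u^2$ against the already-available weak limit $\tilde J_N\rightharpoonup\tilde J$; this avoids extracting an additional subsequence altogether.

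For the zero set, the paper proves convergence of $\int_{\{\tilde v_N>0\}}\tilde v_N^2(\partial_x^3\tilde v_N)\,\partial_x\tilde\phi$ in $L^1(\tilde\Omega)$ by a splitting into $\{\tilde u>r\}$ and $\{\tilde u\le r\}$ and repeated appeals to Vitali's theorem, obtaining an $O(r)$ remainder. You instead bound $\|\chi_{\{\tilde u\le r\}}\tilde J\|_2^2\le 4r^2K(\omega)$ directly from weak lower semicontinuity and the pathwise-a.s.\ finiteness of $K(\omega)=\liminf_N\int\tilde v_N^2(\partial_x^3\tilde v_N)^2$ furnished by Fatou. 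This is shorter and yields the slightly stronger statement $\tilde J=0$ on $\{\tilde u=0\}$ without passing through test functions. The price is that the paper's longer argument simultaneously establishes the convergence $\int_{\{\tilde v_N>0\}}\tilde v_N^2(\partial_x^3\tilde v_N)\,\partial_x\tilde\phi\to\int_{\{\tilde u>0\}}\tilde u^2(\partial_x^3\tilde u)\,\partial_x\tilde\phi$ in expectation, which is reused in the proof of Lemma~\ref{lem:conv_terms} when recovering the SPDE; your route proves Proposition~\ref{prop:identify_limit_2} more efficiently but leaves that convergence to be argued separately later.
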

\begin{proof}[Proof of Proposition~\ref{prop:identify_limit_2}]
Since by \eqref{equivalence_ujs} of Proposition~\ref{prop:point_convergence} the joint laws coincide, we have for any $\phi \in C^\infty([0,T] \times \TTT_L)$
\begin{align*}
0 &= \EE \verti{\int_0^T \int_0^L J_N \, \phi \, \d x \, \d t - \int_0^T \int_0^L \ind_{\{v_N(t,\cdot) > 0\}} v_N^2 \, (\partial_x^3 v_N) \, \phi \, \d x \, \d t} \\
&= \tilde\EE \verti{\int_0^T \int_0^L \tilde J_N \, \phi \, \d x \, \d t - \int_0^T \int_0^L \ind_{\{\tilde v_N(t,\cdot) > 0\}} \tilde v_N^2 \, (\partial_x^3 \tilde v_N) \, \phi \, \d x \, \d t},
\end{align*}
so that indeed $\tilde J_N = \ind_{\{\tilde v_N > 0\}} \tilde v_N^2 (\partial_x^3 \tilde v_N)$.

\medskip

Because of the a-priori estimate \eqref{apriori} of Proposition~\ref{prop:reg_vn_wn}, we have
\[
\tilde \EE \int_0^T \int_{\left\{\tilde v_N(t,\cdot) > 0\right\}} \tilde v_N^2 \left(\partial_x^3 \tilde v_N\right)^2 \d x \, \d t \le C \vertii{u_0}_{1,2}^2,
\]
where $C < \infty$ only depends on $T$. Hence, for fixed $r > 0$ we obtain
\begin{align*}
& \tilde \EE \int_0^T \int_0^L \left(\partial_x^3 \tilde v_N\right)^2 \ind_{\left\{\vertii{\tilde v_N - \tilde u}_{L^\infty([0,T) \times \TTT_L)} < \frac r 2\right\} \cap \left\{\tilde u > r\right\}} \d x \, \d t \\
& \quad \le \frac{4}{r^2} \, \tilde\EE \int_0^T \int_{\left\{\tilde v_N(t,\cdot) > \frac r 2\right\}} \tilde v_N^2 \, (\partial_x^3 \tilde v_N)^2 \, \d x \, \d t \le \frac{4 C}{r^2} \vertii{u_0}_{1,2}^2,
\end{align*}
so that upon taking a subsequence we obtain by compactness
\begin{equation}\label{partial_2_v_N}
\left(\partial_x^3 \tilde v_N\right) \ind_{\left\{\vertii{\tilde v_N - \tilde u}_{L^\infty([0,T) \times \TTT_L)} < \frac r 2\right\} \cap \left\{\tilde u > r\right\}} \rightharpoonup \tilde \eta \ind_{\left\{\tilde u > r\right\}} \quad \mbox{as} \quad N \to \infty
\end{equation}
in $L^2\left([0,1] \times [0,T) \times \TTT_L\right)$. Taking the limit as $r \searrow 0$, a diagonal-sequence argument implies that, up to taking another subsequence, \eqref{partial_2_v_N} holds true for any $r > 0$. Now, for $\tilde\zeta \in L^2\left([0,1];C^\infty([0,T] \times \TTT_L)\right)$ with $\supp_{(t,x) \in [0,T) \times \TTT_L} \tilde \zeta \Subset \left\{\tilde u > r\right\}$ for all $\omega \in [0,1]$, we have
\begin{align*}
\tilde \EE \int_0^T \int_{\left\{\tilde u(t,\cdot) > r\right\}} \tilde \eta \, \tilde \zeta \, \d x \, \d t &\leftarrow \tilde \EE \int_0^T \int_0^L \left(\partial_x^3 \tilde v_N\right) \ind_{\left\{\vertii{\tilde v_N - \tilde u}_{L^\infty([0,T) \times \TTT_L)} < \frac r 2\right\} \cap \left\{\tilde u > r\right\}} \, \tilde \zeta \, \d x \, \d t \\
&= - \tilde \EE \int_0^T \int_0^L \tilde v_N \, \ind_{\left\{\vertii{\tilde v_N - \tilde u}_{L^\infty([0,T) \times \TTT_L)} < \frac r 2\right\} \cap \left\{\tilde u > r\right\}} \, (\partial_x^3 \tilde \zeta) \, \d x \, \d t \\
&\rightarrow - \tilde \EE \int_0^T \int_{\left\{\tilde u(t,\cdot) > r\right\}} \tilde u \, (\partial_x^3 \tilde \zeta) \, \d x \, \d t
\end{align*}
as $N \to \infty$ for any $r > 0$ by using Vitali's convergence theorem in the last line. Application of the latter relies on \eqref{linf_convergence} of Corollary~\ref{cor:linf_convergence} and
\begin{eqnarray*}
\tilde\EE \int_0^T \int_0^L \verti{\tilde v_N}^{\frac 3 2} \verti{\partial_x^3 \tilde\zeta}^{\frac 3 2} \, \d x \, \d t &\le& \left(\tilde\EE \int_0^T \int_0^L \verti{\tilde v_N}^6 \d x \, \d t\right)^{\frac 1 4} \left(\tilde\EE \int_0^T \int_0^L (\partial_x^3 \tilde \zeta)^2 \, \d x \, \d t\right)^{\frac 3 4} \\
&\le& C \, T^{\frac 1 4} \, \left(\tilde\EE \esssup_{t \in [0,T)} \vertii{\tilde v_N(t,\cdot)}_{1,2}^6\right)^{\frac 1 4} \stackrel{\eqref{apriori}}{\le} C \vertii{u_0}_{1,2}^{\frac 3 2},
\end{eqnarray*}
where $C < \infty$ is independent of $N$ and H\"older's inequality and Proposition~\ref{prop:reg_vn_wn} have been used. Hence, we obtain $\tilde \eta = \partial_x^3 \tilde u$ distributionally on $\{\tilde u > 0\}$. For $\tilde\phi \in L^\infty\left([0,1] \times [0,T] \times \TTT_L\right)$ and $N$ sufficiently large, we may split up according to
\begin{eqnarray} \label{splitting}
\lefteqn{\tilde \EE \int_0^T \int_{\left\{\tilde v_N(t,\cdot) > 0\right\}} \tilde v_N^2 \, (\partial_x^3 \tilde v_N) \, \tilde \phi \, \d x \, \d t} \\
&=& \tilde \EE \int_0^T \int_0^L \tilde v_N^2 \, (\partial_x^3 \tilde v_N) \, \ind_{\left\{\vertii{\tilde v_N - \tilde u}_{L^\infty([0,T) \times \TTT_L)} < \frac r 2\right\} \cap \left\{\tilde u > r\right\}} \, \tilde \phi \, \d x \, \d t \nonumber \\
&& + \tilde \EE \int_0^T \int_{\left\{\tilde v_N(t,\cdot) > 0\right\}} \tilde v_N^2 (\partial_x^3 \tilde v_N) \, \ind_{\left\{\vertii{\tilde v_N - \tilde u}_{L^\infty([0,T) \times \TTT_L)} \ge \frac r 2\right\} \cup \left\{\tilde u \le r\right\}} \, \tilde \phi \, \d x \, \d t. \nonumber
\end{eqnarray}
Since by Proposition~\ref{prop:reg_vn_wn} and Sobolev embedding
\[
\tilde\EE \int_0^T \int_0^L \left(\tilde v_N - \tilde u\right)^6 \d x \, \d t \le C \left(\tilde\EE\esssup_{t \in [0,T)} \vertii{\tilde v_N(t,\cdot)}_{1,2}^6 + \tilde\EE\esssup_{t \in [0,T)} \vertii{\tilde u(t,\cdot)}_{1,2}^6\right) \stackrel{\eqref{apriori}}{\le} C \vertii{u_0}_{1,2}^6
\]
and $\vertii{\tilde v_N - \tilde u}_{L^\infty\left([0,T) \times \TTT_L\right)} \to 0$ as $N \to \infty$, $\tilde\PP$-almost surely, by \eqref{linf_convergence} of Corollary~\ref{cor:linf_convergence}, it follows by Vitali's convergence theorem that
\begin{equation}\label{l2_strong}
\tilde\EE \int_0^T \int_0^L \left(\tilde v_N - \tilde u\right)^4 \d x \, \d t \to 0 \quad \mbox{as} \quad N \to \infty.
\end{equation}
Hence, we obtain
\begin{eqnarray}
\lefteqn{\tilde \EE \int_0^T \int_0^L \tilde v_N^2 \, (\partial_x^3 \tilde v_N) \, \ind_{\left\{\vertii{\tilde v_N - \tilde u}_{L^\infty([0,T) \times \TTT_L)} < \frac r 2\right\} \cap \left\{\tilde u > r\right\}} \, \tilde \phi \, \d x \, \d t} \label{split_converge_1} \\
&\to& \tilde \EE \int_0^T \int_{\left\{\tilde u(t,\cdot) > r \right\}} \tilde u^2 \, (\partial_x^3 \tilde u) \, \tilde \phi \, \d x \, \d t \quad \mbox{as} \quad N \to \infty \nonumber
\end{eqnarray}
because of \eqref{partial_2_v_N} and \eqref{l2_strong}. Furthermore,
\begin{eqnarray}\label{split_converge_2}
\lefteqn{\verti{\tilde \EE \int_0^T \int_{\left\{\tilde v_N(t,\cdot) > 0\right\}} \tilde v_N^2 \, (\partial_x^3 \tilde v_N) \, \ind_{\left\{\vertii{\tilde v_N - \tilde u}_{L^\infty([0,T) \times \TTT_L)} \ge \frac r 2\right\} \cup \left\{\tilde u \le r\right\}} \, \tilde \phi \, \d x \, \d t}} \\
&\le& C \, \vertii{\tilde \phi}_{L^\infty([0,1] \times [0,T) \times \TTT_L)} \nonumber \\
&& \times \left(\tilde \EE \int_0^T \int_{\left\{\tilde u(t,\cdot) \le r\right\} \cap \left\{\tilde v_N(t,\cdot) > 0\right\}} \tilde v_N^2 \, (\partial_x^3 \tilde v_N)^2 \, \d x \, \d t\right)^{\frac 1 2} \nonumber\\
&& \times \left(\tilde\EE \int_0^T \int_{\left\{\tilde v_N(t,\cdot) > 0\right\}} (\tilde v_N)^2 \ind_{\left\{\vertii{\tilde v_N - \tilde u}_{L^\infty([0,T) \times \TTT_L)} \ge \frac r 2\right\} \cup \{\tilde u \le r\}} \, \d x \, \d t\right)^{\frac 1 2} \nonumber\\
&\stackrel{\eqref{apriori}}{\le}& C \, \vertii{\tilde \phi}_{L^\infty([0,1] \times [0,T) \times \TTT_L)} \vertii{u_0}_{1,2} \nonumber \\
&& \times \left(\tilde\EE \int_0^T \int_{\left\{\tilde v_N(t,\cdot) > 0\right\}} (\tilde v_N)^2 \ind_{\left\{\vertii{\tilde v_N - \tilde u}_{L^\infty([0,T) \times \TTT_L)} \ge \frac r 2\right\} \cup \{\tilde u \le r\}} \, \d x \, \d t\right)^{\frac 1 2}, \nonumber
\end{eqnarray}
where $C < \infty$ is independent of $N$ and Proposition~\ref{prop:reg_vn_wn} has been applied. Now, we note that by Sobolev embedding
\begin{eqnarray*}
\lefteqn{\tilde\EE \int_0^T \int_{\left\{\tilde v_N(t,\cdot) > 0\right\}} (\tilde v_N)^4 \ind_{\left\{\vertii{\tilde v_N - \tilde u}_{L^\infty([0,T) \times \TTT_L)} \ge \frac r 2\right\} \cup \{\tilde u \le r\}} \, \d x \, \d t} \\
&\le& C \, \tilde\EE\esssup_{t \in [0,T)} \vertii{\tilde v_N(t,\cdot)}_{1,2}^4 \stackrel{\eqref{apriori}}{\le} C \vertii{u_0}_{1,2}^4,
\end{eqnarray*}
where $C < \infty$ is independent of $N$ and, so that by \eqref{linf_convergence} of Corollary~\ref{cor:linf_convergence} we have by Vitali's convergence theorem
\begin{align*}
& \tilde\EE \int_0^T \int_{\left\{\tilde v_N(t,\cdot) > 0\right\}} (\tilde v_N)^2 \ind_{\left\{\vertii{\tilde v_N - \tilde u}_{L^\infty([0,T) \times \TTT_L)} \ge \frac r 2\right\} \cup \{\tilde u \le r\}} \, \d x \, \d t \\
& \quad \to \tilde \EE \int_0^T \int_0^L \tilde u^2 \, \ind_{\{\tilde u \le r\}} \, \d x \, \d t = O\left(r^2\right) \quad \mbox{as} \quad N \to \infty
\end{align*}
and \eqref{split_converge_2} implies
\begin{equation}\label{split_converge_3}
\tilde \EE \int_0^T \int_{\left\{\tilde v_N(t,\cdot) > 0\right\}} \tilde v_N^2 \, (\partial_x^3 \tilde v_N) \, \ind_{\left\{\vertii{\tilde v_N - \tilde u}_{L^\infty([0,T) \times \TTT_L)} \ge \frac r 2\right\} \cup \left\{\tilde u \le r\right\}} \, \tilde \phi \, \d x \, \d t = O(r) \quad \mbox{as} \quad N \to \infty.
\end{equation}
The limits \eqref{split_converge_1} and \eqref{split_converge_3} in \eqref{splitting} lead to
\begin{eqnarray}\label{limit_sequence}
\lefteqn{\tilde\EE \int_0^T \int_{\left\{\tilde v_N(t,\cdot) > 0\right\}} \tilde v_N^2 \, (\partial_x^3 \tilde v_N) \, \tilde \phi \, \d x \, \d t} \\
&=& \tilde\EE \int_0^T \int_{\left\{\tilde u(t,\cdot) > r\right\}} \tilde u^2 \, (\partial_x^3 \tilde u) \, \tilde \phi \, \d x \, \d t + O(r) \quad \mbox{as} \quad N \to \infty \nonumber \\
&\rightarrow& \tilde\EE \int_0^T \int_{\left\{\tilde u(t,\cdot) > 0\right\}} \tilde u^2 \, (\partial_x^3 \tilde u) \, \tilde \phi \, \d x \, \d t \quad \mbox{as} \quad r \searrow 0. \nonumber
\end{eqnarray}
The last step follows by dominated convergence, where we have employed that the integrand is absolutely integrable. The latter follows from
\[
\tilde u^2 \verti{\partial_x^3 \tilde u} \verti{\tilde\phi} \le \tilde u^2 \verti{\partial_x^3 \tilde u} \vertii{\tilde\phi}_{L^\infty([0,1] \times [0,T) \times \TTT_L)}
\]
and the fact that by monotone convergence, the first two lines of \eqref{limit_sequence}, and the Sobolev embedding theorem,
\begin{eqnarray*}
\lefteqn{\tilde\EE \int_0^T \int_{\left\{\tilde u(t,\cdot) > 0\right\}} \tilde u^2 \verti{\partial_x^3 \tilde u} \d x \, \d t} \\
&=& \lim_{r \searrow 0} \tilde\EE \int_0^T \int_{\left\{\tilde u(t,\cdot) > r\right\}} \tilde u^2 \, (\partial_x^3 \tilde u) \left(\ind_{\{\partial_x^3 \tilde u > 0\}} - \ind_{\{\partial_x^3 \tilde u < 0\}}\right) \d x \, \d t \\
&=& \lim_{N \to \infty} \tilde\EE \int_0^T \int_{\left\{\tilde v_N(t,\cdot) > 0\right\}} \tilde v_N^2 \, (\partial_x^3 \tilde v_N) \left(\ind_{\{\partial_x^3 \tilde u > 0\}} - \ind_{\{\partial_x^3 \tilde u < 0\}}\right) \d x \, \d t \\
&\le& C \lim_{N \to \infty} \left(\tilde\EE \int_0^T \vertii{\tilde v_N(t,\cdot)}_{1,2}^2 \int_{\left\{\tilde v_N(t,\cdot) > 0\right\}} \tilde v_N^2 \, (\partial_x^3 \tilde v_N)^2 \, \d x \, \d t\right)^{\frac 1 2} \stackrel{\eqref{apriori}}{\le} C \vertii{u_0}_{1,2}^2,
\end{eqnarray*}
where $C < \infty$ and Proposition~\ref{prop:reg_vn_wn} was used in the last step.

\medskip

From \eqref{limit_sequence} it follows that
\[
\tilde J_N = \ind_{\{\tilde v_N > 0\}} \, \tilde v_N^2 \, (\partial_x^3 \tilde v_N) \rightharpoonup \ind_{\{\tilde u > 0\}} \, \tilde u^2 \, (\partial_x^3 \tilde u) \quad \mbox{in $L^1\left([0,1] \times [0,T) \times \TTT_L\right)$ as $N \to \infty$},
\]
which together with $\tilde J_N \rightharpoonup \tilde J$ in $L^2\left([0,T) \times \TTT_L\right)$ as $N \to \infty$, $\tilde\PP$-almost surely, implies $\tilde J = \ind_{\{\tilde u > 0\}} \, \tilde u^2 \, (\partial_x^3 \tilde u)$.
\end{proof}
%

\subsection{Recovering the SPDE\label{sec:limit}}
From the scheme \ref{item:deterministic}--\ref{item:stochastic}--\ref{item:deterministic_stochastic} we deduce for $t \in [0,T)$ and recalling $\delta = \frac{T}{N+1}$
\begin{eqnarray*}
\lefteqn{\left(v_N(t,\cdot),\varphi\right)_2 - \left(u_0,\varphi\right)_2} \\
&\stackrel{\eqref{connection}}{=}& \left(v_N(t,\cdot),\varphi\right)_2 + \sum_{j = 1}^{\floor{\frac t \delta}} \left(- \left(v_N(j \delta,\cdot),\varphi\right)_2 + \lim_{t' \nearrow j \delta} \left(w_N(t',\cdot),\varphi\right)_2\right) \\
&& + \sum_{j = 1}^{\floor{\frac t \delta}} \left(\lim_{t' \nearrow j \delta} \left(v_N(t',\cdot),\varphi\right)_2 - \left(w_N((j-1)\delta,\cdot),\varphi\right)_2\right) - \left(v_N(0,\cdot),\varphi\right)_2 \\
&=& \left(v_N(t,\cdot),\varphi\right)_2 - \left(v_N\left(\floor{\tfrac t \delta} \delta,\cdot\right),\varphi\right)_2 \\
&& + \sum_{j = 1}^{\floor{\frac t \delta}} \left(\lim_{t' \nearrow j \delta}\left(v_N(t',\cdot),\varphi\right)_2 - \left(v_N((j-1)\delta,\cdot),\varphi\right)_2\right) \\
&& + \sum_{j = 1}^{\floor{\frac t \delta}} \left(\lim_{t' \nearrow j \delta}\left(w_N(t',\cdot),\varphi\right)_2 - \left(w_N((j-1)\delta,\cdot),\varphi\right)_2\right) \\
&\stackrel{\eqref{deterministic}, \eqref{stochastic}}{=}& \int_0^t \int_{\{v_N(t',\cdot) > 0\}} v_N^2 (\partial_x^3 v_N) \, (\partial_x \varphi) \, \d x \, \d t' \\
&& - \frac 1 2 \sum_{k \in \Z} \lambda_k^2 \int_0^{\floor{\frac t \delta} \delta} \left(\psi_k \partial_x \left(\psi_k w_N(t',\cdot)\right), \partial_x\varphi\right)_2 \d t' \\
&& - \sum_{k \in \Z} \lambda_k \int_0^{\floor{\frac t \delta} \delta} \left(\psi_k w_N(t',\cdot),\partial_x\varphi\right)_2 \d \beta^k(t'),
\end{eqnarray*}
$\tilde\PP$-almost surely, where $\varphi \in C^\infty(\TTT_L)$ is a test function. Note that equations~\eqref{deterministic} and \eqref{stochastic} follow rigorously from \eqref{test_tfe} of Theorem~\ref{th:tfe_weak} and \eqref{solution_formula} of Definition~\ref{def:sol_eps_0} tested against $\varphi$. Changing the stochastic basis to
\[
\left([0,1],\tilde\cF,\left(\tilde\cF_t\right)_{t \in [0,T)},\tilde\PP\right),
\]
we obtain for the in law equivalent convergent subsequences $\tilde u_N$, $\tilde v_N$, and $\tilde w_N$ for $t \in [0,T)$ by taking \eqref{def_noise}, \eqref{def_concat}, \eqref{def_noise_n}, and \eqref{def_beta_k} into account,
\begin{eqnarray}\label{eq_step_n}
\left(\tilde v_N(t,\cdot),\varphi\right)_2 - \left(u_0,\varphi\right)_2 &=& \int_0^t \int_{\left\{\tilde v_N(t',\cdot) > 0\right\}} \tilde v_N^2 \, (\partial_x^3 \tilde v_N) \, \partial_x\varphi \, \d x \, \d t' \\
&& - \frac 1 2 \sum_{k \in \Z} \lambda_k^2 \int_0^{\floor{\frac t \delta} \delta} \left(\psi_k \partial_x \left(\psi_k \tilde w_N(t',\cdot)\right),\partial_x \varphi\right)_2 \d t' \nonumber\\
&& - \sum_{k \in \Z} \lambda_k \int_0^{\floor{\frac t \delta} \delta} \left(\psi_k \tilde u_N(t',\cdot),\partial_x \varphi\right)_2 \d\tilde\beta^k_N(t'). \nonumber
\end{eqnarray}
Passing to the limit as $N \to \infty$, we obtain the main result, Theorem~\ref{th:main}, by applying Propositions~\ref{prop:point_convergence}, \ref{prop:brownian}, and \ref{prop:identify_limit}, and showing that the different terms appearing in \eqref{eq_step_n} converge in the sense stated in the next lemma:
\begin{lemma}\label{lem:conv_terms}
Assume that $\tilde u_N$, $\tilde v_N$, $\tilde w_N$, $\tilde u$, $\tilde v$, and $\tilde w$ are given as in Proposition~\ref{prop:point_convergence} and \ref{prop:identify_limit}. Then, for any $\varphi \in C^\infty(\TTT_L)$ and $t \in [0,T)$, and up to taking subsequences, we have
\begin{subequations}
\begin{align}
\left(\tilde v_N(t,\cdot),\varphi\right)_2 &\to \left(\tilde u(t,\cdot),\varphi\right)_2, \label{lim_eq_t1}\\
\int_0^t \int_{\left\{\tilde v_N(t',\cdot) > 0\right\}} \tilde v_N^2 \left(\partial_x^3 \tilde v_N\right) (\partial_x\varphi) \, \d x \, \d t' &\to \int_0^t \int_{\{\tilde u(t',\cdot) > 0\}} \tilde u^2 \left(\partial_x^3 \tilde u\right) (\partial_x\varphi) \, \d x \, \d t', \label{lim_eq_t2} \\
\sum_{k \in \Z} \lambda_k^2 \int_0^{\floor{\frac t \delta} \delta} \left(\psi_k \partial_x \left(\psi_k \tilde w_N(t',\cdot)\right),\partial_x \varphi\right)_2 \d t' &\to \sum_{k \in \Z} \lambda_k^2 \int_0^t \left(\psi_k \partial_x \left(\psi_k \tilde u(t',\cdot)\right),\partial_x \varphi\right)_2 \d t', \label{lim_eq_t3}\\
\sum_{k \in \Z} \lambda_k \int_0^{\floor{\frac t \delta} \delta} \left(\psi_k \tilde u_N(t',\cdot),\partial_x \varphi\right)_2 \d\tilde\beta^k_N(t') &\to \sum_{k \in \Z} \lambda_k \int_0^t \left(\psi_k \tilde u(t',\cdot),\partial_x \varphi\right)_2 \d\tilde\beta^k(t') \label{lim_eq_t4}
\end{align}
\end{subequations}
as $N \to \infty$, $\tilde\PP$-almost surely.
\end{lemma}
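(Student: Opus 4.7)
The four limits decompose into three routine passes---\eqref{lim_eq_t1}, \eqref{lim_eq_t2}, \eqref{lim_eq_t3}---and one delicate stochastic passage \eqref{lim_eq_t4}. For \eqref{lim_eq_t1}, Corollary~\ref{cor:linf_convergence} directly yields $\tilde v_N \to \tilde u$ uniformly on $[0,T) \times \TTT_L$ $\tilde\PP$-almost surely, so testing against $\varphi$ gives the convergence pointwise in $\omega$. For \eqref{lim_eq_t2}, the weak $L^2$-convergence $\tilde J_N(\omega) \rightharpoonup \tilde J(\omega)$ from Proposition~\ref{prop:point_convergence}, combined with the identifications $\tilde J_N = \chi_{\{\tilde v_N > 0\}} \tilde v_N^2 (\partial_x^3 \tilde v_N)$ and $\tilde J = \chi_{\{\tilde u > 0\}} \tilde u^2 (\partial_x^3 \tilde u)$ from Proposition~\ref{prop:identify_limit_2}, applied to the fixed $L^2$-function $\chi_{[0,t]}(t') \, (\partial_x\varphi)(x)$, immediately gives the desired limit.

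For \eqref{lim_eq_t3} I would split $\psi_k \partial_x(\psi_k \tilde w_N) = \psi_k^2 (\partial_x \tilde w_N) + \psi_k (\partial_x \psi_k) \tilde w_N$. The second summand converges thanks to the uniform convergence $\tilde w_N \to \tilde u$ of Corollary~\ref{cor:linf_convergence}; the first by extracting a further sub-subsequence along which $\partial_x \tilde w_N(\omega) \stackrel{*}{\rightharpoonup} \partial_x \tilde u(\omega)$ in $L^\infty([0,T); L^2(\TTT_L))$, whose existence is guaranteed pointwise in $\omega$ by \eqref{weak_h1} of Proposition~\ref{prop:identify_limit} together with the uniform bound~\eqref{apriori}. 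Interchanging limit and summation over $k$ is justified by $\sum_{k \in \Z} \lambda_k^2 < \infty$ together with the uniform bound $\vertii{\psi_k}_\infty + \vertii{\partial_x \psi_k}_\infty \le C$ read off from \eqref{basis}; the gap between $\floor{t/\delta}\delta$ and $t$ contributes an integral over an interval of length at most $\delta$, bounded by $C \delta \, \vertii{\tilde w_N}_{L^\infty([0,T); H^1(\TTT_L))} \to 0$ as $N \to \infty$.

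The genuinely delicate point is \eqref{lim_eq_t4}, since the integrator $\tilde\beta^k_N$ also varies with $N$. The crux is first to invoke the time change $s = 2 t' - j \delta$ on each stochastic half-interval $\left[(j-\tfrac12)\delta, j\delta\right)$: by \eqref{def_noise_n} and \eqref{def_concat} this maps $\d\tilde\beta^k_N(t')$ to $\d\tilde\beta^k(s)$ and $\tilde u_N(t')$ to $\tilde w_N(s)$, while on each deterministic half-interval $\tilde\beta^k_N$ is frozen so that the integrator vanishes identically. Summing over the complete steps $j = 1, \ldots, \floor{t/\delta}$ produces the identity
\begin{equation*}
\sum_{k \in \Z} \lambda_k \int_0^{\floor{\frac t \delta}\delta} \left(\psi_k \tilde u_N(t',\cdot), \partial_x\varphi\right)_2 \d\tilde\beta^k_N(t') = \sum_{k \in \Z} \lambda_k \int_0^{\floor{\frac t \delta}\delta} \left(\psi_k \tilde w_N(s,\cdot), \partial_x\varphi\right)_2 \d\tilde\beta^k(s),
\end{equation*}
reducing the problem to a single, $N$-independent family of integrators, namely the limiting Wiener processes of Proposition~\ref{prop:brownian}.

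With this reformulation in hand, It\^o's isometry and mutual independence of the $\tilde\beta^k$ bound the $L^2(\tilde\Omega)$-distance of the left-hand side of \eqref{lim_eq_t4} from its target by
\begin{equation*}
C \sum_{k \in \Z} \lambda_k^2 \vertii{\psi_k}_\infty^2 \vertii{\partial_x\varphi}_2^2 \left(\tilde\EE \int_0^T \vertii{\tilde w_N - \tilde u}_2^2 \, \d s + \delta \, \tilde\EE \esssup_{t \in [0,T)} \vertii{\tilde u(t,\cdot)}_2^2\right),
\end{equation*}
where the first expectation vanishes by Corollary~\ref{cor:linf_convergence} and dominated convergence (with integrable majorant supplied by \eqref{apriori} and \eqref{bound_u_space_time}), the second vanishes since $\delta = T/(N+1) \to 0$, and the $k$-sum is finite by $\sum_k \lambda_k^2 < \infty$. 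This yields $L^2(\tilde\Omega)$-convergence, from which a further sub-subsequence produces the claimed $\tilde\PP$-almost sure convergence in \eqref{lim_eq_t4}, circumventing any general Kurtz--Protter-type theorem for stochastic integrals against varying integrators.
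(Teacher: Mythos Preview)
Your arguments for \eqref{lim_eq_t1} and \eqref{lim_eq_t2} are correct; in fact your use of the pointwise-in-$\omega$ weak $L^2$-convergence $\tilde J_N(\omega)\rightharpoonup\tilde J(\omega)$ tested against $\chi_{[0,t]}\,\partial_x\varphi$ is more direct than the paper's route. For \eqref{lim_eq_t3} there is a small gap: the pointwise-in-$\omega$ weak-$*$ convergence $\partial_x\tilde w_N(\omega)\stackrel{*}{\rightharpoonup}\partial_x\tilde u(\omega)$ does not follow from the results you cite, since \eqref{weak_h1} is a weak-$*$ statement in $L^p([0,1];X)$ rather than pointwise in $\omega$, and the bound \eqref{apriori} controls only $\sup_N\tilde\EE\,\esssup_t\|\tilde w_N\|_{1,2}^p$, not $\tilde\EE\,\sup_N\esssup_t\|\tilde w_N\|_{1,2}^p$; hence for fixed $\omega$ the $H^1$-norms need not be bounded in $N$. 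The fix is immediate: integrate by parts once more to write $(\psi_k\partial_x(\psi_k\tilde w_N),\partial_x\varphi)_2=-(\tilde w_N,\psi_k\partial_x(\psi_k\partial_x\varphi))_2$, after which only the uniform convergence of Corollary~\ref{cor:linf_convergence} is needed. This is precisely how the paper proceeds.

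The substantive issue is \eqref{lim_eq_t4}. Your time-change identity is formally correct, but the subsequent It\^o-isometry step is not justified: to expand $\tilde\EE\big|\int_0^{\cdot}(\psi_k(\tilde w_N-\tilde u),\partial_x\varphi)_2\,\d\tilde\beta^k\big|^2$ you need both integrands adapted to a \emph{single} filtration with respect to which $\tilde\beta^k$ is a Wiener process. Proposition~\ref{prop:brownian} supplies this for $\tilde u$ with the limit filtration $(\tilde\cF_t)$, and the equality of laws in Proposition~\ref{prop:point_convergence} supplies it for $\tilde w_N$ with an $N$-dependent filtration; but there is no reason $\tilde w_N$ should be $(\tilde\cF_t)$-adapted, and the Jakubowski construction gives no information about the \emph{joint} law of $((\tilde u_N)_{N\in\N},\tilde u,\tilde W)$ that would let you enlarge the filtration. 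In particular the cross-term $\tilde\EE\big[\big(\int g_N\,\d\tilde\beta^k\big)\big(\int g\,\d\tilde\beta^k\big)\big]$ cannot be evaluated. This is exactly the obstacle that the paper's martingale-identification argument is designed to bypass: it defines $\tilde M_{N,\varphi}$ via the second expression in \eqref{def_mart_n}, i.e.\ as a pathwise functional of $\tilde v_N,\tilde w_N$ with no stochastic integral, passes to the limit using \eqref{lim_eq_t1}--\eqref{lim_eq_t3} already established, and then identifies the limit $\tilde M_\varphi$ as the desired It\^o integral against $(\tilde\beta^k)_{k}$ by verifying the three moment identities \eqref{id_mart} and invoking a representation theorem. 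This approach never requires placing $\tilde w_N$ and $\tilde u$ in a common filtration.
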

\begin{proof}[Proof of Lemma~\ref{lem:conv_terms}]
We prove each limit separately:
\proofstep{Proof of \eqref{lim_eq_t1}}
Since by \eqref{linf_convergence} of Corollary~\ref{cor:linf_convergence} we have $\vertii{\tilde v_N - \tilde u}_{L^\infty([0,T) \times \TTT_L)} \to 0$ as $N \to \infty$, $\tilde\PP$-almost surely, and $\tilde v_N$ is $\tilde\PP$-almost surely piece-wise continuous in time (cf.~Theorem~\ref{th:tfe_weak}~\eqref{item:bf1}), it holds
\[
\vertii{\tilde v_N - \tilde u}_{BC^0([0,T) \times \TTT_L)} = \vertii{\tilde v_N - \tilde u}_{L^\infty([0,T) \times \TTT_L)} \to 0 \quad \mbox{as} \quad N \to \infty.
\]
Hence, we obtain by bounded convergence that $\left(\tilde v_N(t,\cdot),\varphi\right)_2 \to \left(\tilde u(t,\cdot),\varphi\right)_2$ as $N \to \infty$ for $t \in [0,T)$, $\tilde \PP$-almost surely, proving \eqref{lim_eq_t1}.

\proofstep{Proof of \eqref{lim_eq_t2}}
The limit \eqref{lim_eq_t2} immediately follows from the weak convergence of the flux density $\tilde J_N$ stated in Proposition~\ref{prop:point_convergence}, i.e., $\tilde J_N = \ind_{\left\{\tilde v_N > 0\right\}}\tilde v_N^2 \left(\partial_x^3 v_N\right) \rightharpoonup \tilde J$ in $L^2\left([0,T) \times \TTT_L\right)$, $\PP$-almost surely, and the identification of the limit $\tilde J = \ind_{\left\{\tilde u > 0\right\}}\tilde u^2 \left(\partial_x^3 \tilde u\right)$ given in Proposition~\ref{prop:identify_limit_2}.

\proofstep{Proof of \eqref{lim_eq_t3}}
We have by \eqref{basis}, \eqref{der_psi_k}, \eqref{cond_lambda},
\eqref{linf_convergence} of Corollary~\ref{cor:linf_convergence}, and bounded convergence,
\begin{align*}
& \sum_{k \in \Z} \lambda_k^2 \int_0^{\floor{\frac t \delta} \delta} \left(\psi_k \partial_x \left(\psi_k \tilde w_N(t',\cdot)\right),\partial_x \varphi\right)_2 \d t' \\
&\quad = - \sum_{k \in \Z} \lambda_k^2 \int_0^{\floor{\frac t \delta} \delta} \left(\tilde w_N(t',\cdot),\psi_k \partial_x \left(\psi_k \partial_x \varphi\right)\right)_2 \d t' \\
& \quad \to - \sum_{k \in \Z} \lambda_k^2 \int_0^t \left(\tilde u(t',\cdot),\psi_k \partial_x \left(\psi_k \partial_x \varphi\right)\right)_2 \d t' \quad \mbox{as} \quad N \to \infty, \quad \mbox{$\tilde \PP$-almost surely} \\
& \quad = \sum_{k \in \Z} \lambda_k^2 \int_0^t \left(\psi_k \partial_x \left(\psi_k \tilde u(t',\cdot)\right),\partial_x \varphi\right)_2 \d t',
\end{align*}
proving \eqref{lim_eq_t3}.

\proofstep{Proof of \eqref{lim_eq_t4}}
For $\varphi \in C^\infty(\TTT_L)$ and $t \in [0,T)$, we define 
\begin{eqnarray}\label{def_mart_n}
M_{N,\varphi}(t) &:=& - \sum_{k \in \Z} \lambda_k \int_0^{\floor{\frac t \delta} \delta} \left(\psi_k \tilde u_N(t',\cdot),\partial_x \varphi\right)_2 \d\tilde\beta^k_N(t') \\
&\stackrel{\eqref{eq_step_n}}{=}& \left(\tilde v_N(t,\cdot),\varphi\right)_2 - \left(u_0,\varphi\right)_2 - \int_0^t \int_{\{\tilde v_N(t',\cdot) > 0\}} \tilde v_N^2 (\partial_x^3 \tilde v_N) \, (\partial_x\varphi) \, \d x \, \d t' \nonumber \\
&& + \frac 1 2 \sum_{k \in \Z} \lambda_k^2 \int_0^{\floor{\frac t \delta} \delta} \left(\psi_k \partial_x \left(\psi_k \tilde w_N(t',\cdot)\right),\partial_x \varphi\right)_2 \d t'. \nonumber
\end{eqnarray}
Note that $\tilde u_N$ and $\tilde\beta_N^k$ are adapted to $\tilde\cF_{N,t} := \sigma\left(\tilde u_N(t',\cdot), \tilde W_N(t',\cdot) \colon 0 \le t' \le t\right)$ (We do not need to include $\tilde J_N$ in view of Proposition~\ref{prop:identify_limit_2}.). In view of \eqref{def_concat}, \eqref{def_noise_n}, Proposition~\ref{prop:point_convergence}, and \eqref{def_t_beta_k_N}, we obtain for the quadratic variation process
\begin{eqnarray*}
\qvar{\tilde M_{N,\varphi}}{t} &=& \sum_{k \in \Z} \lambda_k^2 \int_0^{\floor{\frac t \delta} \delta} \left(\psi_k \tilde w_N(t',\cdot),\partial_x\varphi\right)_2^2 \d t' \\
&\le& \vertii{\partial_x \varphi}_2^2 \left(\sum_{k \in \Z} \lambda_k^2 \vertii{\psi_k}_{L^\infty(\TTT_L)}^2\right) \int_0^{\floor{\frac t \delta} \delta} \vertii{\tilde w_N(t',\cdot)}_2^2 \d t' \\
&\stackrel{\eqref{basis}, \eqref{cond_lambda}}{\le}& C \vertii{\partial_x \varphi}_2^2 \int_0^{\floor{\frac t \delta} \delta} \vertii{\tilde w_N(t',\cdot)}_2^2 \d t',
\end{eqnarray*}
so that
\begin{eqnarray}\label{q_var_n}
\tilde\EE \left(\qvar{\tilde M_{N,\varphi}}{t}\right)^q &\le& C \, t^q \vertii{\partial_x \varphi}_2^{2q} \esssup_{t' \in \left[0,T\right)} \tilde \EE \vertii{\tilde w_N(t',\cdot)}_2^{2q} \\
&\stackrel{\eqref{apriori}}{\le}& C \, t^q \vertii{\partial_x \varphi}_2^{2q} \vertii{u_0}_{1,2}^{2q} \quad \mbox{for} \quad q \ge 1, \nonumber
\end{eqnarray}
where $C < \infty$ is independent of $N$ and Proposition~\ref{prop:reg_vn_wn} has been applied. Hence, $\tilde M_{N,\varphi}$ is a square-integrable martingale with respect to $(\tilde\cF_{N,t})_{t \in [0,T)}$. We know from \eqref{lim_eq_t1}--\eqref{lim_eq_t3} that, for all $t \in [0,T)$,
\begin{equation}\label{lim_m_1}
\begin{aligned}
\tilde M_{N,\varphi}(t) \to \tilde M_\varphi(t) &:= \left(\tilde u(t,\cdot),\varphi\right)_2 - \left(u_0,\varphi\right)_2 - \int_0^t \int_{\left\{\tilde u(t',\cdot) > 0\right\}} \tilde u^2 \, (\partial_x^3 \tilde u) \, (\partial_x\varphi) \, \d x \, \d t' \\
&\phantom{:=} + \frac 1 2 \sum_{k \in \Z} \lambda_k^2 \int_0^t \left(\psi_k \partial_x \left(\psi_k \tilde u(t',\cdot)\right),\partial_x \varphi\right)_2 \d t' \quad \mbox{as} \quad N \to \infty,
\end{aligned}
\end{equation}
$\tilde \PP$-almost surely. Then, it suffices to show that, for all $t \in [0,T)$,
\begin{equation}\label{lim_m_2}
\tilde M_\varphi(t) = - \sum_{k \in \Z} \lambda_k \int_0^t \left(\psi_k \tilde u(t',\cdot),\partial_x\varphi\right)_2 \d\tilde\beta^k(t').
\end{equation}
Since $\tilde M_{N,\varphi}$ is a square-integrable $(\tilde\cF_{N,t})$-martingale, we have for $0 \le t' \le t < T$, and
\[
\Phi \in C^0\left(\left.\XX_u\right|_{[0,t']} \times \left.\XX_W\right|_{[0,t']};[0,1]\right)
\]
as in \eqref{space_x_u} and \eqref{space_x_w} of Proposition~\ref{prop:point_convergence} (Again, it is not necessary to include $\XX_J$ because of Proposition~\ref{prop:identify_limit_2}.) the identities 
\begin{subequations}\label{id_mart_n}
\begin{align}
\tilde \EE \left[\left(\tilde M_{N,\varphi}(t) - \tilde M_{N,\varphi}(t')\right) \tilde \Phi_N\right] &= 0, \label{id_mart_n_1}\\
\tilde \EE \left[\left(\left(\tilde M_{N,\varphi}(t)\right)^2 - \left(\tilde M_{N,\varphi}(t')\right)^2 - \sum_{k \in \Z} \lambda_k^2 \int_{\floor{\frac{t'}{\delta}} \delta}^{\floor{\frac t \delta} \delta} \left(\psi_k \tilde w_N(t'',\cdot),\partial_x \varphi\right)_2^2 \d t'' \right) \tilde \Phi_N\right] &= 0, \label{id_mart_n_2}\\
\tilde \EE \left[\left(\tilde\beta^k_N(t) \tilde M_{N,\varphi}(t) - \tilde\beta^k_N(t') \tilde M_{N,\varphi}(t') + \lambda_k \int_{\floor{\frac{t'}{\delta}} \delta}^{\floor{\frac t \delta} \delta} \left(\psi_k \tilde w_N(t'',\cdot),\partial_x\varphi\right)_2 \d t''\right) \tilde \Phi_N\right] &= 0, \label{id_mart_n_3}
\end{align}
where
\begin{equation}
\tilde \Phi_N := \Phi\left(\left. \tilde u_N \right|_{[0,t']}, \left. \tilde W_N\right|_{[0,t']}\right).
\end{equation}
\end{subequations}
We derive below that, in the limit as $N \to \infty$, we have for $0 \le t' \le t < T$
\begin{subequations}\label{id_mart}
\begin{align}
\tilde \EE \left[\left(\tilde M_\varphi(t) - \tilde M_\varphi(t')\right) \tilde \Phi\right] &= 0, \label{id_mart_1}\\
\tilde \EE \left[\left(\left(\tilde M_\varphi(t)\right)^2 - \left(\tilde M_\varphi(t')\right)^2 - \sum_{k \in \Z} \lambda_k^2 \int_{t'}^t \left(\psi_k \tilde u(t'',\cdot),\partial_x \varphi\right)_2^2 \d t'' \right) \tilde \Phi\right] &= 0, \label{id_mart_2}\\
\tilde \EE \left[\left(\tilde\beta^k(t) \tilde M_\varphi(t) - \tilde\beta^k(t') \tilde M_\varphi(t') + \lambda_k \int_{t'}^t \left(\psi_k \tilde u(t'',\cdot),\partial_x\varphi\right)_2 \d t''\right) \tilde \Phi\right] &= 0, \label{id_mart_3}
\end{align}
where
\begin{equation}
\tilde \Phi = \Phi\left(\left. \tilde u \right|_{\left[0,t'\right]}, \left. \tilde W\right|_{\left[0,t'\right]}\right).
\end{equation}
\end{subequations}
With the same argumentation as in the proof of Proposition~\ref{prop:brownian}, we may then infer that $\tilde M_\varphi$ is also an $(\tilde\cF_t)$-martingale. Hence, \eqref{lim_m_2} follows from \eqref{basis}, \eqref{cond_lambda}, and \cite[Proposition~A.1]{Hofmanova2013} or \cite{KrylovRozovskii1979}.

\medskip

In order to prove \eqref{id_mart}, we note that
\begin{equation}\label{conv_phi_n}
\verti{\tilde\Phi_N} \le 1 \quad \mbox{and} \quad \tilde \Phi_N \to \tilde \Phi \quad \mbox{as $N \to \infty$ point-wise, $\tilde\PP$-almost surely.}
\end{equation}

\medskip

\proofstep{Argument for \eqref{id_mart_1}}
From \eqref{def_mart_n} and \eqref{id_mart_n_1} we deduce
\begin{align*}
0 &= \tilde \EE \left[\left( \left(\tilde v_N(t,\cdot) - \tilde v_N(t',\cdot),\varphi\right)_2 - \int_{t'}^t \int_{\left\{\tilde v_N(t'',\cdot) > 0\right\}} \tilde v_N^2 (\partial_x^3 \tilde v_N) (\partial_x \varphi) \, \d x \, \d t''\right) \tilde \Phi_N\right] \\
&\phantom{=} + \frac 1 2 \tilde \EE \left[ \left(\sum_{k \in \Z} \lambda_k^2 \int_{\floor{\frac{t'}{\delta}} \delta}^{\floor{\frac t \delta} \delta} \left(\psi_k \partial_x \left(\psi_k \tilde w_N(t'',\cdot)\right),\partial_x \varphi\right)_2 \d t''\right) \tilde \Phi_N\right].
\end{align*}
Then, we note that
\begin{equation}\label{pr_mart_1_1}
\tilde \EE \left[ \left(\tilde v_N(t,\cdot) - \tilde v_N(t',\cdot),\varphi\right)_2 \tilde \Phi_N\right] \to \tilde \EE \left[ \left(\tilde u(t,\cdot) - \tilde u(t',\cdot),\varphi\right)_2 \tilde \Phi\right] \quad \mbox{as} \quad N \to \infty.
\end{equation}
Indeed, from \eqref{linf_convergence} of Corollary~\ref{cor:linf_convergence} and piece-wise continuity in time by \eqref{def_concat}, we infer
\[
\vertii{\tilde v_N(t,\cdot) - \tilde v_N(t',\cdot) - \tilde u(t,\cdot) + \tilde u(t',\cdot)}_{L^\infty(\TTT_L)} \to 0 \quad \mbox{as} \quad N \to \infty,
\]
$\tilde \PP$-almost surely,
\[
\tilde \EE \left[\verti{\left(\tilde v_N(t,\cdot) - \tilde v_N(t',\cdot),\varphi\right)_2}^2 (\tilde \Phi_N)^2\right] \stackrel{\eqref{conv_phi_n}}{\le} 4 \esssup_{t'' \in [0,T)} \tilde \EE \vertii{\tilde v_N(t'',\cdot)}_2^2 \vertii{\varphi}_2^2 \stackrel{\eqref{apriori}}{\le} C \vertii{u_0}_{1,2}^2 \vertii{\varphi}_2^2,
\]
where $C < \infty$ is independent of $N$ and Proposition~\ref{prop:reg_vn_wn} has been applied, so that with \eqref{conv_phi_n} the claim \eqref{pr_mart_1_1} follows by Vitali's convergence theorem.

\medskip

We argue again by Vitali's convergence theorem to infer that
\begin{equation}\label{pr_mart_1_2}
\tilde \EE \left[\int_{t'}^t \int_{\left\{\tilde v_N(t'',\cdot) > 0\right\}} \tilde v_N^2 (\partial_x^3 \tilde v_N) (\partial_x \varphi) \, \d x \, \d t'' \, \tilde \Phi_N\right] \to \tilde \EE \left[\int_{t'}^t \int_{\left\{\tilde u(t'',\cdot) > 0\right\}} \tilde u^2 (\partial_x^3 \tilde u) (\partial_x \varphi) \, \d x \, \d t'' \, \tilde \Phi\right]
\end{equation}
as $N \to \infty$. Indeed, this follows from \eqref{lim_eq_t2}, \eqref{conv_phi_n}, and
\begin{eqnarray*}
\lefteqn{\tilde\EE \left[\verti{\int_{t'}^t \int_{\{\tilde v_N(t'',\cdot) > 0\}} \tilde v_N^2 \, (\partial_x^3 \tilde v_N) \, (\partial_x\varphi) \, \d x \, \d t''}^2 (\tilde\Phi_N)^2\right]} \\
&\le& C \vertii{\partial_x\varphi}_2^2 \, \tilde\EE \int_0^T \vertii{\tilde v_N(t'',\cdot)}_{1,2}^2 \int_{\{\tilde v_N(t'',\cdot) > 0\}} \tilde v_N^2 \, (\partial_x^3 \tilde v_N)^2 \, \d x \, \d t'' \\
&\stackrel{\eqref{apriori}}{\le}& C \vertii{\partial_x \varphi}_2^2 \vertii{u_0}_{1,2}^4,
\end{eqnarray*}
where $C < \infty$ is independent of $N$ and Proposition~\ref{prop:reg_vn_wn} has been applied.

\medskip

Finally, using \eqref{basis}, \eqref{der_psi_k_1}, \eqref{cond_lambda}, Proposition~\ref{prop:reg_vn_wn}, \eqref{linf_convergence} of Corollary~\ref{cor:linf_convergence}, \eqref{conv_phi_n}, and Vitali's convergence theorem, we have
\begin{align*}
& \tilde \EE \left[ \left(\sum_{k \in \Z} \lambda_k^2 \int_{\floor{\frac{t'}{\delta}} \delta}^{\floor{\frac t \delta} \delta} \left(\psi_k \partial_x \left(\psi_k \tilde w_N(t',\cdot)\right),\partial_x \varphi\right)_2 \d t''\right) \tilde \Phi_N\right] \\
& \quad = - \tilde \EE \left[ \left(\sum_{k \in \Z} \lambda_k^2 \int_{\floor{\frac{t'}{\delta}} \delta}^{\floor{\frac t \delta} \delta} \left(\tilde w_N(t',\cdot),\psi_k \partial_x \left(\psi_k \partial_x \varphi\right)\right)_2 \d t''\right) \tilde \Phi_N\right] \\
& \quad \to - \tilde \EE \left[ \left(\sum_{k \in \Z} \lambda_k^2 \int_{t'}^t \left(\tilde u(t',\cdot),\psi_k \partial_x \left(\psi_k \partial_x \varphi\right)\right)_2 \d t''\right) \tilde \Phi\right] \quad \mbox{as} \quad N \to \infty \\
& \quad \phantom{\to} = \EE \left[ \left(\sum_{k \in \Z} \lambda_k^2 \int_{t'}^t \left(\psi_k \partial_x \left(\psi_k \tilde u(t',\cdot)\right), \partial_x \varphi\right)_2 \d t''\right) \tilde \Phi\right].
\end{align*}

Altogether, we infer that taking the limit as $N \to \infty$ in \eqref{id_mart_n_1}, we may conclude that \eqref{id_mart_1} holds true.

\proofstep{Argument for \eqref{id_mart_2}}
First, we note that
\begin{eqnarray*}
\lefteqn{\tilde\EE \left(\sum_{k \in \Z} \lambda_k^2 \int_{\floor{\frac{t'}{\delta}} \delta}^{\floor{\frac t \delta} \delta} \left(\psi_k \tilde w_N(t'',\cdot),\partial_x\varphi\right)_2^2 \d t''\right)^2} \\
&\stackrel{\eqref{basis}, \eqref{cond_lambda}}{\le} C \vertii{\partial_x \varphi}_2^4 \, \esssup_{t'' \in [0,T)} \tilde\EE \vertii{\tilde w_N(t'',\cdot)}_2^4 \stackrel{\eqref{apriori}}{\le} C \vertii{\partial_x\varphi}_2^4 \vertii{u_0}_{1,2}^4,
\end{eqnarray*}
where $C < \infty$ is independent of $N$ and Proposition~\ref{prop:reg_vn_wn} has been utilized. Additionally, by \eqref{linf_convergence} of Corollary~\ref{cor:linf_convergence} and bounded convergence
\[
\sum_{k \in \Z} \lambda_k^2 \int_{\floor{\frac{t'}{\delta}} \delta}^{\floor{\frac t \delta} \delta} \left(\psi_k \tilde w_N(t'',\cdot),\partial_x\varphi\right)_2^2 \d t'' \to \sum_{k \in \Z} \lambda_k^2 \int_{t'}^t \left(\psi_k \tilde u(t'',\cdot),\partial_x\varphi\right)_2^2 \d t'' \quad \mbox{as $N \to \infty$,}
\]
$\tilde\PP$-almost surely. Together with \eqref{conv_phi_n} this implies
\[
\tilde\EE\left[\sum_{k \in \Z} \lambda_k^2 \int_{\floor{\frac{t'}{\delta}} \delta}^{\floor{\frac t \delta} \delta} \left(\psi_k \tilde w_N(t',\cdot),\partial_x\varphi\right)_2^2 \d t'' \, \tilde\Phi_N\right] \to \tilde\EE\left[\sum_{k \in \Z} \lambda_k^2 \int_{t'}^t \left(\psi_k \tilde u(t'',\cdot),\partial_x\varphi\right)_2^2 \d t'' \, \tilde\Phi\right]
\]
as $N \to \infty$ by Vitali's convergence theorem. Now, by \eqref{lim_m_1},
\[
\tilde M_{N,\varphi}(t) \to \tilde M_\varphi(t) \quad \mbox{and} \quad \tilde M_{N,\varphi}(t') \to \tilde M_\varphi(t') \quad \mbox{as $N \to \infty$, $\tilde\PP$-almost surely},
\]
and further applying the Burkholder-Davis-Gundy inequality (cf.~\cite[Theorem~3.28]{KaratzasShreve1991}) gives
\[
\tilde \EE \left[ \left(\tilde M_{N,\varphi}(t)\right)^4 (\tilde\Phi_N)^2\right] \stackrel{\eqref{conv_phi_n}}{\le} C \, \tilde \EE \qvar{\tilde M_{N,\varphi}}{t}^2 \stackrel{\eqref{q_var_n}}\le C \, t^2 \vertii{\partial_x \varphi}_2^4\vertii{u_0}_{1,2}^4,
\]
where $C < \infty$ is independent of $N$, so that by Vitali's convergence theorem
\[
\tilde\EE \left[\left(\tilde M_{N,\varphi}(t)\right)^2 \tilde\Phi_N\right] \to \tilde\EE \left[\left(\tilde M_\varphi(t)\right)^2 \tilde\Phi\right] \quad \mbox{and} \quad \tilde\EE \left[\left(\tilde M_{N,\varphi}(t')\right)^2 \tilde\Phi_N\right] \to \tilde\EE \left[\left(\tilde M_\varphi(t')\right)^2 \tilde\Phi\right]
\]
as $N \to \infty$, where \eqref{conv_phi_n} has been used once more. Therefore, \eqref{id_mart_2} follows by taking the limit as $N \to \infty$ in \eqref{id_mart_n_2}.

\proofstep{Argument for \eqref{id_mart_3}}
With the same reasoning as before, we have
\[
\tilde\EE\left[\int_{\floor{\frac{t'}{\delta}} \delta}^{\floor{\frac t \delta} \delta} \left(\psi_k \tilde w_N(t'',\cdot),\partial_x\varphi\right)_2 \d t'' \, \tilde\Phi_N\right] \to \tilde\EE\left[\int_{t'}^t \left(\psi_k \tilde u(t'',\cdot),\partial_x\varphi\right)_2 \d t'' \, \tilde\Phi\right] \quad \mbox{as} \quad N \to \infty.
\]
Furthermore, with help of the Cauchy-Schwarz and the Burkholder-Davis-Gundy inequality (cf.~\cite[Theorem~3.28]{KaratzasShreve1991})
\begin{eqnarray*}
\tilde\EE \left[(\tilde\beta^k_N(t))^2 \, (\tilde M_{N,\varphi}(t))^2 \, (\tilde\Phi_N)^2\right] &\stackrel{\eqref{conv_phi_n}}{\le}& \sqrt{\tilde \EE \left(\tilde\beta^k_N(t)\right)^4} \, \sqrt{\tilde \EE \left(\tilde M_{N,\varphi}(t)\right)^4} \\
&\le& C \, \sqrt{\tilde \EE \qvar{\tilde\beta^k_N}{t}^2} \, \sqrt{\tilde \EE \qvar{\tilde M_{N,\varphi}}{t}^2} \\
&\stackrel{\eqref{q_var_n}}{\le}& C \, t^2 \vertii{\partial_x \varphi}_2^2 \vertii{u_0}_{1,2}^2,
\end{eqnarray*}
where $C < \infty$ is independent of $N$, which implies with $\tilde\beta_N^k \to \tilde\beta_N$ as $N \to \infty$ uniformly in $[0,T)$, $\tilde\PP$-almost surely, by Proposition~\ref{prop:point_convergence} and \eqref{def_t_beta_k_N}, \eqref{conv_phi_n}, and $\tilde M_{N,\varphi}(t) \to \tilde M_\varphi(t)$ as $N \to \infty$, $\tilde\PP$-almost surely, by \eqref{lim_m_1}, the limits
\begin{align*}
\tilde\EE \left[\tilde\beta^k(t) \, \tilde M_{N,\varphi}(t) \, \tilde\Phi_N\right] &\to \tilde\EE \left[\tilde\beta^k(t) \, \tilde M_\varphi(t) \, \tilde\Phi\right], \\
\tilde\EE \left[\tilde\beta^k_N(t') \, \tilde M_{N,\varphi}(t') \, \tilde\Phi_N\right] &\to \tilde\EE \left[\tilde\beta^k(t') \, \tilde M_\varphi(t') \, \tilde\Phi\right]
\end{align*}
as $N \to \infty$ by Vitali's convergence theorem. Hence, \eqref{id_mart_3} follows from \eqref{id_mart_n_3}.
\end{proof}
%

\section{Concluding remarks\label{sec:conclude}}
The Trotter-Kato splitting scheme \ref{item:deterministic}--\ref{item:stochastic}--\ref{item:deterministic_stochastic}, utilized in the present work for the construction of solutions to \eqref{stfe}, can also be used for the design of a suitable numerical scheme. Hence, an interesting direction for future research may be to further develop the present analysis to prove the convergence of this or a similar numerical algorithm. A numerical treatment of the stochastic thin-film equation with It\^o noise and an additional interface potential has been introduced by Gr\"un, Mecke, and Rauscher in \cite[\S3.1]{GruenMeckeRauscher2006}. Furthermore, it may be of interest to test whether employing Stratonovich noise leads to different findings in the droplet formation simulations carried out in \cite{GruenMeckeRauscher2006}.

\medskip

It appears to be challenging to investigate the stochastic thin-film equation
\begin{equation}\label{stfe_general}
\d u = - \partial_x \left(u^n \partial_x^3 u\right) \d t + \partial_x \left(u^{\frac n 2} \circ \d W\right),
\end{equation}
where $n \in [1,3]$ and where the cubic mobility $n = 3$ (corresponding to no slip at the substrate) is of particular interest. In this case, however, the noise is nonlinear and singular for $n < 2$, so that for instance shocks in the stochastic dynamics may form. Hence, we expect the analysis in this situation to be significantly more involved. For relevant analysis in the case of the second-order SPDE
\[
\d u = \Delta u^m \, \d t + \nabla \cdot \left(u^p \circ \d W\right)
\]
we refer to the works \cite{DareiotisGess2018,GessFehrmann2019,GessSouganidis2015}.

\medskip

It should also be noted that, besides the weak solution approach, an extensive theory of classical solutions to the thin-film equation, based on maximal-regularity estimates of the linearized evolution, has been developed, starting with the works of Bringmann, Giacomelli, Kn\"upfer, and Otto \cite{GiacomelliKnuepferOtto2008,GiacomelliKnuepfer2010,BringmannGiacomelliKnuepferOtto2016} for linear mobility in one space dimension and with zero contact angle and later on further developed to include nonlinear mobilities, nonzero contact angles, and higher dimensions in \cite{Knuepfer2011,GiacomelliGnannKnuefperOtto2014,Knuepfer2015,Degtyarev2017,Knuepfer2019_err,John2015,Gnann2016,GnannPetrache2018}. On the other hand, there have been recent developments in the theory of mild solutions and maximal regularity for stochastic partial differential equations due to van Neerven, Veraar, and Weis \cite{VanNeervenVeraarWeis2012,VanNeervenVeraarWeis2012_2} and Hornung \cite{Hornung2019}. It would be a viable goal to combine these techniques in order to obtain a stronger control of the solution.

\medskip

Finally, it would be an illuminating task to study the self-similar behavior of the stochastic thin-film equation \eqref{stfe_general} analytically and thus to lift the numerical findings and dimensional analysis of Davidovitch, Moro, and Stone in \cite{DavidovitchMoroStone2005} to full mathematical rigor. Note that   again analytic results in the deterministic case have been obtained for the thin-film equation with linear mobility, starting with the works of Bernoff and Witelski in \cite{BernoffWitelski2002} and Carrillo and Toscani in \cite{CarrilloToscani2002} and later on upgraded in \cite{CarlenUlusoy2007,MatthesMcCannSavare2009,CarlenUlusoy2014,Gnann2015,McCannSeis2015,Seis2018}.

\medskip

We believe that all questions detailed above are interesting future directions, but appear to be analytically quite challenging to address.

\appendix

\section{Viscous regularization of stochastic dynamics\label{sec:viscous}}
Let $\left(\Omega,\cF,\left(\cF_t\right)_{t \in [0,\delta]},\PP\right)$ be a complete filtered probability space with a complete and right-continuous filtration $\left(\cF_t\right)_{t \in [0,\delta]}$. Further write $\left(\beta^k\right)_{k \in \Z}$ for mutually independent standard real-valued $(\cF_t)$-Wiener processes. Consider the viscous regularization
\begin{equation}\label{stochastic_regular}
\d w^\eps =  \left(\frac 1 2 \sum_{k \in \Z} \lambda_k^2 \partial_x\left(\psi_k\partial_x(\psi_k w^\eps)\right) + \eps \, \partial_x^2 w^\eps\right) \d t + \sum_{k \in \Z} \lambda_k \partial_x (\psi_k w^\eps) \, \d\beta^k \quad \mbox{on} \quad [0,\delta)
\end{equation}
of equation~\eqref{stochastic_3}, where $\eps \in (0,1]$. Our aim is to construct a variational solution to \eqref{stochastic_regular}. Therefore, we introduce the operators
\begin{equation}\label{a_op}
A^\eps \colon H^2(\TTT_L) \to L^2(\TTT_L), \quad w \mapsto \frac 1 2 \sum_{k \in \Z} \lambda_k^2 \partial_x\left(\psi_k\partial_x(\psi_k w)\right) + \eps \, \partial_x^2 w
\end{equation}
and the diagonal Hilbert-Schmidt-valued operator
\begin{equation}\label{b_op}
B \colon H^2(\TTT_L) \to L_2\left(H^2(\TTT_L);H^1(\TTT_L)\right), \quad w \mapsto \left(v \mapsto \sum_{k \in \Z} \lambda_k \left(v,\psi_k\right)_{2,2} \left(\partial_x (\psi_k w)\right)\right),
\end{equation}
Equation~\eqref{stochastic_regular} then attains the abstract form
\begin{equation}\label{stochastic_regular_abstract}
\d w^\eps = A^\eps w^\eps \, \d t + (B w^\eps) \, \d W_{H^2(\TTT_L)},
\end{equation}
where
\begin{equation}\label{def_wiener}
W_{H^2(\TTT_L)} := \sum_{k \in \Z} \beta^k \psi_k
\end{equation}
is a cylindrical $(\cF_t)$-Wiener process in $H^2(\TTT_L)$. The underlying Gelfand triple is
\[
\left(L^2(\TTT_L),H^1(\TTT_L),H^2(\TTT_L)\right).
\]
We use the following notion of solutions (see \cite[Definition~5.1.2]{LiuRoeckner2015}):
\begin{definition}\label{def:sol_eps}
A variational solution to \eqref{stochastic_regular_abstract} is a continuous $(\cF_t)$-adapted $H^1(\TTT_L)$-valued process $w^\eps$ such that
\[
\hat w^\eps \in L^2\left([0,\delta) \times \Omega, \d t \otimes \d\PP; H^2(\TTT_L)\right),
\]
where $\hat w^\eps$ denotes the $\d t \otimes \d\PP$-equivalence class of $w^\eps$, and
\begin{equation}\label{solution_formula_eps}
w^\eps(t,\cdot) = w_0 + \int_0^t A^\eps \bar w^\eps(t',\cdot) \, \d t' + \int_0^t (B \bar w^\eps(t',\cdot)) \, \d W_{H^2(\TTT_L)}(t',\cdot) \quad \mbox{for} \quad t \in [0,\delta),
\end{equation}
$\PP$-almost surely. Here, $\bar w^\eps$ denotes any $H^2(\TTT_L)$-valued progressively measurable (i.e., for any $t \in [0,\delta)$ the process $\bar w^\eps|_{[0,t] \times \Omega \times \TTT_L}$ is $\cB([0,t]) \otimes \cF_t \otimes \cB(\TTT_L)$-measurable) $\d t \otimes \d\PP$-version of $\hat w^\eps$.
\end{definition}
\begin{proposition}\label{prop:var}
Assume that \eqref{cond_lambda} holds true and that $p \in [2,\infty)$. Then, for any $w_0 \in L^p\left(\Omega,\cF_0,\PP;H^1(\TTT_L)\right)$, equation~\eqref{stochastic_regular} has a unique variational solution $w^\eps$ with initial value $w_0$ satisfying
\begin{equation}\label{regularity_variational}
\EE\left(\sup_{t \in [0,\delta)} \vertii{w^\eps(t,\cdot)}_{1,2}^p + \int_0^\delta \vertii{w^\eps(t,\cdot)}_{2,2}^2 \, \d t\right) < \infty.
\end{equation}
Furthermore, we have the a-priori estimates
\begin{subequations}\label{apriori_w_eps}
\begin{align}
\EE \sup_{t \in [0,\delta)} \vertii{w^\eps(t,\cdot)}_{1,2}^p &\le C_1 \, \EE\vertii{w_0}_{1,2}^p, \label{apriori_w_eps_1} \\ 
\lim_{t \nearrow \delta} \EE \vertii{\partial_x w^\eps(t,\cdot)}_2^p &\le e^{C_2 \delta} \left(\EE \vertii{\partial_x w_0}_2^p + C_3 \, \delta \, \EE \verti{\int_0^L w_0 \, \d x}^p\right), \label{apriori_w_eps_2}
\end{align}
\end{subequations}
where $C_1, C_2, C_3 < \infty$ are independent of $\eps$, $\delta$, $w^\eps$, and $w_0$.
\end{proposition}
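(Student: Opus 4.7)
The plan is to invoke the classical variational framework of Krylov--Rozovskii (\cite{KR82}; see also \cite[Chapter~4]{Roeckner2013}) applied to the Gelfand triple $V \hookrightarrow H \hookrightarrow V^*$ with $V = H^2(\TTT_L)$, $H = H^1(\TTT_L)$, and $V^* = L^2(\TTT_L)$, where the duality pairing extends the $H^1$-inner product and on the torus reads $\dualpair{f}{v}_{V^*,V} = (f, v - \partial_x^2 v)_2$ for $f \in L^2(\TTT_L)$ and $v \in H^2(\TTT_L)$. Since $A^\eps$ and $B$ are linear and continuous (using \eqref{basis}, \eqref{der_psi_k}, and \eqref{cond_lambda} to sum in $k$), hemicontinuity, boundedness of the form $\vertii{A^\eps v}_{V^*} \le C(1 + \vertii{v}_V)$, and the reduction of weak monotonicity to self-coercivity applied to differences of solutions are all immediate. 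The delicate point is coercivity, which will also drive the subsequent energy estimates.

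The coercivity estimate I aim at is
\[
2\,\dualpair{A^\eps v}{v}_{V^*,V} + \vertii{Bv}_{L_2(H^2(\TTT_L);H^1(\TTT_L))}^2 \le -\eps \vertii{v}_{2,2}^2 + C \vertii{v}_{1,2}^2
\]
for $C < \infty$ independent of $\eps \in (0,1]$ and $v \in H^2(\TTT_L)$. Expanding the Hilbert--Schmidt norm as $\sum_{k \in \Z} \lambda_k^2 \vertii{\partial_x(\psi_k v)}_{1,2}^2$ (since $(\psi_k)_{k \in \Z}$ is an ONB of $H^2(\TTT_L)$) and the pairing as $(A^\eps v, v - \partial_x^2 v)_2$, the algebraic structure of the Stratonovich-to-It\^o correction encoded in $A^\eps$ produces a pointwise cancellation of the leading contributions $\sum_k \lambda_k^2 (\psi_k \partial_x v)^2$ and $\sum_k \lambda_k^2 (\psi_k \partial_x^2 v)^2$ against matching terms in $2 (A^\eps v, v - \partial_x^2 v)_2$ after integration by parts. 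The surviving commutator remainders all involve $\partial_x \psi_k, \partial_x^2 \psi_k, \partial_x^3 \psi_k$, and are bounded by $C \vertii{v}_{1,2}^2$ by virtue of \eqref{der_psi_k} and \eqref{cond_lambda}. The viscous term $\eps \partial_x^2 v$ yields $-2\eps \vertii{\partial_x^2 v}_2^2 - 2\eps \vertii{\partial_x v}_2^2$, which absorbs any residual top-order term by Young's inequality.

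With coercivity in place, the abstract theorem (e.g.\ \cite[Theorem~4.2.4]{Roeckner2013}) produces a unique variational solution $w^\eps$ satisfying \eqref{regularity_variational}. The estimate \eqref{apriori_w_eps_1} follows by applying It\^o's formula to $t \mapsto \vertii{w^\eps(t,\cdot)}_{1,2}^p$: the sum of the deterministic drift and the It\^o correction is controlled by the coercivity bound (discarding the favorable $-\eps \vertii{v}_{2,2}^2$ term), producing a term bounded by $C \int_0^t \vertii{w^\eps(t',\cdot)}_{1,2}^p \, \d t'$; the martingale part is estimated by the Burkholder--Davis--Gundy inequality and absorbed into the left-hand side, and Gronwall's lemma then closes the argument with a constant $C_1$ independent of $\eps$ and $\delta$. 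For \eqref{apriori_w_eps_2} I exploit that \eqref{stochastic_regular} is in divergence form, so mass is conserved: $\int_0^L w^\eps(t,\cdot) \, \d x = \int_0^L w_0 \, \d x$, $\PP$-almost surely. Decomposing $w^\eps = \bar w_0 + \tilde w^\eps$ with $\bar w_0 := L^{-1}\int_0^L w_0 \, \d x$ constant and $\tilde w^\eps$ of zero mean, It\^o's formula applied to $\vertii{\partial_x w^\eps(t,\cdot)}_2^p = \vertii{\partial_x \tilde w^\eps(t,\cdot)}_2^p$ and the same coercive cancellation now applied to $\tilde w^\eps$ yield a linear differential inequality for $\EE \vertii{\partial_x w^\eps(t,\cdot)}_2^p$ whose inhomogeneous source $\propto \left(\int_0^L w_0 \, \d x\right)^p$ arises from the coupling of the constant component $\bar w_0$ into the lower-order commutator terms; integrating produces the exponential factor $e^{C_2 \delta}$ and the $C_3 \delta$ prefactor.

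The main obstacle is the precise coercivity computation: one must carefully expand $\partial_x(\psi_k \partial_x(\psi_k v))$ and compare it term-by-term against $\vertii{\partial_x(\psi_k v)}_{1,2}^2$ up to two spatial derivatives of $v$, verifying that the top-order pieces cancel exactly (the algebraic manifestation of the Stratonovich structure) while all commutator remainders remain uniformly controlled by $C \vertii{v}_{1,2}^2$.
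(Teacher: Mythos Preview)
Your proposal is correct and follows essentially the same route as the paper: verify the Krylov--Rozovskii hypotheses on the Gelfand triple $H^2 \hookrightarrow H^1 \hookrightarrow L^2$, with the crucial coercivity coming from the Stratonovich cancellation (the paper isolates this as Lemma~\ref{lem:mon_coerc}, proving the $L^2$- and $\dot H^1$-level estimates \eqref{mon_coerc_1}--\eqref{mon_coerc_2} separately and summing), then derive \eqref{apriori_w_eps} via It\^o's formula, BDG and Gronwall.

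Two points where the paper is more careful than your sketch. First, for $p \in (2,4)$ the map $y \mapsto y^{p/2}$ fails to be $C^2$ at the origin, so It\^o's formula is applied only for $p \in \{2\} \cup [4,\infty)$ and the intermediate range is recovered by complex interpolation. Second, in \eqref{apriori_w_eps_2} the mass term does not enter through the coercivity drift --- \eqref{mon_coerc_2} applied to the full $w^\eps$ already bounds that by $C\vertii{\partial_x w^\eps}_2^2$ with no zero-mode contribution --- but through the second-order It\^o correction $\tfrac{p(p-2)}{2}\vertii{\partial_x w^\eps}_2^{p-4}\sum_k \lambda_k^2 \left(\partial_x^2(\psi_k w^\eps),\partial_x w^\eps\right)_2^2$, which after an integration by parts is bounded by $C\vertii{\partial_x w^\eps}_2^{p-2}\vertii{w^\eps}_{1,2}^2$; Poincar\'e together with mass conservation then produces the $(\int_0^L w_0\,\d x)^p$ source. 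Your mean/mean-free decomposition is therefore unnecessary.
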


A main ingredient for proving Proposition~\ref{prop:var} is the following lemma, for which the use of Stratonovich calculus (see the discussion in \S\ref{sec:ito_strat}) is essential:
\begin{lemma}[monotonicity and coercivity]\label{lem:mon_coerc}
Suppose \eqref{cond_lambda} holds true. Then, for $w \in H^2(\TTT_L)$ we have
\begin{subequations}\label{mon_coerc}
\begin{equation}\label{mon_coerc_1}
2 \dualpair{A^\eps w}{w} + \vertii{B w}_{L_2\left(H^2(\TTT_L);L^2(\TTT_L)\right)}^2 \le C \vertii{w}_2^2 - 2 \eps \vertii{w}_{1,2}^2
\end{equation}
and
\begin{equation}\label{mon_coerc_2}
2 \dualpair{\partial_x A^\eps w}{\partial_x w} + \vertii{B w}_{L_2\left(H^2(\TTT_L);\dot H^1(\TTT_L)\right)}^2 \le C \vertii{w}_{1,2}^2 - 2 \eps \vertii{\partial_x w}_{1,2}^2
\end{equation}
for some $C < \infty$ independent of $w$ and $\eps$, so that in particular
\begin{equation}\label{mon_coerc_3}
2 \Dualpair{A^\eps w}{w} + \vertii{B w}_{L_2\left(H^2(\TTT_L);H^1(\TTT_L)\right)}^2 \le C \vertii{w}_{1,2}^2 - 2 \eps \vertii{w}_{2,2}^2.
\end{equation}
\end{subequations}
\end{lemma}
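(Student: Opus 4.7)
My plan is to exploit the Stratonovich structure: the drift $\tfrac12\sum_k \lambda_k^2 \partial_x(\psi_k \partial_x(\psi_k\,\cdot\,))$ appearing in $A^\eps$ is engineered precisely to cancel the highest-derivative contribution of $\vertii{Bw}^2$ that is the It\^o--Stratonovich conversion term. Since $(\psi_k)_{k\in\Z}$ is orthonormal in $H^2(\TTT_L)$, one first computes $\vertii{Bw}_{L_2(H^2;L^2)}^2 = \sum_{k\in\Z} \lambda_k^2 \vertii{\partial_x(\psi_k w)}_2^2$ and $\vertii{Bw}_{L_2(H^2;\dot H^1)}^2 = \sum_{k\in\Z} \lambda_k^2 \vertii{\partial_x^2(\psi_k w)}_2^2$, so that \eqref{mon_coerc_3} will follow from \eqref{mon_coerc_1} and \eqref{mon_coerc_2} by summation together with $\vertii{Bw}_{L_2(H^2;H^1)}^2 = \vertii{Bw}_{L_2(H^2;L^2)}^2 + \vertii{Bw}_{L_2(H^2;\dot H^1)}^2$ and $\vertii{w}_{2,2}^2 \le \vertii{w}_{1,2}^2 + \vertii{\partial_x w}_{1,2}^2$.

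For \eqref{mon_coerc_1} I integrate by parts once to obtain $2\dualpair{A^\eps w}{w} = -\sum_k \lambda_k^2 \int_0^L \psi_k \partial_x(\psi_k w)(\partial_x w)\,\d x - 2\eps\vertii{\partial_x w}_2^2$. Expanding $\psi_k \partial_x(\psi_k w) = \psi_k^2 \partial_x w + \psi_k(\partial_x\psi_k) w$ and $(\partial_x(\psi_k w))^2 = \psi_k^2(\partial_x w)^2 + 2\psi_k(\partial_x\psi_k) w \partial_x w + (\partial_x\psi_k)^2 w^2$, I observe that the $\psi_k^2(\partial_x w)^2$ terms cancel exactly in $2\dualpair{A^\eps w}{w} + \vertii{Bw}_{L_2(H^2;L^2)}^2$. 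A further integration by parts transforms the cross term $\int \psi_k(\partial_x\psi_k) w \partial_x w\,\d x = -\tfrac14\int \partial_x^2(\psi_k^2) w^2\,\d x$, leaving a pure $w^2$-contribution with coefficient $\tfrac12(\partial_x\psi_k)^2 - \tfrac12\psi_k\partial_x^2\psi_k$. By \eqref{der_psi_k} and \eqref{basis} this coefficient is bounded in $L^\infty$ by $O(k^{-2})$, so \eqref{cond_lambda} renders the $\lambda_k^2$-weighted sum $\le C\vertii{w}_2^2$. Combined with $-2\eps\vertii{\partial_x w}_2^2 \le \eps\vertii{w}_2^2 - \eps\vertii{w}_{1,2}^2$, this yields \eqref{mon_coerc_1}.

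The derivation of \eqref{mon_coerc_2} is structurally identical: after one integration by parts $\dualpair{\partial_x A^\eps w}{\partial_x w} = -\int_0^L A^\eps w\,\partial_x^2 w\,\d x$, so I expand $\partial_x(\psi_k \partial_x(\psi_k w)) = \psi_k^2 \partial_x^2 w + 3\psi_k(\partial_x\psi_k)\partial_x w + [(\partial_x\psi_k)^2 + \psi_k\partial_x^2\psi_k]w$ and $\partial_x^2(\psi_k w) = \psi_k\partial_x^2 w + 2(\partial_x\psi_k)\partial_x w + (\partial_x^2\psi_k)w$. The leading $\psi_k^2(\partial_x^2 w)^2$ contributions in $-2\int A^\eps w\,\partial_x^2 w\,\d x$ and $\vertii{\partial_x^2(\psi_k w)}_2^2$ cancel, and each of the remaining cross-terms, after one additional integration by parts to eliminate any lingering $\partial_x^2 w$ against $\partial_x w$ or $w$, reduces to a combination of $(\partial_x w)^2$, $w\partial_x w$, and $w^2$ whose $\lambda_k^2$-weighted coefficients are controlled uniformly via \eqref{basis}, \eqref{der_psi_k}, \eqref{cond_lambda} and Young's inequality, absorbing the mixed terms into the surplus produced by $-2\eps\vertii{\partial_x^2 w}_2^2$.

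The main obstacle will be the meticulous bookkeeping of these expansions and integrations by parts needed to verify that the $\psi_k^2(\partial_x w)^2$ and $\psi_k^2(\partial_x^2 w)^2$ terms in $2\dualpair{A^\eps w}{w} + \vertii{Bw}^2$ and $2\dualpair{\partial_x A^\eps w}{\partial_x w} + \vertii{Bw}_{L_2(H^2;\dot H^1)}^2$ cancel \emph{exactly} (not merely up to a perturbation): this is the precise manifestation of the Stratonovich structure emphasized in \S\ref{sec:ito_strat}, and without it the estimates \eqref{mon_coerc} would degenerate as $\eps\searrow 0$, precluding the uniform viscous limit used in Proposition~\ref{prop:weak}.
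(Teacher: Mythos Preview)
Your approach is essentially the same as the paper's: both compute $\vertii{Bw}_{L_2(H^2;L^2)}^2 = \sum_k\lambda_k^2\vertii{\partial_x(\psi_k w)}_2^2$ and $\vertii{Bw}_{L_2(H^2;\dot H^1)}^2 = \sum_k\lambda_k^2\vertii{\partial_x^2(\psi_k w)}_2^2$, integrate by parts to exhibit the exact cancellation of the top-order contributions $\sum_k\lambda_k^2\int\psi_k^2(\partial_x w)^2\,\d x$ (respectively $\sum_k\lambda_k^2\int\psi_k^2(\partial_x^2 w)^2\,\d x$), reduce the residual terms via further integration by parts, and bound them using \eqref{basis}, \eqref{der_psi_k}, \eqref{cond_lambda}; \eqref{mon_coerc_3} then follows by adding \eqref{mon_coerc_1} and \eqref{mon_coerc_2}.

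One caution: your phrase ``absorbing the mixed terms into the surplus produced by $-2\eps\vertii{\partial_x^2 w}_2^2$'' is misleading. After your integration by parts no factor $\partial_x^2 w$ survives, and the residual $(\partial_x w)^2$, $w\,\partial_x w$, $w^2$ terms are bounded directly by $C\vertii{w}_{1,2}^2$ via Young's inequality, with $C$ independent of $\eps$; the paper does exactly this and never feeds anything back into the viscous term. The only role of the extra factor of $\eps$ is to pass from $-2\eps\vertii{\partial_x^2 w}_2^2$ to $-\eps\vertii{\partial_x w}_{1,2}^2$, with the leftover $\eps\vertii{\partial_x w}_2^2$ absorbed into $C$ using $\eps\le 1$. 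If you were to genuinely absorb a nontrivial term into the $\eps$-surplus via Young, the companion constant would scale like $\eps^{-1}$, destroying precisely the $\eps$-uniformity you correctly identify as essential in your final paragraph.
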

\begin{proof}[Proof of Lemma~\ref{lem:mon_coerc}]
By definition, estimate~\eqref{mon_coerc_3} follows by adding \eqref{mon_coerc_1} and \eqref{mon_coerc_2}. We prove \eqref{mon_coerc_1} and \eqref{mon_coerc_2} separately:
\proofstep{Proof of \eqref{mon_coerc_1}}
Observe that for $w \in H^2(\TTT_L)$ we obtain through integration by parts
\begin{align*}
\dualpair{A^\eps w}{w} &= - \frac 1 2 \sum_{k \in \Z} \lambda_k^2 \int_0^L \psi_k^2 (\partial_x w)^2 \, \d x - \eps \int_0^L (\partial_x w)^2 \d x - \frac 1 8 \sum_{k \in \Z} \lambda_k^2 \int_0^L \left(\partial_x \psi_k^2\right) \left(\partial_x w^2\right) \d x \\
&= - \frac 1 2 \sum_{k \in \Z} \lambda_k^2 \int_0^L \psi_k^2 (\partial_x w)^2 \, \d x + \frac 1 8 \sum_{k \in \Z} \lambda_k^2 \int_0^L \left(\partial_x^2 \psi_k^2\right) \, w^2 \, \d x - \eps \int_0^L (\partial_x w)^2 \d x
\end{align*}
and further
\begin{align*}
\vertii{B w}_{L_2\left(H^2(\TTT_L);L^2(\TTT_L)\right)}^2 &= \sum_{k \in \Z} \lambda_k^2 \int_0^L \left((\partial_x \psi_k) w + \psi_k (\partial_x w)\right)^2 \d x \\
&= \sum_{k \in \Z} \lambda_k^2 \int_0^L \psi_k^2 \, (\partial_x w)^2 \, \d x - \sum_{k \in \Z} \lambda_k^2 \int_0^L \psi_k \, (\partial_x^2 \psi_k) \, w^2 \, \d x,
\end{align*}
so that the term $\sum_{k \in \Z} \lambda_k^2 \int_0^L \psi_k^2 \, (\partial_x w)^2 \, \d x$ cancels and we get
\begin{eqnarray*}
\lefteqn{2 \dualpair{A^\eps w}{w} + \vertii{B w}_{L_2\left(H^2(\TTT_L);L^2(\TTT_L)\right)}^2} \\
&=& \frac 1 4 \sum_{k \in \Z} \lambda_k^2 \int_0^L \left((\partial_x^2 \psi_k^2) - 4 \psi_k (\partial_x^2 \psi_k)\right) w^2 \, \d x - 2 \eps \int_0^L (\partial_x w)^2 \, \d x \\
&\stackrel{\eqref{basis},\eqref{der_psi_k}}{\le}& C \sum_{k \in \Z} \lambda_k^2 \vertii{w}_2^2 - 2 \eps \vertii{\partial_x w}_2^2 \stackrel{\eqref{cond_lambda}}{\le} C \vertii{w}_2^2 - 2 \eps \vertii{w}_{1,2}^2
\end{eqnarray*}
for some $C < \infty$ independent of $\eps$, where we have used $\eps \le 1$.

\proofstep{Proof of \eqref{mon_coerc_2}}
Again, for $w \in H^2(\TTT_L)$ we integrate by parts several times and arrive at
\begin{eqnarray*}
\lefteqn{\dualpair{\partial_x A^\eps w}{\partial_x w}} \\
&=& - \frac 1 2 \sum_{k \in \Z} \lambda_k^2 \int_0^L \left(\partial_x \left(\psi_k \partial_x (\psi_k w)\right)\right) (\partial_x^2 w) \, \d x - \eps \int_0^L (\partial_x^2 w)^2 \, \d x \\
&=& - \frac 1 2 \sum_{k \in \Z} \lambda_k^2 \int_0^L \left(\psi_k^2 (\partial_x^2 w) + \frac 3 2 (\partial_x \psi_k^2) (\partial_x w) + \frac 1 2 (\partial_x^2 \psi_k^2) w\right) (\partial_x^2 w) \, \d x - \eps \int_0^L (\partial_x^2 w)^2 \, \d x \\
&=& - \frac 1 2 \sum_{k \in \Z} \lambda_k^2 \int_0^L \psi_k^2 (\partial_x^2 w)^2 \, \d x - \frac 3 8 \sum_{k \in \Z} \lambda_k^2 (\partial_x \psi_k^2) \left(\partial_x (\partial_x w)^2\right) \d x \\
&& + \frac 1 4 \sum_{k \in \Z} \lambda_k^2 \int_0^L (\partial_x^2 \psi_k^2) (\partial_x w)^2 \, \d x + \frac 1 8 \sum_{k \in \Z} \lambda_k^2 \int_0^L (\partial_x^3 \psi_k^2) (\partial_x w^2) \, \d x - \eps \int_0^L (\partial_x^2 w)^2 \, \d x \\
&=& - \frac 1 2 \sum_{k \in \Z} \lambda_k^2 \int_0^L \psi_k^2 (\partial_x^2 w)^2 \, \d x + \frac 5 8 \sum_{k \in \Z} \lambda_k^2 \int_0^L \left(\partial_x^2 \psi_k^2\right) (\partial_x w)^2 \, \d x \\
&& - \frac 1 8 \sum_{k \in \Z} \lambda_k^2 \int_0^L \left(\partial_x^4 \psi_k^2\right) w^2 \, \d x - \eps \int_0^L (\partial_x^2 w)^2 \, \d x
\end{eqnarray*}
and
\begin{eqnarray*}
\lefteqn{\vertii{B w}_{L_2\left(H^2(\TTT_L);\dot H^1(\TTT_L)\right)}^2} \\
&=& \sum_{k \in \Z} \lambda_k^2 \int_0^L \left((\partial_x^2 \psi_k) w + 2 (\partial_x \psi_k) (\partial_x w) + \psi_k (\partial_x^2 w)\right)^2 \d x \\
&=& \sum_{k \in \Z} \lambda_k^2 \int_0^L \psi_k^2 (\partial_x^2 w)^2 \, \d x + 4 \sum_{k \in \Z} \lambda_k^2 \int_0^L (\partial_x \psi_k)^2 (\partial_x w)^2 \, \d x + \sum_{k \in \Z} \lambda_k^2 \int_0^L (\partial_x^2 \psi_k)^2 w^2 \, \d x \\
&& + \sum_{k \in \Z} \lambda_k^2 \int_0^L (\partial_x \psi_k^2) \left(\partial_x (\partial_x w)^2\right) \d x + \sum_{k \in \Z} \lambda_k^2 \int_0^L \left(\partial_x  (\partial_x\psi_k)^2\right) (\partial_x w^2) \, \d x \\
&& + 2 \sum_{k \in \Z} \lambda_k^2 \int_0^L \psi_k (\partial_x^2 \psi_k) w (\partial_x^2 w) \, \d x \\
&=& \sum_{k \in \Z} \lambda_k^2 \int_0^L \psi_k^2 (\partial_x^2 w)^2 \, \d x + \sum_{k \in \Z} \lambda_k^2 \int_0^L \left(4 (\partial_x \psi_k)^2 - (\partial_x^2 \psi_k^2) - 2 \psi_k (\partial_x^2 \psi_k)\right) (\partial_x w)^2 \, \d x \\
&& + \sum_{k \in \Z} \lambda_k^2 \int_0^L \left((\partial_x^2 \psi_k)^2 - \partial_x^2 (\partial_x \psi_k)^2 + \partial_x^2 \left(\psi_k (\partial_x^2 \psi_k)\right)\right) w^2 \, \d x \\
&=& \sum_{k \in \Z} \lambda_k^2 \int_0^L \psi_k^2 (\partial_x^2 w)^2 \, \d x + \sum_{k \in \Z} \lambda_k^2 \int_0^L \left(2 (\partial_x \psi_k)^2 - 4 \psi_k (\partial_x^2 \psi_k)\right) (\partial_x w)^2 \, \d x \\
&& + \sum_{k \in \Z} \lambda_k^2 \int_0^L \psi_k \, (\partial_x^4 \psi_k) \, w^2 \, \d x,
\end{eqnarray*}
and hence $\sum_{k \in \Z} \lambda_k^2 \int_0^L \psi_k^2 (\partial_x^2 w)^2 \, \d x$ cancels and we arrive at
\begin{eqnarray*}
\lefteqn{2 \dualpair{\partial_x A^\eps w}{\partial_x w} + \vertii{B w}_{L_2\left(H^2(\TTT_L);\dot H^1(\TTT_L)\right)}^2} \\
&=& \frac 3 2 \sum_{k \in \Z} \lambda_k^2 \int_0^L \left(3 (\partial_x \psi_k)^2 - \psi_k (\partial_x^2 \psi_k)\right) (\partial_x w)^2 \, \d x \\
&& +  \frac 1 4 \sum_{k \in \Z} \lambda_k^2 \int_0^L \left(- (\partial_x^4 \psi_k^2) + 4 \psi_k (\partial_x^4 \psi_k)\right) w^2 \, \d x - 2 \eps \int_0^L (\partial_x^2 w)^2 \, \d x \\
&\stackrel{\eqref{basis},\eqref{der_psi_k}}{\le}& C \, \sum_{k \in \Z} \lambda_k^2 \vertii{w}_{1,2}^2 - 2 \eps \vertii{\partial_x^2 w}_2^2 \stackrel{\eqref{cond_lambda}}{\le} C \vertii{w}_{1,2}^2 - 2 \eps \vertii{\partial_x w}_{1,2}^2
\end{eqnarray*}
for some $C < \infty$ independent of $\eps$, where we have used $\eps \le 1$.
\end{proof}
\begin{proof}[Proof of Proposition~\ref{prop:var}]
We verify sufficient conditions for variational solutions to \eqref{stochastic_regular} as can be found for instance in \cite[Theorem~4.2.4]{LiuRoeckner2015}.

\proofstep{Hemicontinuity}
For $u, v, w \in H^2(\TTT_L)$ and $s \in \R$ we have
\[
\Dualpair{A^\eps (u+sv)}{w} = \Dualpair{A^\eps u}{w} + s \Dualpair{A^\eps v}{w},
\]
which is for fixed $u$, $v$, and $w$ a linear function in $s$ and in particular hemicontinuous.

\proofstep{Weak monotonicity and coercivity}
This follows from \eqref{mon_coerc_3} of Lemma~\ref{lem:mon_coerc}.

\proofstep{Boundedness}
For $w \in H^1(\TTT_L)$ and $\varphi \in C^\infty(\TTT_L)$ we have
\begin{eqnarray*}
\verti{\Dualpair{A^\eps w}{\varphi}} &\le& \frac 1 2 \sum_{k \in \Z} \lambda_k^2 \verti{\int_0^L \psi_k \left(\partial_x (\psi_k w)\right) (\partial_x \varphi) \, \d x} + \eps \verti{\int_0^L (\partial_x w) (\partial_x \varphi) \, \d x} \\
&& + \frac 1 2 \sum_{k \in \Z} \lambda_k^2 \verti{\int_0^L \left(\partial_x \left(\psi_k \left(\partial_x (\psi_k w)\right)\right)\right) (\partial_x^2 \varphi) \, \d x} + \eps \verti{\int_0^L (\partial_x^2 w) (\partial_x^2 \varphi) \, \d x} \\
&\stackrel{\eqref{basis}, \eqref{der_psi_k}}{\le}& C \left(\sum_{k \in \Z} \lambda_k^2 + \eps\right) \vertii{w}_{2,2} \vertii{\varphi}_{2,2},
\end{eqnarray*}
so that $\vertii{A^\eps w}_{L^2(\TTT_L)} \stackrel{\eqref{cond_lambda}}{\le} C \vertii{w}_{2,2}$ since $\eps \le 1$.

\proofstep{A-priori estimate \eqref{apriori_w_eps_1}}
From \cite[Theorem~4.2.4]{LiuRoeckner2015} we infer that a unique variational solution to \eqref{stochastic_regular_abstract} as in Definition~\ref{def:sol_eps} exists and \eqref{regularity_variational} is satisfied. While general $p \in [2,\infty)$ are treated in \cite[Theorem~5.1.3]{LiuRoeckner2015} or \cite[Theorem~1.1]{LiuRoeckner2010}, the noise there does not allow for a gradient structure as in the present case. Nonetheless, the reasoning mainly follows the proof of \cite[Lemma~5.1.5]{LiuRoeckner2015}.

\medskip

Using It\^o's lemma (cf.~\cite[Theorem~3.1]{Krylov2013} or \cite[Theorem~4.2.5]{LiuRoeckner2015}) and equation~\eqref{solution_formula_eps} of Definition~\ref{def:sol_eps}, we obtain for $t \in [0,\delta)$
\begin{eqnarray*}
\lefteqn{\vertii{w^\eps(t,\cdot)}_{1,2}^2 - \vertii{w_0}_{1,2}^2} \\
&=& 2 \int_0^t \left(\left(B w^\eps(t',\cdot)\right) \d W_{H^2(\TTT_L)}(t'), w^\eps(t',\cdot)\right)_{1,2} \\
&& + \int_0^t \left(2 \Dualpair{A^\eps w^\eps(t',\cdot)}{w^\eps(t',\cdot)} + \vertii{B w^\eps(t',\cdot)}_{L_2(H^2(\TTT_L);H^1(\TTT_L))}^2\right) \d t' \\
&\stackrel{\eqref{b_op},\eqref{def_wiener}}{=}& 2 \sum_{k \in \Z} \lambda_k \int_0^t \left(\partial_x \left(\psi_k w^\eps(t',\cdot)\right), w^\eps(t',\cdot)\right)_{1,2} \d \beta^k(t') \\
&& + \int_0^t \left(2 \Dualpair{A^\eps w^\eps(t',\cdot)}{w^\eps(t',\cdot)} + \vertii{B w^\eps(t',\cdot)}_{L_2(H^2(\TTT_L);H^1(\TTT_L))}^2\right) \d t',
\end{eqnarray*}
$\PP$-almost surely. For $p \ge 4$ this implies again using It\^o's lemma for $\R \owns y \mapsto \verti{y}^{\frac p 2}$
\begin{align}
&\vertii{w^\eps(t,\cdot)}_{1,2}^p - \vertii{w_0}_{1,2}^p \label{apriori_w_proof} \\
&\quad = p \sum_{k \in \Z} \lambda_k \int_0^t \vertii{w^\eps(t',\cdot)}_{1,2}^{p-2} \left(\partial_x \left(\psi_k w^\eps(t',\cdot)\right),w^\eps(t',\cdot)\right)_{1,2} \d \beta^k(t') \nonumber \\
&\quad \phantom{=} + \frac p 2 \int_0^t \vertii{w^\eps(t',\cdot)}_{1,2}^{p-2} \left(2 \Dualpair{A^\eps w^\eps(t',\cdot)}{w^\eps(t',\cdot)} + \vertii{B w^\eps(t',\cdot)}_{L_2(H^2(\TTT_L);H^1(\TTT_L))}^2\right) \d t' \nonumber \\
&\quad \phantom{=} + \frac{p (p-2)}{2} \sum_{k \in \Z} \lambda_k^2 \int_0^t \vertii{w^\eps(t',\cdot)}_{1,2}^{p-4} \left(\partial_x \left(\psi_k w^\eps(t',\cdot)\right),w^\eps(t',\cdot)\right)_{1,2}^2 \, \d t', \nonumber
\end{align}
$\PP$-almost surely. Next, we introduce for any $R > 0$ the stopping times
\[
\tau_R := \inf\left\{t \in [0,\delta) \colon \vertii{w^\eps(t,\cdot)}_{1,2} > R\right\} \wedge \delta.
\]
By Markov's inequality and using \eqref{regularity_variational} for $p = 2$
\[
\PP \left\{\sup_{t \in [0,\delta)} \vertii{w^\eps(t,\cdot)}_{1,2} > R\right\} \le \frac{1}{R^2} \EE \sup_{t \in [0,\delta)} \vertii{w^\eps(t,\cdot)}_{1,2}^2 \to 0 \quad \mbox{as} \quad R \to \infty,
\]
so that $\lim_{R \to \infty} \tau_R = \delta$, $\PP$-almost surely. The Burkholder-Davis-Gundy inequality (cf.~\cite[Theorem~3.28]{KaratzasShreve1991}) implies
\begin{eqnarray*}
\lefteqn{\EE \sup_{t' \in [0,\tau_R \wedge t)} \verti{\int_0^{t'} \vertii{w^\eps(t'',\cdot)}_{1,2}^{p-2} \left(\partial_x (\psi_k w^\eps(t'',\cdot)), w^\eps(t'',\cdot)\right)_{1,2} \d \beta^k(t'')}} \\
&\le& 3 \, \EE \sqrt{\int_0^{\tau_R \wedge t} \vertii{w^\eps(t',\cdot)}_{1,2}^{2 p - 4} \left(\partial_x (\psi_k w^\eps(t',\cdot)), w^\eps(t',\cdot)\right)_{1,2}^2 \d t'}.
\end{eqnarray*}
Now, we note that integration by parts gives
\begin{align*}
\left(\partial_x (\psi_k w^\eps(t',\cdot)), w^\eps(t',\cdot)\right)_{1,2} &= \int_0^L (\partial_x \psi_k) \, (w^\eps)^2 \, \d x + \int_0^L \psi_k \, w^\eps \, (\partial_x w^\eps) \, \d x \\
&\phantom{=} + \int_0^L (\partial_x^2 \psi_k) \, w^\eps \, (\partial_x w^\eps) \, \d x + \frac 3 2 \int_0^L (\partial_x \psi_k) \, (\partial_x w^\eps)^2 \, \d x,
\end{align*}
so that with \eqref{basis} and \eqref{der_psi_k} we have
\[
\verti{\left(\partial_x (\psi_k w^\eps(t',\cdot)), w^\eps(t',\cdot)\right)_{1,2}} \le C \vertii{w^\eps(t',\cdot)}_{1,2}^2, \quad \mbox{$\PP$-almost surely},
\]
where $C < \infty$ only depends on $L$, and hence by Young's inequality
\begin{eqnarray*}
\lefteqn{\EE \sup_{t' \in [0,\tau_R \wedge t)} \verti{\int_0^{t'} \vertii{w^\eps(t'',\cdot)}_{1,2}^{p-2} \left(\partial_x (\psi_k w^\eps(t'',\cdot)), w^\eps(t'',\cdot)\right)_{1,2} \d \beta^k(t'')}} \\
&\le& C \, \EE \sqrt{\int_0^{\tau_R \wedge t} \vertii{w^\eps(t',\cdot)}_{1,2}^{2 p} \d t'} \le C \, \EE \sqrt{\sup_{t' \in [0,\tau_R \wedge t)} \vertii{w^\eps(t',\cdot)}_{1,2}^p \int_0^{\tau_R \wedge t} \vertii{w^\eps(t',\cdot)}_{1,2}^p \d t'} \\
&\le& \nu \, \EE \sup_{t' \in [0,\tau_R)} \vertii{w^\eps(t',\cdot)}_{1,2}^p + \frac C \nu \, \EE \int_0^{\tau_R} \vertii{w^\eps(t',\cdot)}_{1,2}^p \d t',
\end{eqnarray*}
where $\nu > 0$ can be chosen arbitrarily small and $C < \infty$ is independent of $R$. Furthermore, with the same computation also
\[
\int_0^t \vertii{w^\eps(t',\cdot)}_{1,2}^{p-4} \left(\partial_x \left(\psi_k w^\eps(t',\cdot)\right),w^\eps(t',\cdot)\right)_{1,2}^2 \, \d t' \le C \int_0^t \vertii{w^\eps(t',\cdot)}_{1,2}^p \, \d t', \quad \mbox{$\PP$-almost surely},
\]
where $C < \infty$ only depends on $L$. Now, the combination with \eqref{cond_lambda}, \eqref{mon_coerc_3} of Lemma~\ref{lem:mon_coerc}, and \eqref{apriori_w_proof} gives for sufficiently small $\nu$
\[
\EE \sup_{t' \in [0,\tau_R \wedge t)} \vertii{w^\eps(t',\cdot)}_{1,2}^p \le C \left( \EE \vertii{w_0}_{1,2}^p + \int_0^{\tau_R \wedge t} \EE \sup_{t'' \in [0,\tau_R \wedge t')} \vertii{w^\eps(t'',\cdot)}_{1,2}^p \d t''\right),
\]
where $C < \infty$ only depends on $L$ and $p \in \{2\} \cup [4,\infty)$. Gr\"onwall's inequality implies
\begin{align*}
\EE \sup_{t \in [0,\tau_R)} \vertii{w^\eps(t,\cdot)}_{1,2}^p &\le C_1 \EE \vertii{w_0}_{1,2}^p,
\end{align*}
with $C_1 < \infty$ only depending on $L$ and $T$, so that \eqref{apriori_w_eps_1} for $p \in \{2\} \cup [4,\infty)$ follows by monotone convergence in the limit as $R \to \infty$. The case $p \in (2,4)$ is obtained by complex interpolation using the Banach-valued Riesz-Thorin theorem (cf.~\cite[Theorem~2.2.1]{HytonenVanNeervenVeraarWeis2016} or more generally \cite[Theorem~5.1.2]{BergLofstrom1976}).

\proofstep{A-priori estimate \eqref{apriori_w_eps_2}}
We precisely keep track on the constants appearing in order to derive estimate~\eqref{apriori_w_eps_2}:

\medskip

With help of It\^o's lemma (cf.~\cite[Theorem~3.1]{Krylov2013} or \cite[Theorem~4.2.5]{LiuRoeckner2015}) we obtain for $t \in [0,\delta)$ and utilizing equation~\eqref{solution_formula_eps} of Definition~\ref{def:sol_eps}
\begin{eqnarray*}
\lefteqn{\vertii{\partial_x w^\eps(t,\cdot)}_2^2 - \vertii{\partial_x w_0}_2^2} \\
&\stackrel{\eqref{b_op}}{=}& 2 \sum_{k \in \Z} \lambda_k \int_0^t \left(\partial_x^2 \left(\psi_k w^\eps(t',\cdot)\right),\partial_x w^\eps(t',\cdot)\right)_2 \d \beta^k(t') \\
&& + \int_0^t \left(2 \dualpair{\partial_x A^\eps w^\eps(t',\cdot)}{\partial_x w^\eps(t',\cdot)} + \vertii{B w^\eps(t',\cdot)}_{L_2(H^2(\TTT_L);\dot H^1(\TTT_L))}^2\right) \d t',
\end{eqnarray*}
$\PP$-almost surely. For $p \ge 4$, It\^o's formula applied to $\R \owns y \mapsto \verti{y}^{\frac p 2}$ gives
\begin{align*}
&\vertii{\partial_x w^\eps(t,\cdot)}_2^p - \vertii{\partial_x w_0}_2^p \\
&\quad = p \sum_{k \in \Z} \lambda_k \int_0^t \vertii{\partial_x w^\eps(t',\cdot)}_2^{p-2} \left(\partial_x^2 \left(\psi_k w^\eps(t',\cdot)\right),\partial_x w^\eps(t',\cdot)\right)_2 \d \beta^k(t') \\
&\quad \phantom{=} + \frac p 2 \int_0^t \vertii{\partial_x w^\eps(t',\cdot)}_2^{p-2} \left(2 \dualpair{\partial_x A^\eps w^\eps(t',\cdot)}{\partial_x w^\eps(t',\cdot)} + \vertii{B w^\eps(t',\cdot)}_{L_2(H^2(\TTT_L);\dot H^1(\TTT_L))}^2\right) \d t' \\
&\quad \phantom{=} + \frac{p (p-2)}{2} \sum_{k \in \Z} \lambda_k^2 \int_0^t \vertii{\partial_x w^\eps(t',\cdot)}_2^{p-4} \left(\partial_x^2 \left(\psi_k w^\eps(t',\cdot)\right),\partial_x w^\eps(t',\cdot)\right)_2^2 \, \d t',
\end{align*}
$\PP$-almost surely. Taking the expectation gives
\begin{align*}
& \EE\vertii{\partial_x w^\eps(t,\cdot)}_2^p - \EE\vertii{\partial_x w^\eps(0,\cdot)}_2^p \\
&\quad = \frac p 2 \EE \int_0^t \vertii{\partial_x w^\eps(t',\cdot)}_2^{p-2} \left(2 \dualpair{\partial_x A^\eps w^\eps(t',\cdot)}{\partial_x w^\eps(t',\cdot)} + \vertii{B w^\eps(t',\cdot)}_{L_2(H^2(\TTT_L);\dot H^1(\TTT_L))}^2\right) \d t' \\
&\quad \phantom{=} + \frac{p (p-2)}{2} \sum_{k \in \Z} \lambda_k^2 \, \EE \int_0^t \vertii{\partial_x w^\eps(t',\cdot)}_2^{p-4} \left(\partial_x^2 \left(\psi_k w^\eps(t',\cdot)\right),\partial_x w^\eps(t',\cdot)\right)_2^2 \, \d t'.
\end{align*}
For the last line observe that through integration by parts as before
\[
\left(\partial_x^2 \left(\psi_k w^\eps(t',\cdot)\right),\partial_x w^\eps(t',\cdot)\right)_2 = \int_0^L (\partial_x^2 \psi_k) w^\eps \partial_x w^\eps \, \d x + \frac 3 2 \int_0^L (\partial_x \psi_k) (\partial_x w^\eps)^2 \, \d x,
\]
$\PP$-almost surely, that is,
\[
\left(\partial_x^2 \left(\psi_k w^\eps(t',\cdot)\right),\partial_x w^\eps(t',\cdot)\right)_2^2 \stackrel{\eqref{basis},\eqref{der_psi_k}}{\le} C \, \vertii{\partial_x w^\eps(t',\cdot)}_2^2 \vertii{w^\eps(t',\cdot)}_{1,2}^2, \quad \mbox{$\PP$-almost surely.}
\]
Further applying \eqref{mon_coerc_2} of Lemma~\ref{lem:mon_coerc} gives
\begin{eqnarray*}
\lefteqn{\EE\vertii{\partial_x w^\eps(t,\cdot)}_2^p - \EE\vertii{\partial_x w^\eps(0,\cdot)}_2^p} \\
&\le& C \, p (p+1) \sum_{k \in \Z} \lambda_k^2 \, \EE \int_0^t \vertii{\partial_x w^\eps(t',\cdot)}_2^{p-2} \left(\vertii{\partial_x w^\eps(t',\cdot)}_2^2 + \left(\int_0^L w^\eps(t',\cdot) \, \d x\right)^2\right) \d t' \\
&\stackrel{\eqref{cond_lambda}}{\le}& C_2 \int_0^t \EE \vertii{\partial_x w^\eps(t',\cdot)}_2^p \, \d t' + C_3 \, \EE \int_0^t \verti{\int_0^L w^\eps(t',\cdot) \, \d x}^p \d t',
\end{eqnarray*}
where we have applied Poincar\'e's inequality 
\[
\vertii{w^\eps(t',\cdot)}_{1,2} \le C \left(\vertii{\partial_x w^\eps(t',\cdot)}_2 + \verti{\int_0^L w^\eps(t',\cdot) \, \d x}\right), \quad \mbox{$\PP$-almost surely},
\]
and Young's inequality. Testing of \eqref{solution_formula_eps} of Definition~\ref{def:sol_eps} against a non-trivial constant gives $\int_0^L w^\eps(t,\cdot) \, \d x = \int_0^L w_0 \, \d x$ for $t \in [0,T)$, $\PP$-almost surely. Now the claim \eqref{apriori_w_eps} for $p \in \{2\} \cup [4,\infty)$ follows from Gr\"onwall's inequality and the general case $p \in [2,\infty)$ by complex interpolation using the Banach-valued Riesz-Thorin theorem (cf.~\cite[Theorem~2.2.1]{HytonenVanNeervenVeraarWeis2016} or more generally \cite[Theorem~5.1.2]{BergLofstrom1976}).
\end{proof}
%

\section{Real interpolation of Besov spaces with mixed smoothness\label{sec:interpolation}}
The following result from interpolation theory is essential in proving regularity in time (cf.~Proposition~\ref{prop:reg_time_un}).
\begin{lemma}\label{lem:interpolation}
Suppose $\delta \in (0,\infty)$, $r_1, r_2, s_1, s_2 \in \R$ with $r_1 \ne r_2$ and $s_1 \ne s_2$, $q \in [1,\infty)$, and $\kappa \in [0,1]$. Then
\begin{equation}\label{sob_interp}
\left(B^{r_1,q}_q\left([0,\delta);B^{s_1,q}_q(\TTT_L)\right), B^{r_2,q}_q\left([0,\delta);B^{s_2,q}_q(\TTT_L)\right)\right)_{\kappa,q} = B^{r,q}_q\left([0,\delta);B^{s,q}_q(\TTT_L)\right),
\end{equation}
where $r = (1-\kappa) r_1 + \kappa r_2$ and $s = (1-\kappa) s_1 + \kappa s_2$. The norms in \eqref{sob_interp} are equivalent, with bounds that are independent of $\delta$.
\end{lemma}
\begin{proof}
By \cite[Proposition~4.2]{Amann2000} and scaling in the time variable, there exists a $\delta$-uniformly bounded linear extension operator
\[
E \colon Y_j := B^{r_j,q}_q\left([0,\delta);B^{s_j,q}_q(\TTT_L)\right) \to Z_j := B^{r_j,q}_q\left(\R;B^{s_j,q}_q(\TTT_L)\right),
\]
that is, setting $R (\cdot) := (\cdot)|_{[0,\delta)}$, we have $R E v = v$ for $v \in Y_j$ and the operator norm of $E$ is independent of $\delta$. Now, we may apply \cite[Theorem~3.1]{Amann2000}, \cite[Theorem~2.7.2~(i)]{Amann2019}, or \cite[(6.9)]{Grisvard1966} to deduce
\[
\left(Z_1,Z_2\right)_{\kappa,q} = B^{r,q}_q\left(\R;\left(B^{s_1,q}_q(\TTT_L),B^{s_2,q}_q(\TTT_L)\right)_{\kappa,q}\right).
\]
The interpolation of periodic Besov spaces is known (cf.~\cite[\S3.6.1, Theorem~1~(i)]{SchmeisserTriebel1987}), that is,
\[
\left(B^{s_1,q}_q(\TTT_L),B^{s_2,q}_q(\TTT_L)\right)_{\kappa,q} = B^{s,q}_q(\TTT_L).
\]
Altogether, 
\[
Y_j = R (E Y_j) = R Z_j = R B^{r,q}_q\left(\R;B^{s,q}_q(\TTT_L)\right) = B^{r,q}_q\left([0,\delta);B^{s,q}_q(\TTT_L)\right)
\]
with $\delta$-uniformly equivalent norms, which yields \eqref{sob_interp}.
\end{proof}
\begin{lemma}\label{lem:connect}
Suppose that $X$ is a Banach space, $T \in (0,\infty)$, $N \in \N$, $\delta := \frac T N$, $s \in (0,1)$, and $q \in (1,\infty]$. If $\phi \in BC^0\left([0,T);X\right)$ and $\phi \in B^{s,q}_q\left(\left[(j-1) \frac \delta 2, j \frac \delta 2\right);X\right)$ for $j \in \left\{1,\ldots,2N\right\}$, then $\phi \in B^{s,q}_q\left([0, T);X\right)$ with
\begin{equation}\label{sum_norms}
\vertii{\phi}_{B^{s,q}_q\left([0, T);X\right)} \le C \left(\sum_{j = 1}^{2N} \vertii{\phi}_{B^{s,q}_q\left(\left[(j-1) \frac \delta 2, j \frac \delta 2\right);X\right)}^q\right)^{\frac 1 q},
\end{equation}
where $C < \infty$ only depends on $q$.
\end{lemma}
\begin{proof}
By mollification with a standard mollifier and using the interpolation property, we see that we can approximate $\phi$ on any interval $\left[(j-1) \frac \delta 2, j \frac \delta 2\right)$ by a function $\phi_\eps \in C^\infty\left(\left[(j-1) \frac \delta 2, j \frac \delta 2\right);X\right)$ with
\[
\vertii{\phi - \phi_\eps}_{BC^0(\left[(j-1) \frac \delta 2, j \frac \delta 2\right);X)} \to 0 \quad \mbox{and} \quad \vertii{\phi - \phi_\eps}_{B^{s,q}_q\left(\left[(j-1) \frac \delta 2, j \frac \delta 2\right);X\right)} \quad \mbox{as} \quad \eps \searrow 0,
\]
where $j \in \left\{1,\ldots,2N\right\}$.
Adding to $\phi_\eps$ the polygonal chain through the points
\[
\left(j \tfrac \delta 2, \phi\left(j \tfrac \delta 2,\cdot\right) - \phi_\eps\left(j \tfrac \delta 2,\cdot\right)\right), \quad \mbox{where} \quad j \in \left\{0,\ldots,2N\right\},
\]
we may without loss of generality additionally assume $\phi_\eps \in BC^0\left([0,T);X\right)$, so that in particular $\phi_\eps \in W^{1,\infty}([0,T);X)$, and
\[
\vertii{\phi - \phi_\eps}_{BC^0\left([0,T);X\right)} \to 0 \quad \mbox{as} \quad \eps \searrow 0.
\]
Since up to a $q$-dependent constant (denoted by $\sim_q$) we have for $\psi \in W^{1,\infty}\left([0,T);X\right)$,
\[
\vertii{\psi}_{B^{s,q}_q\left([0,T);X\right)}^q \sim_q \int_0^\infty \inf_{\psi = \psi_1 + \psi_2} \left(\tau^{- s q} \vertii{\psi_1}_{L^q\left([0,T);X\right)}^q + \tau^{(1-s) q} \vertii{\psi_2}_{W^{1,q}\left([0,T);X\right)}^q\right) \frac{\d\tau}{\tau},
\]
we recognize that by Sobolev embedding $\psi_2 \in BC^0\left([0,T);X\right)$, so that any decomposition $\psi = \psi_1 + \psi_2$ with $\psi_1 \in L^q\left([0,T);X\right)$ and $\psi_2 \in W^{1,q}\left([0,T);X\right)$ induces a decomposition $\psi = \psi_1 + \psi_2$ with $\psi_1 \in L^q\left(\left[(j-1) \frac \delta 2, j \frac \delta 2\right);X\right)$ and $\psi_2 \in W^{1,q}\left(\left[(j-1) \frac \delta 2, j \frac \delta 2\right);X\right)$ for all $j \in \left\{1,\ldots,2N\right\}$. Hence, we can conclude that
\begin{align*}
&\vertii{\psi}_{B^{s,q}_q\left([0,T);X\right)}^q \\
& \quad \le C \sum_{j = 1}^{2N} \int_0^\infty \inf_{\psi = \psi_1 + \psi_2} \left(\tau^{- s q} \vertii{\psi_1}_{L^q\left(\left[(j-1) \frac \delta 2, j \frac \delta 2\right);X\right)}^q + \tau^{(1-s) q} \vertii{\psi_2}_{W^{1,q}\left(\left[(j-1) \frac \delta 2, j \frac \delta 2\right);X\right)}^q\right) \frac{\d\tau}{\tau} \\
& \quad \sim_q \sum_{j = 1}^{2N} \vertii{\psi}_{B^{s,q}_q\left(\left[(j-1) \frac \delta 2, j \frac \delta 2\right);X\right)}^q.
\end{align*}
This implies $\vertii{\phi_\eps - \phi_{\eps'}}_{B^{s,q}_q\left([0,T);X\right)} \to 0$ as $\eps, \eps' \searrow 0$, so that $\phi \in B^{s,q}_q\left([0,T);X\right)$ and \eqref{sum_norms} holds true.
\end{proof}
\bibliography{gess_gnann_stfe_final} 

\begin{thebibliography}{10}

\bibitem{Amann2000}
Herbert Amann.
\newblock Compact embeddings of vector-valued {S}obolev and {B}esov spaces.
\newblock volume 35(55), pages 161--177. 2000.
\newblock Dedicated to the memory of Branko Najman.

\bibitem{Amann2019}
Herbert Amann.
\newblock {\em Linear and quasilinear parabolic problems. {V}ol. {II}}, volume
  106 of {\em Monographs in Mathematics}.
\newblock Birkh\"{a}user/Springer, Cham, 2019.
\newblock Function spaces.

\bibitem{BensoussanGlowinskiRascanu1990}
Alain Bensoussan, Roland Glowinski, and Aurel R\u{a}\c{s}canu.
\newblock Approximation of the {Z}akai equation by the splitting up method.
\newblock {\em SIAM J. Control Optim.}, 28(6):1420--1431, 1990.

\bibitem{BerettaBertschDalPasso1995}
Elena Beretta, Michiel Bertsch, and Roberta Dal~Passo.
\newblock Nonnegative solutions of a fourth-order nonlinear degenerate
  parabolic equation.
\newblock {\em Arch. Rational Mech. Anal.}, 129(2):175--200, 1995.

\bibitem{BergLofstrom1976}
J\"{o}ran Bergh and J\"{o}rgen L\"{o}fstr\"{o}m.
\newblock {\em Interpolation spaces. {A}n introduction}.
\newblock Springer-Verlag, Berlin-New York, 1976.
\newblock Grundlehren der Mathematischen Wissenschaften, No. 223.

\bibitem{BernisFriedman1990}
Francisco Bernis and Avner Friedman.
\newblock Higher order nonlinear degenerate parabolic equations.
\newblock {\em J. Differential Equations}, 83(1):179--206, 1990.

\bibitem{BernoffWitelski2002}
Andrew~J. Bernoff and Thomas~P. Witelski.
\newblock Linear stability of source-type similarity solutions of the thin film
  equation.
\newblock {\em Appl. Math. Lett.}, 15(5):599--606, 2002.

\bibitem{BertozziPugh1996}
A.~L. Bertozzi and M.~Pugh.
\newblock The lubrication approximation for thin viscous films: regularity and
  long-time behavior of weak solutions.
\newblock {\em Comm. Pure Appl. Math.}, 49(2):85--123, 1996.

\bibitem{BringmannGiacomelliKnuepferOtto2016}
Bj\"{o}rn Bringmann, Lorenzo Giacomelli, Hans Kn\"{u}pfer, and Felix Otto.
\newblock Corrigendum to ``{S}mooth zero-contact-angle solutions to a thin-film
  equation around the steady state'' [{J}. {D}ifferential {E}quations 245
  (2008) 1454--1506].
\newblock {\em J. Differential Equations}, 261(2):1622--1635, 2016.

\bibitem{CarlenUlusoy2007}
Eric~A. Carlen and S\"{u}leyman Ulusoy.
\newblock Asymptotic equipartition and long time behavior of solutions of a
  thin-film equation.
\newblock {\em J. Differential Equations}, 241(2):279--292, 2007.

\bibitem{CarlenUlusoy2014}
Eric~A. Carlen and S\"{u}leyman Ulusoy.
\newblock Localization, smoothness, and convergence to equilibrium for a thin
  film equation.
\newblock {\em Discrete Contin. Dyn. Syst.}, 34(11):4537--4553, 2014.

\bibitem{CarrilloToscani2002}
Jos\'e~A. Carrillo and Giuseppe Toscani.
\newblock Long-time asymptotics for strong solutions of the thin film equation.
\newblock {\em Comm. Math. Phys.}, 225(3):551--571, 2002.

\bibitem{Cornalba2018}
Federico Cornalba.
\newblock A priori positivity of solutions to a non-conservative stochastic
  thin-film equation.
\newblock {\em arXiv:1811.07826}, page~16, 2018.

\bibitem{DalPassoGarckeGruen1998}
Roberta Dal~Passo, Harald Garcke, and G\"{u}nther Gr\"{u}n.
\newblock On a fourth-order degenerate parabolic equation: global entropy
  estimates, existence, and qualitative behavior of solutions.
\newblock {\em SIAM J. Math. Anal.}, 29(2):321--342, 1998.

\bibitem{DareiotisGess2018}
Konstantinos Dareiotis and Benjamin Gess.
\newblock Nonlinear diffusion equations with nonlinear gradient noise.
\newblock {\em arXiv:1811.08356}, page~42, 2018.

\bibitem{DareiotisGess2017}
Konstantinos Dareiotis and Benjamin Gess.
\newblock Supremum estimates for degenerate, quaslinear stochastic partial
  differential equations.
\newblock {\em to appear in Ann. Henri Poincar\'e (B) Probab. Statist.
  (arXiv:1712.06655)}, pages 1--35, 2018.

\bibitem{DavidovitchMoroStone2005}
Benny Davidovitch, Esteban Moro, and Howard~A. Stone.
\newblock Spreading of viscous fluid drops on a solid substrate assisted by
  thermal fluctuations.
\newblock {\em Phys. Rev. Lett.}, 95:244505, Dec 2005.

\bibitem{Degtyarev2017}
Sergey Degtyarev.
\newblock Classical solvability of the multidimensional free boundary problem
  for the thin film equation with quadratic mobility in the case of partial
  wetting.
\newblock {\em Discrete Contin. Dyn. Syst.}, 37(7):3625--3699, 2017.

\bibitem{FischerGruen2018}
Julian Fischer and G\"unther Gr\"un.
\newblock Existence of positive solutions to stochastic thin-film equations.
\newblock {\em SIAM J. Math. Anal.}, 50(1):411--455, 2018.

\bibitem{FlandoliGatarek1995}
Franco Flandoli and Dariusz G\k{a}tarek.
\newblock Martingale and stationary solutions for stochastic {N}avier-{S}tokes
  equations.
\newblock {\em Probab. Theory Related Fields}, 102(3):367--391, 1995.

\bibitem{GGK15}
M\'{a}t\'{e} Gerencs\'{e}r, Istv\'{a}n Gy\"{o}ngy, and Nicolai Krylov.
\newblock On the solvability of degenerate stochastic partial differential
  equations in {S}obolev spaces.
\newblock {\em Stoch. Partial Differ. Equ. Anal. Comput.}, 3(1):52--83, 2015.

\bibitem{GessFehrmann2019}
Benjamin Gess and Benjamin Fehrman.
\newblock Well-posedness of nonlinear diffusion equations with nonlinear,
  conservative noise.
\newblock {\em arXiv:1712.05775}, page~57, 2019.

\bibitem{GessSouganidis2015}
Benjamin Gess and Panagiotis~E. Souganidis.
\newblock Scalar conservation laws with multiple rough fluxes.
\newblock {\em Commun. Math. Sci.}, 13(6):1569--1597, 2015.

\bibitem{GiacomelliGnannKnuefperOtto2014}
Lorenzo Giacomelli, Manuel~V. Gnann, Hans Kn\"{u}pfer, and Felix Otto.
\newblock Well-posedness for the {N}avier-slip thin-film equation in the case
  of complete wetting.
\newblock {\em J. Differential Equations}, 257(1):15--81, 2014.

\bibitem{GiacomelliKnuepfer2010}
Lorenzo Giacomelli and Hans Kn\"{u}pfer.
\newblock A free boundary problem of fourth order: classical solutions in
  weighted {H}\"{o}lder spaces.
\newblock {\em Comm. Partial Differential Equations}, 35(11):2059--2091, 2010.

\bibitem{GiacomelliKnuepferOtto2008}
Lorenzo Giacomelli, Hans Kn\"{u}pfer, and Felix Otto.
\newblock Smooth zero-contact-angle solutions to a thin-film equation around
  the steady state.
\newblock {\em J. Differential Equations}, 245(6):1454--1506, 2008.

\bibitem{Gnann2015}
Manuel~V. Gnann.
\newblock Well-posedness and self-similar asymptotics for a thin-film equation.
\newblock {\em SIAM J. Math. Anal.}, 47(4):2868--2902, 2015.

\bibitem{Gnann2016}
Manuel~V. Gnann.
\newblock On the regularity for the {N}avier-slip thin-film equation in the
  perfect wetting regime.
\newblock {\em Arch. Ration. Mech. Anal.}, 222(3):1285--1337, 2016.

\bibitem{GnannPetrache2018}
Manuel~V. Gnann and Mircea Petrache.
\newblock The {N}avier-slip thin-film equation for 3{D} fluid films: existence
  and uniqueness.
\newblock {\em J. Differential Equations}, 265(11):5832--5958, 2018.

\bibitem{Govindan2015}
T.~E. Govindan.
\newblock On {T}rotter-{K}ato approximations of semilinear stochastic evolution
  equations in infinite dimensions.
\newblock {\em Statist. Probab. Lett.}, 96:299--306, 2015.

\bibitem{Grisvard1966}
Pierre Grisvard.
\newblock Commutativit\'{e} de deux foncteurs d'interpolation et applications.
\newblock {\em J. Math. Pures Appl. (9)}, 45:207--290, 1966.

\bibitem{Gruen2004}
G\"{u}nther Gr\"{u}n.
\newblock Droplet spreading under weak slippage---existence for the {C}auchy
  problem.
\newblock {\em Comm. Partial Differential Equations}, 29(11-12):1697--1744,
  2004.

\bibitem{GruenMeckeRauscher2006}
G\"unther Gr\"un, Klaus Mecke, and Markus Rauscher.
\newblock Thin-film flow influenced by thermal noise.
\newblock {\em J. Stat. Phys.}, 122(6):1261--1291, 2006.

\bibitem{GK03}
Istv\'{a}n Gy\"{o}ngy and Nicolai Krylov.
\newblock On the splitting-up method and stochastic partial differential
  equations.
\newblock {\em Ann. Probab.}, 31(2):564--591, 2003.

\bibitem{Hofmanova2013}
Martina Hofmanov\'{a}.
\newblock Degenerate parabolic stochastic partial differential equations.
\newblock {\em Stochastic Process. Appl.}, 123(12):4294--4336, 2013.

\bibitem{Hornung2019}
Luca Hornung.
\newblock Quasilinear {P}arabolic {S}tochastic {E}volution {E}quations {V}ia
  {M}aximal {$L^p$}-{R}egularity.
\newblock {\em Potential Anal.}, 50(2):279--326, 2019.

\bibitem{HytonenVanNeervenVeraarWeis2016}
Tuomas Hyt\"{o}nen, Jan van Neerven, Mark Veraar, and Lutz Weis.
\newblock {\em Analysis in {B}anach spaces. {V}ol. {I}. {M}artingales and
  {L}ittlewood-{P}aley theory}, volume~63 of {\em Ergebnisse der Mathematik und
  ihrer Grenzgebiete. 3. Folge. A Series of Modern Surveys in Mathematics
  [Results in Mathematics and Related Areas. 3rd Series. A Series of Modern
  Surveys in Mathematics]}.
\newblock Springer, Cham, 2016.

\bibitem{Jakubowski1997}
A.~Jakubowski.
\newblock The almost sure {S}korokhod representation for subsequences in
  nonmetric spaces.
\newblock {\em Teor. Veroyatnost. i Primenen.}, 42(1):209--216, 1997.

\bibitem{John2015}
Dominik John.
\newblock On uniqueness of weak solutions for the thin-film equation.
\newblock {\em J. Differential Equations}, 259(8):4122--4171, 2015.

\bibitem{Klenke2008}
Achim Klenke.
\newblock {\em Probability theory}.
\newblock Universitext. Springer-Verlag London, Ltd., London, 2008.
\newblock A comprehensive course, Translated from the 2006 German original.

\bibitem{Knuepfer2011}
Hans Kn\"{u}pfer.
\newblock Well-posedness for the {N}avier slip thin-film equation in the case
  of partial wetting.
\newblock {\em Comm. Pure Appl. Math.}, 64(9):1263--1296, 2011.

\bibitem{Knuepfer2015}
Hans Kn\"{u}pfer.
\newblock Well-posedness for a class of thin-film equations with general
  mobility in the regime of partial wetting.
\newblock {\em Arch. Ration. Mech. Anal.}, 218(2):1083--1130, 2015.

\bibitem{Knuepfer2019_err}
Hans Kn\"{u}pfer.
\newblock Erratum to: Well-posedness for a class of thin-film equations with
  general mobility in the regime of partial wetting.
\newblock {\em in preparation}, 2019.

\bibitem{KR82}
N.~V. Krylov and B.~L. Rozovski\u{\i}.
\newblock Characteristics of second-order degenerate parabolic {I}t\^{o}
  equations.
\newblock {\em Trudy Sem. Petrovsk.}, (8):153--168, 1982.

\bibitem{Krylov2013}
Nikolai~V. Krylov.
\newblock A relatively short proof of {I}t{\^o}'s formula for {SPDE}s and its
  applications.
\newblock {\em Stochastic Partial Differential Equations: Analysis and
  Computations}, 1(1):152--174, 2013.

\bibitem{KrylovRozovskii1979}
Nikolai~V. Krylov and Boris~L. Rozovskii.
\newblock Stochastic evolution equations.
\newblock {\em Itogi Nauki i Tekhniki. Seriya" Sovremennye Problemy Matematiki.
  Noveishie Dostizheniya"}, 14:71--146, 1979.

\bibitem{LionsPeetre1964}
J.-L. Lions and J.~Peetre.
\newblock Sur une classe d'espaces d'interpolation.
\newblock {\em Inst. Hautes \'{E}tudes Sci. Publ. Math.}, (19):5--68, 1964.

\bibitem{LiuRoeckner2010}
Wei Liu and Michael R\"{o}ckner.
\newblock S{PDE} in {H}ilbert space with locally monotone coefficients.
\newblock {\em J. Funct. Anal.}, 259(11):2902--2922, 2010.

\bibitem{LiuRoeckner2015}
Wei Liu and Michael R\"{o}ckner.
\newblock {\em Stochastic partial differential equations: an introduction}.
\newblock Universitext. Springer, Cham, 2015.

\bibitem{MatthesMcCannSavare2009}
Daniel Matthes, Robert~J. McCann, and Giuseppe Savar\'{e}.
\newblock A family of nonlinear fourth order equations of gradient flow type.
\newblock {\em Comm. Partial Differential Equations}, 34(10-12):1352--1397,
  2009.

\bibitem{McCannSeis2015}
Robert~J. McCann and Christian Seis.
\newblock The spectrum of a family of fourth-order nonlinear diffusions near
  the global attractor.
\newblock {\em Comm. Partial Differential Equations}, 40(2):191--218, 2015.

\bibitem{SchmeisserTriebel1987}
Hans-J\"{u}rgen Schmeisser and Hans Triebel.
\newblock {\em Topics in {F}ourier analysis and function spaces}.
\newblock A Wiley-Interscience Publication. John Wiley \& Sons, Ltd.,
  Chichester, 1987.

\bibitem{Seis2018}
Christian Seis.
\newblock The thin-film equation close to self-similarity.
\newblock {\em Anal. PDE}, 11(5):1303--1342, 2018.

\bibitem{KaratzasShreve1991}
Steven~E. Shreve and Ioannis Karatzas.
\newblock Brownian motion and stochastic calculus.
\newblock {\em New York, Berlin, Heidelberg, London, Paris, Tokyo}, 1991.

\bibitem{Skorokhod1955}
Anatol\={i}\u{i}~V. Skorokhod.
\newblock On the limiting transition from a sequence of sums of independent
  random quantities to a homogeneous random process with independent
  increments.
\newblock {\em Dokl. Akad. Nauk SSSR (N.S.)}, 104:364--367, 1955.

\bibitem{Triebel1978}
Hans Triebel.
\newblock {\em Interpolation theory, function spaces, differential operators},
  volume~18 of {\em North-Holland Mathematical Library}.
\newblock North-Holland Publishing Co., Amsterdam-New York, 1978.

\bibitem{VanNeervenVeraarWeis2012}
Jan van Neerven, Mark Veraar, and Lutz Weis.
\newblock Maximal {$L^p$}-regularity for stochastic evolution equations.
\newblock {\em SIAM J. Math. Anal.}, 44(3):1372--1414, 2012.

\bibitem{VanNeervenVeraarWeis2012_2}
Jan van Neerven, Mark Veraar, and Lutz Weis.
\newblock Stochastic maximal {$L^p$}-regularity.
\newblock {\em Ann. Probab.}, 40(2):788--812, 2012.

\end{thebibliography}
\bibliographystyle{plain} 

\end{document}